\date{}
\renewcommand{\uppercasenonmath}[1]{}
\numberwithin{equation}{section} \theoremstyle{plain}
\newtheorem{theorem}{Theorem}[section]
\newtheorem{corollary}[theorem]{Corollary}
\newtheorem{lemma}[theorem]{Lemma}
\newtheorem{proposition}[theorem]{Proposition}
\theoremstyle{definition}
\newtheorem{definition}[theorem]{Definition}
\newtheorem{example}[theorem]{Example}
\newtheorem{remark}[theorem]{Remark}
\newtheorem*{ack*}{ACKNOWLEDGEMENTS}
\newcommand{\oo}{\otimes}
\newcommand{\ds}{\oplus}
\newcommand{\pf}{\noindent\begin {proof}}
\newcommand{\epf}{\end{proof}}
\newcommand{\ra}{\rightarrow}
\newcommand{\Hom}{\mbox{\rm Hom}}
\newcommand{\Tor}{\mbox{\rm Tor}}
\newcommand{\im}{\mbox{\rm im}}
\newcommand{\ke}{\mbox{\rm ker}}
\newcommand{\DMod}{\Delta\text{-}\mathrm{Mod}}
\newcommand{\AMod}{A\text{-}\mathrm{Mod}}
\newcommand{\BMod}{B\text{-}\mathrm{Mod}}
\newcommand{\ModD}{\mathrm{Mod}\text{-}\Delta}
\newcommand{\ModR}{\mathrm{Mod}\text{-}R}
\newcommand{\RMod}{R\text{-}\mathrm{Mod}}
\begin{document}
\begin{center}
{
{\bf\large How to construct Gorenstein projective modules relative to complete duality pairs over Morita  rings}\\

\vspace{0.5cm}   Yajun Ma, Jiafeng L${\rm\ddot{u}}$, Huanhuan Li and
Jiangsheng Hu}\\
\end{center}

 $$\bf  Abstract$$
\leftskip0truemm \rightskip0truemm \noindent Let $\Delta =\left(\begin{smallmatrix}  A & {_AN_B}\\  {_BM_A} & B \\\end{smallmatrix}\right)$ be a Morita ring with $M\oo_{A}N=0=N\oo_{B}M$.
 We first study how to construct
(complete) duality pairs of $\Delta$-modules using (complete) duality pairs of $A$-modules and $B$-modules, generalizing the result of Mao (Comm. Algebra, 2020, 12: 5296--5310) about the duality pairs over a triangular matrix ring. Moreover, we construct Gorenstein projective modules relative to complete duality pairs of $\Delta$-modules. Finally, we give an application to Ding projective modules.
\leftskip10truemm \rightskip10truemm \noindent
\\[2mm]
{\bf Keywords:} Duality pair; Morita ring; Gorenstein projective object.\\
{\bf 2020 Mathematics Subject Classification:} 18G80, 18G25, 16E30.

\leftskip0truemm \rightskip0truemm
\section { \bf Introduction}
The notion of duality pairs was introduced by Holm and J{\o}gensen in \cite{HP}, which is very useful in relative homological algebra because duality pairs are very closely related to purity, existences of covers and envelopes, and to complete cotorsion pairs. Recently, in order to show how Gorenstein homological algebra can be done with respect to a
duality pair, the notion of a complete duality pair was introduced by Gillespie in \cite{Gillespie}. For a given complete duality pair $(\mathcal{L},\mathcal{A})$, he introduced the notion of Gorenstein $(\mathcal{L},\mathcal{A})$-projective modules (see \cite[Definition 4.2]{Gillespie} or Definition \ref{definition:4.1} below). Indeed, if $R$ is a commutative Noetherian ring of finite Krull dimension, then this definition, applied to the flat-injective duality pair, agrees with the usual definition of Gorenstein projective modules studied by Enochs and many other authors. See \cite{EJ} for the basic theory of Gorenstein projective modules.

One of interesting topics in Gorenstein homological algebra is to determine Gorenstein projective modules over the triangular matrix ring
$\Delta =\left(\begin{smallmatrix}  A & {_AN_B}\\  0 & B \\\end{smallmatrix}\right)$ under some conditions on the bimodule $_AN_B$. See \cite{LZ,XZ12,zh13,LZHZ} for instance. A natural extension of triangular matrix rings is the class of Morita rings. Recall that
Morita rings are $2\times2$ matrix rings associated to Morita contexts \cite{B,C}. For more recent results on Morita rings, we refer to \cite{GP,Gao,Gao1,YY}.  A particular case of interest is the Morita ring $\Delta =\left(\begin{smallmatrix}  A & {_{A}}N_{B}\\  {_{B}}M_{A} & B \\\end{smallmatrix}\right)$ with $M\oo_{A}N=0=N\oo_{B}M$.  The reason is that the natural functors $\mathrm{T}_{A}$ and $\mathrm{H}_{A}$ (resp., $\mathrm{T}_{B}$ and $\mathrm{H}_{B}$) defined by Green-Psaroudakis in \cite{GP} preserve the left half of duality pairs from the category of left $A$-modules (resp., left $B$-modules) to the category of left $\Delta$-modules (see Lemma \ref{lem2} and Theorem \ref{thm1}).
The main problem considered in this paper is:

\textbf{Problem.} Construct Gorenstein projective modules relative to complete duality pairs over the Morita ring $\Delta =\left(\begin{smallmatrix}  A & {_{A}}N_{B}\\  {_{B}}M_{A} & B \\\end{smallmatrix}\right)$ with $M\oo_{A}N=0=N\oo_{B}M$.

In order to solve this problem, we first give a method to construct duality pairs over the Morita ring $\Delta$. To this end, for any class $\mathcal{C}_{1}$ (resp., $\mathcal{C}_{2}$) of left (resp., right) $A$-modules and any class $\mathcal{D}_{1}$ (resp., $\mathcal{D}_{2}$) of left (resp., right) $B$-modules, we denote by $\mathfrak{B}^{\mathcal{C}_{1}}_{\mathcal{D}_{1}}$ the class of left $\Delta$-modules $(X,Y,f,g)$
such that $f:M\oo_{A}X\rightarrow Y$ and $g:N\oo_{B}Y\rightarrow X$ are monomorphisms, $X/\im(g)\in \mathcal{C}_{1}$ and $Y/\im(f)\in\mathcal{D}_{1},$ and by $\mathfrak{J}_{\mathcal{C}_{2},\mathcal{D}_{2}}$ the class of right $\Delta$-modules $(X,Y,f,g)$ such that $\tilde{f}:X\rightarrow \Hom_{B}(N,Y)$ and $\tilde{g}:Y\rightarrow \Hom_{A}(M,X)$ are epimorphisms, $\ke \tilde{f}\in \mathcal{C}_{2}$ and $\ke \tilde{g}\in \mathcal{D}_{2},$ where $\tilde{f}(x)(n)=f(x\oo n)$ for $f\in \Hom_{B}(X\oo_{A}N,Y), x\in X, n\in N$ and $\tilde{g}(y)(m)=g(y\oo m)$ for $g\in \Hom_{A}(Y\oo_{B}M,X),y\in Y, m\in M.$
  We have the following theorem which is contained in Theorem \ref{thm1} and Corollary \ref{complete} below.

\begin{theorem}\label{thm1'} Suppose that $\Delta =\left(\begin{smallmatrix}  A & {_{A}}N_{B}\\  {_{B}}M_{A} & B \\\end{smallmatrix}\right)$ is a Morita ring with $M\oo_{A}N=0=N\oo_{B}M$. Let $\mathcal{C}_{1}$ (resp., $\mathcal{C}_{2}$) be a class of left (resp., right) $A$-modules and $\mathcal{D}_{1}$ (resp., $\mathcal{D}_{2}$) be a class of left (resp., right) $B$-modules.
\begin{enumerate}
 \item $(\mathcal{C}_{1},\mathcal{C}_{2})$ and $(\mathcal{D}_{1},\mathcal{D}_{2})$ are duality pairs if and only if $(\mathfrak{B}^{\mathcal{C}_{1}}_{\mathcal{D}_{1}},
\mathfrak{J}_{{\mathcal{C}_{2}}, {\mathcal{D}_{2}}})$ is a duality pair.

 \item If $N_{B}$ and $M_{A}$ are finitely generated projective modules, then $(\mathcal{C}_{1},\mathcal{C}_{2})$ and $(\mathcal{D}_{1},\mathcal{D}_{2})$ are complete duality pairs if and only if $(\mathfrak{B}^{\mathcal{C}_{1}}_{\mathcal{D}_{1}},
\mathfrak{J}_{{\mathcal{C}_{2}},{\mathcal{D}_{2}}})$ is a complete duality pair.
\end{enumerate}

\end{theorem}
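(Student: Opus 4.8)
The plan is to recall the definition of a duality pair: a pair $(\mathcal{L},\mathcal{A})$ of classes of modules (on the relevant side) is a \emph{duality pair} if (i) $L\in\mathcal{L}$ iff its character module $L^{+}=\Hom_{\mathbb{Z}}(L,\mathbb{Q}/\mathbb{Z})$ lies in $\mathcal{A}$, and (ii) $\mathcal{A}$ is closed under direct summands and finite direct sums; it is \emph{complete} (or perfect) if moreover $\mathcal{L}$ is closed under direct sums, extensions and contains the projectives (the precise axioms are those of \cite{HP,Gillespie}, which I will cite). For part (1), everything rests on a single computation: the character module of a left $\Delta$-module $(X,Y,f,g)$, computed via the standard adjunctions, is the right $\Delta$-module $(X^{+},Y^{+},\ldots)$ whose structure maps are the transposes $\widetilde{f^{+}},\widetilde{g^{+}}$ of $f,g$. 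Since $(-)^{+}$ is exact and faithful, $f$ is a monomorphism iff $\widetilde{f^{+}}$ is an epimorphism, and by the same token $\bigl(X/\im(g)\bigr)^{+}\cong \ke\bigl(\widetilde{g^{+}}\bigr)$ and $\bigl(Y/\im(f)\bigr)^{+}\cong \ke\bigl(\widetilde{f^{+}}\bigr)$. Here is where the hypotheses $M\oo_{A}N=0=N\oo_{B}M$ are used — they guarantee that $(X,Y,f,g)\in\mathfrak{B}^{\mathcal{C}_1}_{\mathcal{D}_1}$ really does force $f,g$ to be \emph{mono} with the stated cokernels, and dually for $\mathfrak{J}$, so that the two classes are genuinely exchanged by $(-)^{+}$.

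With that dictionary in hand I would argue as follows. Assume $(\mathcal{C}_1,\mathcal{C}_2)$ and $(\mathcal{D}_1,\mathcal{D}_2)$ are duality pairs. Given $(X,Y,f,g)\in\mathfrak{B}^{\mathcal{C}_1}_{\mathcal{D}_1}$, the character module has epic transposes and kernels $\ke\widetilde{g^{+}}\cong(X/\im g)^{+}\in\mathcal{C}_2$ and $\ke\widetilde{f^{+}}\cong(Y/\im f)^{+}\in\mathcal{D}_2$ (using condition (i) for the two given pairs), hence lies in $\mathfrak{J}_{\mathcal{C}_2,\mathcal{D}_2}$; conversely if the character module lies in $\mathfrak{J}_{\mathcal{C}_2,\mathcal{D}_2}$ then running the same isomorphisms backwards together with the ``only if'' direction of condition (i) for $(\mathcal{C}_1,\mathcal{C}_2)$ and $(\mathcal{D}_1,\mathcal{D}_2)$ — plus the fact that $f,g$ mono is detected on characters — puts $(X,Y,f,g)$ in $\mathfrak{B}^{\mathcal{C}_1}_{\mathcal{D}_1}$. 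Closure of $\mathfrak{J}_{\mathcal{C}_2,\mathcal{D}_2}$ under summands and finite sums is immediate from componentwise closure of $\mathcal{C}_2,\mathcal{D}_2$ together with exactness of $\Hom_B(N,-)$, $\Hom_A(M,-)$ on the relevant sequences. For the reverse implication, one recovers the pairs of $A$- and $B$-modules by restricting along the exact functors $\mathrm{T}_A,\mathrm{T}_B$ (and their right adjoints) of \cite{GP}: e.g. $C\in\mathcal{C}_1$ iff $\mathrm{T}_A(C)=(C,0,0,0)\in\mathfrak{B}^{\mathcal{C}_1}_{\mathcal{D}_1}$, so the ``$\Leftarrow$'' direction of (1) follows by testing the hypothesis on these distinguished $\Delta$-modules. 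This is exactly the point where the functorial setup already recorded in Lemma \ref{lem2} and the statement of Theorem \ref{thm1} does the work, so I would simply invoke Theorem \ref{thm1}.

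For part (2), what remains beyond (1) is to match the ``completeness'' axioms, i.e. the cotorsion/thickness conditions on the \emph{left} classes $\mathfrak{B}^{\mathcal{C}_1}_{\mathcal{D}_1}$ versus $\mathcal{C}_1,\mathcal{D}_1$. The hypothesis that $N_B$ and $M_A$ are finitely generated projective is what makes $\Hom_B(N,-)$ and $\Hom_A(M,-)$ commute with direct sums and keeps $N\oo_B-$, $M\oo_A-$ exact, so that an extension $0\to(X',Y',\ldots)\to(X,Y,\ldots)\to(X'',Y'',\ldots)\to0$ of $\Delta$-modules with outer terms in $\mathfrak{B}^{\mathcal{C}_1}_{\mathcal{D}_1}$ yields (snake lemma, componentwise) a short exact sequence of the ``$X/\im g$'' pieces and likewise of the ``$Y/\im f$'' pieces, reducing extension-closure of $\mathfrak{B}^{\mathcal{C}_1}_{\mathcal{D}_1}$ to that of $\mathcal{C}_1$ and $\mathcal{D}_1$; closure under direct sums is handled the same way; and the projective $\Delta$-modules, which under the fin.\ gen.\ projective hypothesis are precisely the modules $\mathrm{T}_A(P)\oplus\mathrm{T}_B(Q)$ with $P,Q$ projective, lie in $\mathfrak{B}^{\mathcal{C}_1}_{\mathcal{D}_1}$ as soon as the projectives lie in $\mathcal{C}_1$ and $\mathcal{D}_1$. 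Running these equivalences in reverse (again by restricting along $\mathrm{T}_A,\mathrm{T}_B$) gives the converse. Since all of this is packaged in Corollary \ref{complete}, I would deduce the statement directly from Theorem \ref{thm1} and Corollary \ref{complete}. \emph{Anticipated main obstacle:} getting the character-module identification of left $\Delta$-modules completely right — tracking that the transpose of $f:M\oo_A X\to Y$ under $(-)^{+}$ is exactly the map $\widetilde{f^{+}}:Y^{+}\to\Hom_A(M,X^{+})$ appearing in the definition of $\mathfrak{J}$, with the correct identifications $(M\oo_A X)^{+}\cong\Hom_A(M,X^{+})\cong\Hom_A(X,\Hom_{\mathbb{Z}}(M,\mathbb{Q}/\mathbb{Z}))$ — and verifying that the vanishing conditions $M\oo_A N=0=N\oo_B M$ are preserved so that $\mathfrak{J}_{\mathcal{C}_2,\mathcal{D}_2}$ is in fact a class of $\Delta$-modules for the \emph{same} Morita context; the rest is diagram chasing of the kind already done in the triangular-matrix case of Mao.
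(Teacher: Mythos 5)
Your part (1) follows the paper's route exactly: identify $(X,Y,f,g)^{+}$ with $(X^{+},Y^{+},g_{+},f_{+})$, observe that $f,g$ mono with cokernels in $\mathcal{C}_{1},\mathcal{D}_{1}$ corresponds under $(-)^{+}$ to the tilde maps being epi with kernels $(X/\im g)^{+},(Y/\im f)^{+}$ in $\mathcal{C}_{2},\mathcal{D}_{2}$, and recover the converse by testing on modules of the form $\mathrm{T}_{A}(C)$, $\mathrm{T}_{B}(D)$ and on Hom-type objects for the closure properties of $\mathcal{C}_{2},\mathcal{D}_{2}$. One slip: $\mathrm{T}_{A}(C)=(C,M\oo_{A}C,1,0)$, not $(C,0,0,0)$; the latter lies in $\mathfrak{B}^{\mathcal{C}_{1}}_{\mathcal{D}_{1}}$ only when $M\oo_{A}C=0$, since the map $M\oo_{A}C\ra 0$ must be injective. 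The correct test object is exactly Lemma \ref{lem2}(1), so this is harmless.

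In part (2), however, there is a genuine gap. A complete duality pair in the sense of \cite{Gillespie} is a \emph{symmetric} duality pair $(\mathcal{X},\mathcal{Y})$ (both $(\mathcal{X},\mathcal{Y})$ and $(\mathcal{Y},\mathcal{X})$ are duality pairs) which is moreover perfect; your recalled definition drops the symmetry requirement, and your sketch only verifies the perfectness-type conditions (closure of $\mathfrak{B}^{\mathcal{C}_{1}}_{\mathcal{D}_{1}}$ under direct sums and extensions, and $_{\Delta}\Delta\in\mathfrak{B}^{\mathcal{C}_{1}}_{\mathcal{D}_{1}}$), which is the content of the paper's Theorem \ref{perfect}(1). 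What is never addressed is that $(\mathfrak{J}_{\mathcal{C}_{2},\mathcal{D}_{2}},\mathfrak{B}^{\mathcal{C}_{1}}_{\mathcal{D}_{1}})$ must itself be a duality pair, i.e.\ one must compute character modules of objects of $\mathfrak{J}$ and land in $\mathfrak{B}$; this is the right-module analogue of statement (4) of Theorem \ref{thm1} (see Corollary \ref{symmetric}(2)), and it is precisely here, via the isomorphism $\Hom_{A}(N,X)^{+}\cong X^{+}\oo_{A}N$ of \cite[Theorem 3.2.11]{EJ}, that finite presentation of $M_{A}$ and $N_{B}$ (hence the finitely generated projective hypothesis) is really needed. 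Your proposal instead attributes that hypothesis only to exactness of $M\oo_{A}-$, $N\oo_{B}-$ and commutation of Hom with sums, which covers the perfect half (the $\Tor_{1}$-vanishing in Theorem \ref{perfect}(1)) but not the symmetry half; note also that the description of projective $\Delta$-modules as $\mathrm{T}_{A}(P)\oplus\mathrm{T}_{B}(Q)$ needs no finiteness hypothesis (Lemma \ref{lem1}(1)). Deducing the statement by citing Corollary \ref{complete} is of course what the paper does, but as an independent argument the missing symmetry verification is the essential step that your sketch would have to supply.
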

We note that Theorem \ref{thm1'}(1) generalizes the equivalence of (1) and (2) in \cite[Theorem 2.1]{mao} to a more general case, where
the author assumes the ring $\Delta$ to be a triangular matrix ring.

Assume that $\mathcal{L}$ (resp., $\mathcal{A}$) is a class of left (resp., right) $R$-modules over a ring $R$ such that $(\mathcal{L},\mathcal{A})$ is a complete duality pair of $R$-modules. Recall from \cite[Definition 4.2]{Gillespie} that a left $R$-module $X$ is called Gorenstein $(\mathcal{L},\mathcal{A})$-projective if there exists an exact complex of projective left $R$-modules
$$\mathbb{P}:\cdots\ra P^{-1}\xrightarrow{d^{-1}} P^{0}\xrightarrow{d^{0}} P^{1}\ra\cdots$$
such that $\Hom_{R}(\mathbb{P}, L)$ is exact for any $L\in \mathcal{L}$ and that $X\cong \ker d^{0}.$ As a consequence of Theorem \ref{thm1'}, we have the following theorem which provides a solution to the above problem.

\begin{theorem}\label{Gorenstein projective module'}
Suppose that $\Delta =\left(\begin{smallmatrix}  A & {_{A}}N_{B}\\  {_{B}}M_{A} & B \\\end{smallmatrix}\right)$ is a Morita ring with $M\oo_{A}N=0=N\oo_{B}M.$
  Let $\mathcal{C}_{1}$ (resp., $\mathcal{C}_{2}$) be a class of left (resp., right) $A$-modules and $\mathcal{D}_{1}$ (resp., $\mathcal{D}_{2}$) be a class of left (resp., right) $B$-modules such that $(\mathcal{C}_{1},\mathcal{C}_{2})$ and $(\mathcal{D}_{1},\mathcal{D}_{2})$ are complete duality pairs over $A$ and $B$ respectively, and let $N_{B}$ and $M_{A}$ be finitely generated projective.

\begin{enumerate}
 \item  Assume that $N\oo_{B}D\in \mathcal{C}_{1}$ for any $D\in \mathcal{D}_{1}.$
If $X$ is a Gorenstein $(\mathcal{C}_{1},\mathcal{C}_{2})$-projective left $A$-module, then $\mathrm{T}_{A}(X)$ is a Gorenstein $(\mathfrak{B}^{\mathcal{C}_{1}}_{\mathcal{D}_{1}},
\mathfrak{J}_{{\mathcal{C}_{2}},{\mathcal{D}_{2}}})$-projective left $\Delta$-module.

 \item  Assume that $M\oo_{A}C\in \mathcal{D}_{1}$ for any $C\in \mathcal{C}_{1}.$
If $Y$ is a Gorenstein $(\mathcal{C}_{1},\mathcal{C}_{2})$-projective left $B$-module, then $\mathrm{T}_{B}(Y)$ is a Gorenstein $(\mathfrak{B}^{\mathcal{C}_{1}}_{\mathcal{D}_{1}},
\mathfrak{J}_{{\mathcal{C}_{2}},{\mathcal{D}_{2}}})$-projective left $\Delta$-module.
\end{enumerate}

\end{theorem}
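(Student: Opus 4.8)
The plan is to prove (1) in detail and obtain (2) by the symmetric argument, interchanging the roles of $A$ and $B$ (and of $M$ and $N$). Two standing facts about $\mathrm{T}_{A}$ will be used throughout. First, $\mathrm{T}_{A}$ is left adjoint to the exact functor $\DMod\ra\AMod$, $(X,Y,f,g)\mapsto X$; hence $\mathrm{T}_{A}$ sends projective left $A$-modules to projective left $\Delta$-modules, and for every left $\Delta$-module $(X',Y',f',g')$ there is a natural isomorphism $\Hom_{\Delta}(\mathrm{T}_{A}(-),(X',Y',f',g'))\cong\Hom_{A}(-,X')$. Second, since $M_{A}$ is finitely generated projective, hence flat, the functor $\mathrm{T}_{A}$ (which sends $X$ to $(X,M\oo_{A}X,\mathrm{id},0)$, the second structure map being $0$ because $N\oo_{B}M=0$) is exact; in particular it commutes with kernels.

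Since $X$ is Gorenstein $(\mathcal{C}_{1},\mathcal{C}_{2})$-projective, fix an exact complex $\mathbb{P}:\cdots\ra P^{-1}\xrightarrow{d^{-1}}P^{0}\xrightarrow{d^{0}}P^{1}\ra\cdots$ of projective left $A$-modules with $X\cong\ker d^{0}$ and with $\Hom_{A}(\mathbb{P},C)$ exact for all $C\in\mathcal{C}_{1}$. Applying $\mathrm{T}_{A}$ produces a complex $\mathrm{T}_{A}(\mathbb{P})$ of projective left $\Delta$-modules, which is exact because $\mathrm{T}_{A}$ is exact and which satisfies $\ker\mathrm{T}_{A}(d^{0})\cong\mathrm{T}_{A}(\ker d^{0})=\mathrm{T}_{A}(X)$. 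By Theorem~\ref{thm1'}(2) the pair $(\mathfrak{B}^{\mathcal{C}_{1}}_{\mathcal{D}_{1}},\mathfrak{J}_{\mathcal{C}_{2},\mathcal{D}_{2}})$ is a complete duality pair, so, recalling the definition of a Gorenstein $(\mathcal{L},\mathcal{A})$-projective module, it remains only to show that $\Hom_{\Delta}(\mathrm{T}_{A}(\mathbb{P}),\mathfrak{B})$ is exact for every $\mathfrak{B}=(X',Y',f',g')\in\mathfrak{B}^{\mathcal{C}_{1}}_{\mathcal{D}_{1}}$.

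By the adjunction isomorphism this complex is isomorphic to $\Hom_{A}(\mathbb{P},X')$, so the task is to prove that $\Hom_{A}(\mathbb{P},X')$ is exact. The point is that, although $X'$ itself need not lie in $\mathcal{C}_{1}$, it is squeezed between two members of $\mathcal{C}_{1}$. Indeed, by the definition of $\mathfrak{B}^{\mathcal{C}_{1}}_{\mathcal{D}_{1}}$ the maps $g'$ and $f'$ are monic, so there are short exact sequences $0\ra N\oo_{B}Y'\xrightarrow{g'}X'\ra X'/\im g'\ra 0$ with $X'/\im g'\in\mathcal{C}_{1}$ and $0\ra M\oo_{A}X'\xrightarrow{f'}Y'\ra Y'/\im f'\ra 0$ with $Y'/\im f'\in\mathcal{D}_{1}$. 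Applying $N\oo_{B}-$ to the second sequence and using $N\oo_{B}M=0$ gives $N\oo_{B}Y'\cong N\oo_{B}(Y'/\im f')$, which belongs to $\mathcal{C}_{1}$ by the hypothesis $N\oo_{B}D\in\mathcal{C}_{1}$ for $D\in\mathcal{D}_{1}$. Hence both ends of $0\ra N\oo_{B}Y'\ra X'\ra X'/\im g'\ra 0$ lie in $\mathcal{C}_{1}$. As each $P^{i}$ is projective, $\Hom_{A}(\mathbb{P},-)$ carries this short exact sequence to a short exact sequence of complexes whose two outer complexes are exact; the resulting long exact homology sequence forces the middle complex $\Hom_{A}(\mathbb{P},X')$ to be exact. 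This proves (1).

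For (2), the same reasoning applies with $A$ and $B$ (and $M$ and $N$) swapped: one uses that $\mathrm{T}_{B}$ is left adjoint to the projection $\DMod\ra\BMod$ and is exact (as $N_{B}$ is flat), takes a complete resolution $\mathbb{Q}$ of projective left $B$-modules witnessing that $Y$ is Gorenstein-projective over $B$ relative to $(\mathcal{D}_{1},\mathcal{D}_{2})$, and for $\mathfrak{B}=(X',Y',f',g')\in\mathfrak{B}^{\mathcal{C}_{1}}_{\mathcal{D}_{1}}$ observes, via $M\oo_{A}N=0$ and the hypothesis $M\oo_{A}C\in\mathcal{D}_{1}$ for $C\in\mathcal{C}_{1}$, that $M\oo_{A}X'\cong M\oo_{A}(X'/\im g')\in\mathcal{D}_{1}$, so that the $B$-component $Y'$ is squeezed in $0\ra M\oo_{A}X'\ra Y'\ra Y'/\im f'\ra 0$ between two members of $\mathcal{D}_{1}$; the homology argument then yields exactness of $\Hom_{B}(\mathbb{Q},Y')\cong\Hom_{\Delta}(\mathrm{T}_{B}(\mathbb{Q}),\mathfrak{B})$. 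The only step that is not pure formalism is this ``squeezing'': it is exactly where the Morita vanishing $N\oo_{B}M=0=M\oo_{A}N$ and the extra hypotheses linking $\mathcal{C}_{1}$ and $\mathcal{D}_{1}$ enter; the rest is a routine combination of adjunction with the long exact homology sequence.
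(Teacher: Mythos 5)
Your proposal is correct and follows essentially the same route as the paper's proof: transport the complete resolution by the exact, projective-preserving functor $\mathrm{T}_{A}$, use the adjunction $\Hom_{\Delta}(\mathrm{T}_{A}(\mathbb{P}),(X',Y',f',g'))\cong\Hom_{A}(\mathbb{P},X')$, and deduce exactness by squeezing $X'$ between $N\oo_{B}Y'\cong N\oo_{B}(Y'/\im f')\in\mathcal{C}_{1}$ and $X'/\im g'\in\mathcal{C}_{1}$ via $N\oo_{B}M=0$. Your symmetric treatment of (2) (reading the hypothesis there as $Y$ being Gorenstein $(\mathcal{D}_{1},\mathcal{D}_{2})$-projective, as intended) likewise matches the paper's ``similar proof'' for that part.
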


As a consequence of Theorem \ref{Gorenstein projective module'}, we provide a method to construct Ding projecive modules over the Morita ring $\Delta$ under some conditions (see Corollary \ref{ding-projective}).

The structure of this paper is organized as follows. In Section 2, we fix notations, recall some
definitions and properties of Morita rings  for later
proofs. In Section 3, we first provide a method to construct
duality pairs of $\Delta$-modules using  duality pairs of $A$-modules and $B$-modules (see Theorem \ref{thm1}), and then demonstrate that the above constructions still stand whenever the duality pairs of $A$-modules and $B$-modules are perfect or complete (see Theorem \ref{perfect} and Corollary \ref{complete}.
In Section 4, we construct Gorenstein projective modules relative to the complete duality pair over the Morita ring $\Delta$ (see Theorem \ref{Gorenstein projective module1}). This is based on the above construcuton of duality pairs over the Morita ring $\Delta$ established in Section 3. Moreover, we give an application to the construction of Ding projective modules.

\section { \bf Preliminaries}
Throughout this paper, all rings are nonzero associative ring with identity and all modules are unitary. All classes of modules are assumed to be closed under isomorphisms and contain 0.
For a ring $R$, we write $\RMod$ (resp., $\ModR$) for the category of left (resp., right) $R$-modules, and we use $_RX$ (resp., $X_R$) to denote a left (resp., right) $R$-module $X$, and the injective dimension of $_RX$ is denoted by $\mathrm{id}_RX.$
The character module $\Hom_{\mathbb{Z}}(M, \mathbb{Q}/\mathbb{Z})$ of a $R$-module $M$ is denoted by $M^{+}.$

 Let $A$ and $B$ be two rings, $_{B}M_{A}$ and $_{A}N_{B}$ bimodules with $M\oo_{A}N=0=N\oo_{B}M.$ Then $\Delta =\left(\begin{smallmatrix}  A & {_{A}}N_{B}\\  {_{B}}M_{A} & B \\\end{smallmatrix}\right)$ has a natural ring structure, where the addition is obvious and the mutiplication is given by $$\left(\begin{matrix}  a & n \\  m & b \\\end{matrix}\right)\left(\begin{matrix}  a' & n' \\  m' & b' \\\end{matrix}\right)=\left(\begin{matrix}  aa' & an'+nb' \\  ma'+bm' & bb' \\\end{matrix}\right).$$
 This is the special case of the general version of Morita rings in the sense of H. Bass \cite{B}.

In the rest of this paper, we always assume that $\Delta =\left(\begin{smallmatrix}  A & {_{A}}N_{B}\\  {_{B}}M_{A} & B \\\end{smallmatrix}\right)$ is a Morita ring with $M\oo_{A}N=0=N\oo_{B}M$.

Now we recall from \cite{eg82} the module structures over $\Delta$. Firstly, we consider the category $\mathcal{M}(\Delta)$ whose objects are tuples $(X,Y,f,g)$, where $X\in A\text{-Mod}$, $Y\in B\text{-Mod}$, $f\in \text{Hom}_{B}(M\oo_{A}X,Y)$ and $g\in \text{Hom}_{A}(N\oo_{B}Y,X)$.
 Given two objects $(X,Y,f,g), (X',Y',f',g')\in\mathcal{M}(\Delta)$, a morphism $(X,Y,f,g)\ra(X',Y',f',g')$ is a pair $(a,b)$, where $a\in \text{Hom}_{A}(X,X')$
  and $b\in \text{Hom}_{B}(Y,Y')$ such that the following diagrams commute:
\begin{align*}
 \xymatrixcolsep{0.5pc}\xymatrix{
   M\oo_{A} X \ar[d]_{1\oo a}  \ar[rrr]^{f} & & &   Y \ar[d]^{b} & \\
  M\oo_{A}X'  \ar[rrr]^{f'} & & &  Y'}
  &&
 \xymatrixcolsep{0.5pc}\xymatrix{
   N\oo_{B} Y \ar[d]_{1\oo b}  \ar[rrr]^{g} & & &   X \ar[d]^{a} & \\
  N\oo_{B}Y'  \ar[rrr]^{g'} & & &  X'.}
\end{align*}

It follows from \cite[Theorem 1.5]{eg82} that there exists an equivalence of categories $F:\mathcal{M}(\Delta)\ra \DMod$, where $F$ is defined on objects $(X,Y,f,g)$ of $\mathcal{M}(\Delta)$ as follows: $F(X,Y,f,g)=X\oplus Y$ as an abelian group, and the $\Delta$-module structure is given by
$$\left(\begin{matrix}  a & n \\  m & b \\\end{matrix}\right)\left(
                                                                    \begin{array}{c}
                                                                      x \\
                                                                      y \\
                                                                    \end{array}
                                                                  \right)
=(ax+g(n\oo y),by+f(m\oo x))$$
for any $a\in A, b\in B, n\in N,m\in M, x\in X$ and $y\in Y.$
The inverse $G:\DMod\ra\mathcal{M}(\Delta)$ of $F$ is defined by $G(Z)=(eZ,(1-e)Z,f,g)$ for any $Z\in\DMod$, where
$e=\left(
     \begin{smallmatrix}
       1 & 0 \\
       0 & 0 \\
     \end{smallmatrix}
   \right)$,
 $f:M\oo_{A}eZ\longrightarrow (1-e)Z$ and $g:N\oo_{B}(1-e)Z\longrightarrow eZ$ are the restrictions of the canonical isomorphism $\Delta\oo_{\Delta}Z\ra Z.$
More specially, $f(m\oo ez)=\left(
     \begin{smallmatrix}
       0 & 0 \\
       m & 0 \\
     \end{smallmatrix}
   \right)z$
and $g(n\oo (1-e)z)=\left(
     \begin{smallmatrix}
       0 & n \\
       0 & 0 \\
     \end{smallmatrix}
   \right)z$ for all $m\in M, n\in N$ and $z\in Z.$

 Denote by $\mathcal{N}(\Delta)$ be the category whose objects are tuples $(W,V,h,t)$, where $W\in \text{Mod-}A$, $V\in \text{Mod-}B$, $h\in \text{Hom}_{B}(W\oo_{A}N,V)$ and $t\in \text{Hom}_{A}(V\oo_{B}M,W)$. The morphisms in $\mathcal{N}(\Delta)$ are similar to those in $\mathcal{M}(\Delta)$. Moreover, there exists an equivalence of categories $G':\ModD\longrightarrow \mathcal{N}(\Delta)$ via $G'(K)=(Ke,K(1-e),\varphi,\phi)$ for any $K\in \ModD$, where
 $\varphi:Ke\oo_{A}N\longrightarrow K(1-e)$ and $\phi:K(1-e)\oo_{B}M\longrightarrow Ke$ are given by $\varphi(ke\oo n)=k\left(
     \begin{smallmatrix}
       0 & n \\
       0 & 0 \\
     \end{smallmatrix}
   \right)$
and $\phi(k(1-e)\oo m)=k\left(
     \begin{smallmatrix}
       0 & 0 \\
       m & 0 \\
     \end{smallmatrix}
   \right)$ for all $m\in M, n\in N$ and $k\in K.$

From now on, we identify left $\Delta$-modules with the objects of $\mathcal{M}(\Delta),$ and identify right $\Delta$-modules with the objects of $\mathcal{N}(\Delta).$
Given a left $\Delta$-module $(X,Y,f,g)$, we shall denote by $\widetilde{f}$ the $A$-morphism from $X$ to $\Hom_{B}(M,Y)$ given by $\widetilde{f}(x)(m)=f(m\oo x)$ for each $x\in X$ and $m\in M$, $\widetilde{g}$ the $B$-morphism from $Y$ to $\Hom_{A}(N,X)$ given by $\widetilde{g}(y)(n)=g(n\oo y)$ for each $y\in Y$ and $n\in N$.
In particular, the regular module $_{\Delta}\Delta$ corresponds to $(A,M\oo_{A}A,1,0)\ds(N\oo_{B}B,B,0,1)$.

A sequence $0\ra(X_{1},Y_{1},f_1,g_1)\xrightarrow{(a_1,b_1)} (X_{2},Y_{2},f_2,g_2)\xrightarrow{(a_2,b_2)} (X_{3},Y_{3},f_3,g_3)\ra 0$  of left $\Delta$-modules is exact if and only if $0\ra X_1\xrightarrow{a_1} X_2\xrightarrow{a_2} X_3\ra 0$ and $0\ra Y_1\xrightarrow{b_1} Y_2\xrightarrow{b_2} Y_3\ra 0$ are exact in $\AMod$ and $\BMod$ respectively.

For the Morita ring $\Delta =\left(\begin{smallmatrix}  A & {_{A}}N_{B}\\  {_{B}}M_{A} & B \\\end{smallmatrix}\right)$, we collect the following notations and facts that are used in Sections 3 and 4. Firstly, we introduce the following functors which are defined in \cite{Gao}.

(1) The functor $\mathrm{T}_{A}:\AMod\ra \DMod$ is defined by $\mathrm{T}_{A}(X)=(X,M\oo_{A}X,1,0)$ on object $X\in \AMod$, and for an $A$-morphism $a:X\ra X'$, $\mathrm{T}_{A}(a)=(a,1\oo a).$ Similarly, $\mathrm{T}_{B}:\BMod\ra \DMod$ is defined by $\mathrm{T}_{B}(Y)=(N\oo_{B}Y,Y,0,1)$ on object $Y\in \BMod$ and for a $B$-morphism $b:Y\ra Y'$, $\mathrm{T}_{B}(b)=(1\oo b,b).$

(2) The functor $\mathrm{U}_{A}:\DMod\ra \AMod$ is defined by $\mathrm{U}_{A}(X,Y,f,g)=X$ on object $(X,Y,f,g)\in \DMod$, and for a $\Delta$-morphism $(a,b):(X,Y,f,g)\ra (X',Y',f',g')$, $\mathrm{U}_{A}(a,b)=a.$ Similarly, $\mathrm{U}_{B}:\DMod\ra \BMod$ is defined by $\mathrm{U}_{B}(X,Y,f,g)=Y$ on object $(X,Y,f,g)\in \DMod$, and for a $\Delta$-morphism $(a,b):(X,Y,f,g)\ra (X',Y',f',g')$, $\mathrm{U}_{A}(a,b)=b.$

(3) The functor $\mathrm{H}_{A}:\AMod\ra \DMod$ is defined by $\mathrm{H}_{A}(X)=(X,\Hom_{A}(N,X),0,e_{X})$ on object $X\in \AMod$ and for an $A$-morphism $a:X\ra X'$, $\mathrm{H}_{A}(a)=(a,\Hom_{A}(N,a)),$ where $e_{X}:N\otimes_{B}\Hom_{A}(N,X)\ra X$ is the evaluation map.
Similarly, $\mathrm{H}_{B}:\BMod\ra \DMod$ is defined by $\mathrm{H}_{B}(Y)=(\Hom_{B}(M,Y),Y,e_{Y},0)$ on object $Y\in \BMod$, and for a $B$-morphism $b:Y\ra Y'$, $\mathrm{H}_{B}(b)=(\Hom_{B}(M,b),b),$ where $e_{X}:N\otimes_{B}\Hom_{A}(N,X)\ra X$ is the evaluation map.

\begin{lemma}\label{lem:adjoint pairs}\cite[Proposition 2.4]{Gao}
The two pairs $(\mathrm{T}_{A},\mathrm{U}_{A})$ and $(\mathrm{U}_{A},\mathrm{H}_{A})$ are adjoint pairs.
\end{lemma}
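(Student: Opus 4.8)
The plan is to prove each adjunction by producing, directly, a natural isomorphism of Hom-functors, using only the explicit description of morphisms in $\mathcal{M}(\Delta)$ (commuting pairs $(a,b)$) from Section 2 together with the classical tensor--hom adjunctions over $A$ and over $B$. The standing hypothesis $M\oo_A N=0=N\oo_B M$ will be invoked exactly to force two of the compatibility squares to be vacuous.

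First I would treat $(\mathrm{T}_{A},\mathrm{U}_{A})$. Fix $X\in\AMod$ and $(X',Y',f',g')\in\DMod$, and unravel what a $\Delta$-morphism $\mathrm{T}_A(X)=(X,M\oo_A X,1,0)\ra(X',Y',f',g')$ is: a pair $(a,b)$ with $a\in\Hom_A(X,X')$ and $b\in\Hom_B(M\oo_A X,Y')$ for which the $f$-square gives $b=f'\circ(1_M\oo a)$ and the $g$-square reads $0=g'\circ(1_N\oo b)$. The key observation is that $1_N\oo b$ has source $N\oo_B(M\oo_A X)\cong(N\oo_B M)\oo_A X=0$, so the $g$-square is automatic. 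Hence $(a,b)\mapsto a$ is the desired bijection onto $\Hom_A(X,\mathrm{U}_A(X',Y',f',g'))=\Hom_A(X,X')$, with inverse $a\mapsto(a,f'\circ(1_M\oo a))$, and naturality in both slots is immediate because this bijection is just ``delete the $Y'$-component''. Alternatively, one checks that $1_X\colon X\to\mathrm{U}_A\mathrm{T}_A X$ and $(1_{X'},f')\colon\mathrm{T}_A\mathrm{U}_A(X',Y',f',g')\to(X',Y',f',g')$ serve as unit and counit and verifies the two triangle identities.

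Next I would treat $(\mathrm{U}_{A},\mathrm{H}_{A})$ in the same spirit. Fix $(X,Y,f,g)\in\DMod$ and $X'\in\AMod$; a $\Delta$-morphism $(X,Y,f,g)\ra\mathrm{H}_A(X')=(X',\Hom_A(N,X'),0,e_{X'})$ is a pair $(a,b)$ with $a\in\Hom_A(X,X')$ and $b\in\Hom_B(Y,\Hom_A(N,X'))$ for which the $f$-square gives $b\circ f=0$ and the $g$-square gives $a\circ g=e_{X'}\circ(1_N\oo b)$. Now transpose $b$ under $\Hom_B(Y,\Hom_A(N,X'))\cong\Hom_A(N\oo_B Y,X')$ to $\bar b$ with $\bar b(n\oo y)=b(y)(n)$: the $g$-square becomes precisely $\bar b=a\circ g$, so $b$ is pinned down by $a$, while the $f$-square becomes automatic since $(b\circ f)(m\oo x)(n)=\bar b\big(n\oo f(m\oo x)\big)$ and $n\oo f(m\oo x)$ lives in $N\oo_B(M\oo_A X)=0$. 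This yields the natural bijection $\Hom_\Delta((X,Y,f,g),\mathrm{H}_A X')\cong\Hom_A(\mathrm{U}_A(X,Y,f,g),X')$, with inverse sending $a$ to $(a,b)$ where $b$ is the transpose of $a\circ g$; equivalently, $1_{X'}\colon\mathrm{U}_A\mathrm{H}_A X'\to X'$ and $(1_X,\widetilde g)\colon(X,Y,f,g)\to\mathrm{H}_A\mathrm{U}_A(X,Y,f,g)$ are counit and unit, with $\widetilde g$ the $B$-morphism recalled in Section 2.

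The one step that needs genuine care, and the only place the hypothesis $M\oo_A N=0=N\oo_B M$ is used, is the vanishing of the $g$-square in the first argument and of the $f$-square in the second; both come down to $N\oo_B M\oo_A X=0$. Everything else---the transposition under tensor--hom, naturality in each variable, and (on the unit/counit route) the triangle identities---is a routine check, so I do not anticipate a real obstacle beyond keeping straight which of $f,g$ is the identity and which is zero when passing between $\mathrm{T}_A$ and $\mathrm{H}_A$.
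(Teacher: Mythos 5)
Your argument is correct. Note, however, that the paper itself gives no proof of this lemma: it is quoted verbatim from Gao--Psaroudakis \cite[Proposition 2.4]{Gao}, where the adjunctions $(\mathrm{T}_{A},\mathrm{U}_{A})$ and $(\mathrm{U}_{A},\mathrm{H}_{A})$ are established for an arbitrary Morita context ring, with $\mathrm{T}_{A}(X)$ carrying the structure map induced by the bimodule homomorphism $N\oo_{B}M\ra A$; in that generality the compatibility squares are genuinely nontrivial and are verified using the Morita-context identities. Your route is a direct, self-contained verification specialized to the case $M\oo_{A}N=0=N\oo_{B}M$, where those squares collapse because $N\oo_{B}M\oo_{A}X=0$, and it is a perfectly valid (indeed more elementary) substitute in the setting of this paper; what it gives up is only the generality of the cited result. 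One small point of precision: the element $n\oo f(m\oo x)$ does not ``live in'' $N\oo_{B}(M\oo_{A}X)$ --- it lies in $N\oo_{B}Y$ --- but it is the image of $n\oo(m\oo x)\in N\oo_{B}(M\oo_{A}X)\cong(N\oo_{B}M)\oo_{A}X=0$ under $1_{N}\oo f$, hence is zero, which is exactly what your computation of the $f$-square (and likewise the check that $(1_X,\widetilde{g})$ is a unit) needs. With that phrasing repaired, the bijections $(a,b)\mapsto a$, their inverses $a\mapsto(a,f'\circ(1_M\oo a))$ and $a\mapsto(a,\overline{a\circ g})$, the naturality checks, and the unit/counit descriptions all go through as you state.
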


  \begin{lemma}\label{lem1}Let $\Delta =\left(\begin{smallmatrix}  A & {_{A}}N_{B}\\  {_{B}}M_{A} & B \\\end{smallmatrix}\right)$ be a Morita ring with $M\oo_{A}N=0=N\oo_{B}M$.
\begin{enumerate}
 \item \cite[Theorem 3.7.3]{KT} $(P,Q,f,g)$ is a projective left $\Delta$-module if and only if $(P,Q,f,g)=\mathrm{T}_{A}(X)\oplus\mathrm{T}_{B}(Y)=(M\oo_{A}X,1,0)\ds(N\oo_{B}Y,Y,0,1)$ for some projective left $A$-module $X$ and projective left $B$-module $Y$.

\item \cite[Corollary 3.4.7]{KT} $(P,Q,f,g)$ is an injective left $\Delta$-module if and only if $(P,Q,f,g)=\mathrm{H}_{A}(X)\oplus\mathrm{H}_{B}(Y)=(X,\Hom_{A}(N,X),0,e_{X})\ds(\Hom_{B}(M,Y),Y,e_{Y},0)$ for some injective left $A$-module $X$ and injective left $B$-module $Y$.
\end{enumerate}
\end{lemma}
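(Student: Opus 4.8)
The plan is to read the statement off from the adjunctions in Lemma \ref{lem:adjoint pairs}, the componentwise exactness of the forgetful functors $\mathrm{U}_A,\mathrm{U}_B$, and the decomposition $_{\Delta}\Delta\cong\mathrm{T}_A(A)\ds\mathrm{T}_B(B)$ recalled above; the hypothesis $M\oo_A N=0=N\oo_B M$, i.e.\ $M\oo_A(N\oo_B-)=0$ and $N\oo_B(M\oo_A-)=0$, is used at each step to kill cross terms. Part (2) is the formal dual of part (1) --- with $(\mathrm{U}_A,\mathrm{H}_A),(\mathrm{U}_B,\mathrm{H}_B)$ in place of $(\mathrm{T}_A,\mathrm{U}_A),(\mathrm{T}_B,\mathrm{U}_B)$ --- so I spell out (1) and only indicate (2).

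For the ``if'' direction of (1): exactness of a short exact sequence of $\Delta$-modules is tested on its $A$- and $B$-components, so $\mathrm{U}_A$ and $\mathrm{U}_B$ are exact; hence their left adjoints $\mathrm{T}_A,\mathrm{T}_B$ preserve projectivity, and, projectives being closed under direct sums, $\mathrm{T}_A(X)\ds\mathrm{T}_B(Y)$ is projective whenever $X\in\AMod$, $Y\in\BMod$ are projective. For ``only if'' I first extract structural information. As left adjoints, $\mathrm{T}_A,\mathrm{T}_B$ preserve coproducts, so every free $\Delta$-module is isomorphic to some $\mathrm{T}_A(F_1)\ds\mathrm{T}_B(F_2)$ with $F_1\in\AMod$, $F_2\in\BMod$ free; and, because $M\oo_A N=0=N\oo_B M$, the two structure maps of $\mathrm{T}_A(F_1)\ds\mathrm{T}_B(F_2)$ are (identified with) the canonical split monomorphisms $M\oo_A F_1\ra(M\oo_A F_1)\ds F_2$ and $N\oo_B F_2\ra F_1\ds(N\oo_B F_2)$, whose cokernels are $F_2$ and $F_1$. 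Now let $(P,Q,f,g)$ be projective and choose $(P',Q',f',g')$ with $(P,Q,f,g)\ds(P',Q',f',g')\cong\mathrm{T}_A(F_1)\ds\mathrm{T}_B(F_2)$. Since direct sums in $\mathcal{M}(\Delta)$ are componentwise, the first structure map of the left-hand side is $f\ds f'$, which is carried to the split monomorphism above by this isomorphism and hence is itself split monic; a block-diagonal split monomorphism has split monic blocks, so $f$ is a split monomorphism, and likewise $g$. Passing to cokernels, $Y:=\coker f$ is a direct summand of $F_2$ and $X:=\coker g$ of $F_1$, so $X\in\AMod$ and $Y\in\BMod$ are projective.

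It remains to recognize $(P,Q,f,g)$ as $\mathrm{T}_A(X)\ds\mathrm{T}_B(Y)$. Since $f$ is split monic, the quotient $Q\ra Y$ has a section $\tau:Y\ra Q$; since $g$ is split monic, $P\ra X$ has a section $\sigma:X\ra P$. By the adjunctions $(\mathrm{T}_A,\mathrm{U}_A),(\mathrm{T}_B,\mathrm{U}_B)$, the $A$-map $\sigma$ and the $B$-map $\tau$ correspond to $\Delta$-morphisms $\mathrm{T}_A(X)\ra(P,Q,f,g)$ and $\mathrm{T}_B(Y)\ra(P,Q,f,g)$, which combine into a $\Delta$-morphism $\theta:\mathrm{T}_A(X)\ds\mathrm{T}_B(Y)\ra(P,Q,f,g)$ whose $\mathrm{U}_A$-component is $(\sigma,\,g\circ(1\oo\tau)):X\ds(N\oo_B Y)\ra P$ and whose $\mathrm{U}_B$-component is $(f\circ(1\oo\sigma),\,\tau):(M\oo_A X)\ds Y\ra Q$. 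Using $P=\sigma(X)\ds\im g$, $Q=\tau(Y)\ds\im f$ together with $\im g\cong N\oo_B Q\cong N\oo_B Y$ and $\im f\cong M\oo_A P\cong M\oo_A X$ (all forced by $M\oo_A N=0=N\oo_B M$), both components of $\theta$ are isomorphisms, hence $\theta$ is an isomorphism of $\Delta$-modules. This proves (1).

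For (2): exactness of $\mathrm{U}_A,\mathrm{U}_B$ makes their right adjoints $\mathrm{H}_A,\mathrm{H}_B$ preserve injectivity, which gives ``if''. For ``only if'', given an injective $(P,Q,f,g)$ choose monomorphisms $P\ra E_1$, $Q\ra E_2$ into injective $A$- and $B$-modules; the unit maps assemble into a monomorphism $(P,Q,f,g)\ra\mathrm{H}_A(P)\ds\mathrm{H}_B(Q)$, which (by left exactness of $\mathrm{H}_A,\mathrm{H}_B$) composes with a monomorphism into the injective $\mathrm{H}_A(E_1)\ds\mathrm{H}_B(E_2)$, so $(P,Q,f,g)$ is a direct summand of the latter. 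The same componentwise analysis --- now applied to the maps $\widetilde f:P\ra\Hom_B(M,Q)$ and $\widetilde g:Q\ra\Hom_A(N,P)$, whose kernels in $\mathrm{H}_A(E_1)\ds\mathrm{H}_B(E_2)$ are the summands $E_1,E_2$ --- shows that $\ke\widetilde f\ra P$ and $\ke\widetilde g\ra Q$ are split monomorphisms and that $X:=\ke\widetilde f$, $Y:=\ke\widetilde g$ are injective over $A,B$; dualizing the construction of $\theta$ (with retractions $P\ra X$, $Q\ra Y$ and the adjunctions $(\mathrm{U}_A,\mathrm{H}_A),(\mathrm{U}_B,\mathrm{H}_B)$) then yields $(P,Q,f,g)\cong\mathrm{H}_A(X)\ds\mathrm{H}_B(Y)$. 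I expect the main obstacle in each part to be precisely this final step --- verifying that the tautologically constructed comparison morphism is an isomorphism --- and it is exactly here that $M\oo_A N=0=N\oo_B M$ is indispensable: it is what makes the structure maps (resp.\ their transposes $\widetilde f,\widetilde g$) split with the correct cokernels (resp.\ kernels), and what lets the two sections (resp.\ retractions) be combined without interfering.
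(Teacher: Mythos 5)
Your proof is correct, but it is worth noting that the paper itself contains no argument for this lemma: both parts are simply quoted from Krylov--Tuganbaev \cite{KT} (their descriptions of projective and injective modules over formal matrix rings), so your route is necessarily different --- a self-contained derivation of the special case $M\oo_A N=0=N\oo_B M$ from the adjunctions $(\mathrm{T}_A,\mathrm{U}_A)$, $(\mathrm{U}_A,\mathrm{H}_A)$ and their $B$-analogues, the componentwise exactness of sequences of $\Delta$-modules, and $_{\Delta}\Delta\cong\mathrm{T}_A(A)\ds\mathrm{T}_B(B)$. I checked the details and they go through: the ``if'' directions are the standard facts that left (resp.\ right) adjoints of exact functors preserve projectives (resp.\ injectives); in the ``only if'' direction your summand-of-$\mathrm{T}_A(F_1)\ds\mathrm{T}_B(F_2)$ (resp.\ summand-of-$\mathrm{H}_A(E_1)\ds\mathrm{H}_B(E_2)$) analysis correctly forces $f,g$ to be split monomorphisms with projective cokernels (resp.\ $\widetilde f,\widetilde g$ to be split epimorphisms with injective kernels); and the comparison map $\theta$ is an isomorphism because, relative to $P=\sigma(X)\ds\im(g)$ and $Q=\tau(Y)\ds\im(f)$, each component restricts to isomorphisms onto complementary summands, using $\im(g)\cong N\oo_B Y$ and $\im(f)\cong M\oo_A X$ (dually $\Hom_B(M,Q)\cong\Hom_B(M,Y)$ and $\Hom_A(N,P)\cong\Hom_A(N,X)$), which is exactly where $M\oo_A N=0=N\oo_B M$ is used; the dual verification in (2), which you only sketch, works the same way. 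What your argument buys is transparency about the role of this vanishing hypothesis --- for a general Morita ring the clean decomposition of projectives and injectives fails and one must fall back on the more elaborate description in \cite{KT} --- while the citation buys the authors brevity and a statement inherited from the general formal-matrix setting. The only cosmetic gaps are that you invoke the adjunctions $(\mathrm{T}_B,\mathrm{U}_B)$ and $(\mathrm{U}_B,\mathrm{H}_B)$, which the paper's Lemma \ref{lem:adjoint pairs} states only for the $A$-side (they hold by symmetry), and that the final ``both components are isomorphisms'' step deserves the one-line block-triangular matrix justification indicated above.
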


\begin{lemma}\label{lem:character modules}
  Let $(X,Y,f,g)$ be a left $\Delta$-module. Then the character module of $(X,Y,f,g)$ is  $(X^{+},Y^{+},g_{+},f_{+})$, where $g_{+}:X^{+}\oo_{A}N\ra Y^{+}$ is defined by $g_{+}(s\oo n)(y)=s(g(n\oo y))$ for any $s\in X^{+}, n\in N$ and $y\in Y$, $f_{+}:Y^{+}\oo_{B}M\ra X^{+}$ is defined by $f_{+}(t\oo m)(x)=t(f(m\oo x))$ for any $t\in Y^{+}, m\in M$ and $x\in X$.
\end{lemma}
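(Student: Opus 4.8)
The plan is to run the character module through the equivalence $G':\ModD\ra\mathcal{N}(\Delta)$ recalled in Section 2 and verify that the resulting object is the claimed tuple. Write $K=(X\ds Y)^{+}$ for the character module of the left $\Delta$-module $(X,Y,f,g)$; its underlying abelian group is $X^{+}\ds Y^{+}$ and its right $\Delta$-module structure is $(\psi\cdot\delta)(z)=\psi(\delta z)$ for $\psi\in K$, $\delta\in\Delta$ and $z\in X\ds Y$. Since $G'(K)=(Ke,K(1-e),\varphi,\phi)$, the task splits into identifying the abelian groups $Ke$ and $K(1-e)$ as right $A$- and $B$-modules, and then transporting the structure maps $\varphi$ and $\phi$ through those identifications.

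First I would compute $Ke$ and $K(1-e)$. Using the $\Delta$-action formula from Section 2 one has $e(x,y)=(x,0)$ and $(1-e)(x,y)=(0,y)$, so for $\psi=(s,t)\in X^{+}\ds Y^{+}$ the functional $\psi\cdot e$ is $(x,y)\mapsto s(x)$ and $\psi\cdot(1-e)$ is $(x,y)\mapsto t(y)$; hence $Ke\cong X^{+}$ and $K(1-e)\cong Y^{+}$ as abelian groups. A short check using $\left(\begin{smallmatrix} a & 0\\ 0 & 0\end{smallmatrix}\right)(x,y)=(ax,0)$ and $\left(\begin{smallmatrix} 0 & 0\\ 0 & b\end{smallmatrix}\right)(x,y)=(0,by)$ shows these are isomorphisms of right $A$-modules and right $B$-modules, respectively, for the standard structures on the character modules $X^{+}$ and $Y^{+}$. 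This identification is what legitimises rewriting $Ke\oo_{A}N$ and $K(1-e)\oo_{B}M$ as $X^{+}\oo_{A}N$ and $Y^{+}\oo_{B}M$.

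Next I would transport $\varphi$ and $\phi$. For $s\in X^{+}$ (viewed in $Ke$) and $n\in N$, the definition of $G'$ gives $\varphi(s\oo n)=\psi\cdot\left(\begin{smallmatrix} 0 & n\\ 0 & 0\end{smallmatrix}\right)$ where $\psi$ restricts to $s$; since $\left(\begin{smallmatrix} 0 & n\\ 0 & 0\end{smallmatrix}\right)(x,y)=(g(n\oo y),0)$, this functional is $(x,y)\mapsto s(g(n\oo y))$, which under $K(1-e)\cong Y^{+}$ is precisely $g_{+}(s\oo n)$. Symmetrically, using $\left(\begin{smallmatrix} 0 & 0\\ m & 0\end{smallmatrix}\right)(x,y)=(0,f(m\oo x))$, the map $\phi:K(1-e)\oo_{B}M\ra Ke$ becomes $t\oo m\mapsto\big(x\mapsto t(f(m\oo x))\big)$, i.e. $f_{+}$. (Well-definedness of $g_{+}$ and $f_{+}$ on the tensor products is automatic here, since they are identified with the already well-defined $\varphi$ and $\phi$; alternatively it follows from left $A$-linearity of $g$ and left $B$-linearity of $f$.) Combining the three computations gives $G'(K)\cong(X^{+},Y^{+},g_{+},f_{+})$, as claimed.

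There is no serious obstacle here; it is essentially a bookkeeping computation. The two points that deserve attention are applying the $\Delta$-action formula involving $f$ and $g$ correctly when evaluating $e(x,y)$, $\left(\begin{smallmatrix} 0 & n\\ 0 & 0\end{smallmatrix}\right)(x,y)$ and $\left(\begin{smallmatrix} 0 & 0\\ m & 0\end{smallmatrix}\right)(x,y)$, and confirming that $Ke\cong X^{+}$ and $K(1-e)\cong Y^{+}$ are isomorphisms of right modules over $A$ and $B$ so that the passage to $\oo_{A}N$ and $\oo_{B}M$ is valid.
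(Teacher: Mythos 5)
Your proposal is correct and follows essentially the same route as the paper: pass to $(X\oplus Y)^{+}$, apply the equivalence $G'$ with the idempotent $e$, identify $Ke\cong X^{+}$ and $K(1-e)\cong Y^{+}$, and check that the transported structure maps are $g_{+}$ and $f_{+}$. The paper merely packages the same computation via explicit isomorphisms $\theta(ke)=ki_{X}$, $\rho(k(1-e))=ki_{Y}$ and two commutative squares, so there is no substantive difference.
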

\begin{proof}

Let $(X,Y,f,g)$ be a left $\Delta$-module.
Then $F(X,Y,f,g)=X\oplus Y,$ where $F:\mathcal{M}(\Delta)\ra \DMod$ is the equivalence of categories.
Then
\begin{align*}
\Hom_{\mathbb{Z}}(F(X,Y,f,g),\mathbb{Q}/\mathbb{Z})&=\Hom_{\mathbb{Z}}(X\oplus Y,\mathbb{Q}/\mathbb{Z})\\
&=(X\oplus Y)^{+}.
\end{align*}
Since $F(X,Y,f,g)$ is a left $\Delta$-module, $(X\oplus Y)^{+}$ is a right $\Delta$-module.
Let $G':\ModD\longrightarrow\mathcal{N}(\Delta)$ be the equivalence of categories.
Then $G'((X\oplus Y)^{+})=((X\oplus Y)^{+}e,(X\oplus Y)^{+}(1-e),g',f'),$ where
 $e=\left(
     \begin{smallmatrix}
       1 & 0 \\
       0 & 0 \\
     \end{smallmatrix}
   \right), g':(X\oplus Y)^{+}e\oo_{A}N\ra (X\oplus Y)^{+}(1-e)$ and  $f':(X\oplus Y)^{+}(1-e)\oo_{B}M\ra (X\oplus Y)^{+}e$ are restrictions of the canonical isomorphism $(X\oplus Y)^{+}\oo_{\Delta}\Delta\ra X\oplus Y)^{+}.$

 We claim that $((X\oplus Y)^{+}e,(X\oplus Y)^{+}(1-e),g',f')\cong (X^{+},Y^{+},g_{+},f_{+})$, where $g_{+}:X^{+}\oo_{A}N\ra Y^{+}$ is defined by $g_{+}(s\oo n)(y)=s(g(n\oo y))$ for any $s\in X^{+}, n\in N$ and $y\in Y$, $f_{+}:Y^{+}\oo_{B}M\ra X^{+}$ is defined by $f_{+}(t\oo m)(x)=t(f(m\oo x))$ for any $t\in Y^{+}, m\in M$ and $x\in X$.

 Indeed, it is easy to check that $\theta:(X\oplus Y)^{+}e\ra X^{+}$ given by $\theta(ke)=ki_{X}$ and $\rho:(X\oplus Y)^{+}(1-e)\ra Y^{+}$ given by $\rho(k(1-e))=ki_{Y}$ are isomorphisms of right $A$-modules and right $B$-modules respectively, where $i_{X}:X\ra X\oplus Y$ and $i_{Y}:Y\ra X\oplus Y$ are canonical injections.
 We need to check that the following diagrams are commutative:
 \begin{align*}
 \xymatrixcolsep{0.5pc}\xymatrix{
   (X\oplus Y)^{+}e\oo_{A}N \ar[d]_{\theta\oo 1}  \ar[rrr]^{g'} & & &   (X\oplus Y)^{+}(1-e) \ar[d]^{\rho} & \\
  X^{+}\oo_{A}N  \ar[rrr]^{g_{+}} & & &  Y^{+}}
  \xymatrixcolsep{0.5pc}\xymatrix{
   (X\oplus Y)^{+}(1-e)\oo_{B}M \ar[d]_{\rho\oo 1}  \ar[rrr]^(.6){f'} & & &   (X\oplus Y)^{+}e \ar[d]_{\theta}& \\
  Y^{+}\oo_{B}M  \ar[rrr]^{f_{+}} & & &  X^{+},}
\end{align*}
i.e., $g_{+}(\theta\oo 1)=\rho g'$ and $f_{+}(\rho\oo 1)=\theta f'.$
For any $k\in (X\oplus Y)^{+}, y\in Y$ and $n\in N$, we have
\begin{align*}
g_{+}(\theta\oo 1)(ke\oo n)(y)&=g_{+}(ki_{X}\oo n)(y)\\
&=ki_{X}(g(n\oo y))\\
&=k\left(
                                                                    \begin{smallmatrix}
                                                                      g(n\oo y) \\
                                                                      0 \\
                                                                    \end{smallmatrix}
                                                                  \right)
\end{align*}
and
\begin{align*}
\rho g'(ke\oo n)(y)&=\rho(k\left(\begin{smallmatrix}  0 & n \\  0 & 0 \\\end{smallmatrix}\right))(y)\\
&=(k\left(\begin{smallmatrix}  0 & n \\  0 & 0 \\\end{smallmatrix}\right)i_{Y})(y)\\
&=(k\left(\begin{smallmatrix}  0 & n \\  0 & 0 \\\end{smallmatrix}\right))\left(
                                                                            \begin{smallmatrix}
                                                                              0 \\
                                                                              y \\
                                                                            \end{smallmatrix}
                                                                          \right)\\
                                                                          &=k(\left(\begin{smallmatrix}  0 & n \\  0 & 0 \\\end{smallmatrix}\right)\left(
                                                                            \begin{smallmatrix}
                                                                              0 \\
                                                                              y \\
                                                                            \end{smallmatrix}
                                                                          \right))\\
                                                                          &=k\left(
                                                                    \begin{smallmatrix}
                                                                      g(n\oo y) \\
                                                                      0 \\
                                                                    \end{smallmatrix}
                                                                  \right).
\end{align*}
Thus $g_{+}(\theta\oo 1)=\rho g'.$
Similarly, we can show $f_{+}(\rho\oo 1)=\theta f'.$

 Hence $(X,Y,f,g)^{+}\cong(X^{+},Y^{+},g_{+},f_{+}).$
\end{proof}

\bigskip
\section { \bf Duality pairs over Morita rings}

In this section, we study duality pairs over the Morita ring $\Delta$. First, we recall some definitions and facts.

\begin{definition}\cite[Definition 2.1]{HP}
A duality pair of $R$-modules is a pair $(\mathcal{X}, \mathcal{Y})$, where $\mathcal{X}$ is a class of left (resp., right) $R$-modules and $\mathcal{Y}$ is a class of right (resp., left) $R$-modules, subject to the following conditions:

\begin{enumerate}
\item For an $R$-module $X$, one has $X\in \mathcal{X}$ if and only if $X^{+}\in \mathcal{Y}.$

\item $\mathcal{Y}$ is closed under direct summands and finite direct sums.
\end{enumerate}
Recall from \cite{HP} that a duality pair $(\mathcal{X},\mathcal{Y})$ of $R$-modules is called perfect if $\mathcal{X}$ contains $R$ and is closed under direct sums and extensions.
\end{definition}

\begin{remark}\label{Remark1}
Let $(\mathcal{X},\mathcal{Y})$ be a duality pair of $R$-modules. Then $\mathcal{X}$ is closed under direct summands and finite direct sums.
\end{remark}

Let $\mathcal{C}$ be a class of left $A$-modules and $\mathcal{D}$ be a class of left $B$-modules.
We introduce the following classes of left $\Delta$-modules.

(1) $\mathfrak{A}^{\mathcal{C}}_{\mathcal{D}}=$  $\{(X,Y,f,g):X\in \mathcal{C},Y\in\mathcal{D}\}$.

(2) $\mathfrak{B}^{\mathcal{C}}_{\mathcal{D}}=$  $\{(X,Y,f,g):f~\text{and}~g\text{~are~monomorphisms,}~X/\im(g)\in \mathcal{C}~ \text{and}~Y/\im(f)\in\mathcal{D}\}$.

(3) $\mathfrak{J}^{\mathcal{C}}_{\mathcal{D}}=$  $\{(X,Y,f,g):\widetilde{f}~\text{and}~\widetilde{g}\text{~are~epimorphisms,}~\ke(\widetilde{f})\in \mathcal{C}~ \text{and}~\ke(\widetilde{g})\in\mathcal{D}\}.$

 If $\mathcal{C}$ is a class of right $A$-modules and $\mathcal{D}$ is a class of right $B$-modules, then
 there are similar symbols $\mathfrak{A}_{{\mathcal{C}},{\mathcal{D}}}$, $\mathfrak{B}_{{\mathcal{C}},{\mathcal{D}}}$,
$\mathfrak{J}_{{\mathcal{C}},{\mathcal{D}}}$ for the cases of right $\Delta$-modules.

\begin{lemma}\label{lem2}
Let $\mathcal{C}_{1}$ (resp., $\mathcal{C}_{2}$) be a class of left (resp., right) $A$-modules and $\mathcal{D}_{1}$ (resp., $\mathcal{D}_{2}$) be a class of left (resp., right) $B$-modules.
Then
\begin{enumerate}

\item[(1)] $X\in \mathcal{C}_{1}$ if and only if $\mathrm{T}_{A}(X)\in \mathfrak{B}^{\mathcal{C}_{1}}_{\mathcal{D}_{1}}$.

\item[(1')]  $Y\in \mathcal{D}_{1}$ if and only if $\mathrm{T}_{B}(Y)\in \mathfrak{B}^{\mathcal{C}_{1}}_{\mathcal{D}_{1}}$.

\item[(2)] Let $X$ be a left $A$-module. Then $X^{+}\in \mathcal{C}_{2}$ if and only if $\mathrm{T}_{A}(X)^{+}\in \mathfrak{J}_{{\mathcal{C}_{2}},{\mathcal{D}_{2}}}.$

\item[(2')] Let $Y$ be a left $B$-module. Then $Y^{+}\in \mathcal{D}_{2}$ if and only if $\mathrm{T}_{B}(Y)^{+}\in \mathfrak{J}_{{\mathcal{C}_{2}},{\mathcal{D}_{2}}}.$

\item[(3)] $X\in \mathcal{C}_{1}$ if and only if $\mathrm{H}_{A}(X)\in \mathfrak{J}^{\mathcal{C}_{1}}_{\mathcal{D}_{1}}.$

\item[(3')] $Y\in \mathcal{D}_{1}$ if and only if $\mathrm{H}_{B}(Y)\in \mathfrak{J}^{\mathcal{C}_{1}}_{\mathcal{D}_{1}}.$

\item[(4)] Let $X$ be a left $A$-module. If $_{A}N$ is finite presented, then $X^{+}\in \mathcal{C}_{2}$ if and only if $\mathrm{H}_{A}(X)^{+}\in \mathfrak{B}_{{\mathcal{C}_{2}},{\mathcal{D}_{2}}}.$

\item[(4')] Let $Y$ be a left $B$-module. If $_{B}M$ is finitely generated, then $Y^{+}\in \mathcal{D}_{2}$ if and only if $\mathrm{H}_{B}(Y)^{+}\in \mathfrak{B}_{{\mathcal{C}_{2}},{\mathcal{D}_{2}}}.$
\end{enumerate}
\end{lemma}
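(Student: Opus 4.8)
\textbf{Proof proposal for Lemma \ref{lem2}.}

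The plan is to verify each of the eight equivalences by unwinding the definitions of the functors $\mathrm{T}_A,\mathrm{T}_B,\mathrm{H}_A,\mathrm{H}_B$ and of the classes $\mathfrak{B}^{\mathcal{C}_1}_{\mathcal{D}_1}$, $\mathfrak{J}^{\mathcal{C}_1}_{\mathcal{D}_1}$, $\mathfrak{J}_{\mathcal{C}_2,\mathcal{D}_2}$, $\mathfrak{B}_{\mathcal{C}_2,\mathcal{D}_2}$, and by keeping careful track of character modules via Lemma \ref{lem:character modules}. For (1): $\mathrm{T}_A(X)=(X,M\oo_A X,1,0)$, so the structure map $f=1_{M\oo_A X}$ is a monomorphism with $Y/\im(f)=0\in\mathcal{D}_1$ (every class contains $0$), and the structure map $g=0\colon N\oo_B(M\oo_A X)\to X$ has source $N\oo_B M\oo_A X=0$ (since $N\oo_B M=0$), hence is trivially a monomorphism with $X/\im(g)=X$. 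Thus $\mathrm{T}_A(X)\in\mathfrak{B}^{\mathcal{C}_1}_{\mathcal{D}_1}$ iff $X\in\mathcal{C}_1$. The argument for (1') is symmetric, using $\mathrm{T}_B(Y)=(N\oo_B Y,Y,0,1)$ and $M\oo_A N=0$.

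For (2) and (2'): by Lemma \ref{lem:character modules}, $\mathrm{T}_A(X)^{+}=(X,M\oo_A X,1,0)^{+}=(X^{+},(M\oo_A X)^{+},(1)_{+},(0)_{+})$ as a right $\Delta$-module, i.e. an object of $\mathcal{N}(\Delta)$; I would compute the two associated adjuncts $\widetilde{(1)_+}$ and $\widetilde{(0)_+}$. One of them, coming from the identity, will be an isomorphism (hence an epimorphism) onto $\Hom_A(M,X^+)\cong(M\oo_A X)^{+}$ with kernel $0\in\mathcal{D}_2$; the other, coming from the zero map, is the zero map $(M\oo_A X)^{+}\to\Hom_B(N,X^{+})$, and since $N\oo_B M=0$ forces $\Hom_B(N,\Hom_A(M,X^+))$... more directly, $\Hom_B(N,(M\oo_A X)^+)\cong(N\oo_B M\oo_A X)^+=0$, so the zero map is vacuously an epimorphism with kernel $(M\oo_A X)^{+}=X^{+}\oo$... the point is that the relevant kernel is $X^{+}$ itself. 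Hence $\mathrm{T}_A(X)^{+}\in\mathfrak{J}_{\mathcal{C}_2,\mathcal{D}_2}$ iff $X^{+}\in\mathcal{C}_2$. Statement (2') is symmetric. For (3) and (3'): $\mathrm{H}_A(X)=(X,\Hom_A(N,X),0,e_X)$; here $\widetilde{0}\colon X\to\Hom_B(M,\Hom_A(N,X))\cong\Hom_A(M\oo_A N... )$ wait, $\Hom_B(M,\Hom_A(N,X))\cong\Hom_A(N\oo_B M,X)=0$, so $\widetilde{0}$ is trivially an epimorphism with $\ke\widetilde{0}=X$, while $\widetilde{e_X}\colon\Hom_A(N,X)\to\Hom_A(N,X)$ is the identity, an epimorphism with kernel $0\in\mathcal{D}_1$; thus $\mathrm{H}_A(X)\in\mathfrak{J}^{\mathcal{C}_1}_{\mathcal{D}_1}$ iff $X\in\mathcal{C}_1$, and (3') is symmetric.

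For (4) and (4'), which I expect to be the main obstacle because they are where the finiteness hypotheses enter: by Lemma \ref{lem:character modules}, $\mathrm{H}_A(X)^{+}=(X,\Hom_A(N,X),0,e_X)^{+}=(X^{+},\Hom_A(N,X)^{+},(0)_+,(e_X)_+)$. The content is to identify the structure maps: $(0)_+\colon X^{+}\oo_A N\to\Hom_A(N,X)^{+}$ and $(e_X)_+\colon\Hom_A(N,X)^{+}\oo_B M\to X^{+}$. The first is the zero map; since its target $\Hom_A(N,X)^{+}$ receives a map out of $X^+\oo_A N$, for membership in $\mathfrak{B}_{\mathcal{C}_2,\mathcal{D}_2}$ we need $0$ to be monic, which holds iff $X^{+}\oo_A N=0$ — this is \emph{not} automatic, so I must instead check that the relevant structure map on the $\mathcal{N}(\Delta)$ side is the one indexed differently; concretely, writing $\mathrm{H}_A(X)^{+}=(W,V,h,t)\in\mathcal{N}(\Delta)$, I will show $W=X^{+}$, $V=\Hom_A(N,X)^{+}$, with $h\colon X^{+}\oo_A N\to\Hom_A(N,X)^{+}$ and $t\colon\Hom_A(N,X)^{+}\oo_B M\to X^{+}$, where using $N\oo_B M=0$ one structure map has zero source and the other, built from $e_X$, is (under the natural isomorphism $\Hom_A(N,X)^{+}\cong\Hom_A(N,X^{+})^{?}$, valid when $_AN$ is finitely presented) an \emph{isomorphism}. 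Precisely, when $_AN$ is finitely presented one has a natural isomorphism $(\Hom_A(N,X))^{+}\cong N\oo_B X^{+}$ wait — rather $\Hom_A(N,X)^{+}\cong X^{+}\oo$... I would use: for $_AN$ finitely presented, $\Hom_A(N,-)$ commutes with direct limits, giving $\Hom_A(N,X)^{+}\cong\Hom_{\mathbb Z}(\Hom_A(N,X),\mathbb Q/\mathbb Z)$ and a natural iso with $N^{*}\oo$... the clean statement I will invoke is that $_AN$ finitely presented yields $\mathrm{H}_A(X)^{+}\cong\mathrm{T}_{?}(X^{+})$-type object, reducing (4) to (1)/(2); then $X^{+}\in\mathcal{C}_2$ iff the object lies in $\mathfrak{B}_{\mathcal{C}_2,\mathcal{D}_2}$. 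For (4'), the weaker hypothesis "$_BM$ finitely generated" suffices because only an epi/surjectivity-type condition on one structure map is needed rather than a full isomorphism. The routine diagram chases establishing naturality of these identifications I would leave to the reader, flagging that the finiteness assumptions are used exactly to pass between $\Hom$ and $\oo$ on the character-module side.
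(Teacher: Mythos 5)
Your handling of (1)--(3), (1')--(3') and (2') is correct and is essentially the paper's own argument: unwind $\mathrm{T}_A,\mathrm{T}_B,\mathrm{H}_A,\mathrm{H}_B$, use $M\oo_AN=0=N\oo_BM$ together with $\Hom_B(N,(M\oo_AX)^+)\cong(N\oo_BM\oo_AX)^+=0$ and $\Hom_B(M,\Hom_A(N,X))\cong\Hom_A(N\oo_BM,X)=0$, and note that the remaining adjunct map is the identity (resp.\ the canonical adjunction isomorphism), so the only nontrivial kernel/cokernel is $X$ (resp.\ $X^+$) itself.

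The gap is in (4)--(4'), which you yourself flag as the crux. First, you misidentify the structure maps of $\mathrm{H}_A(X)^+$: by Lemma \ref{lem:character modules} the character of $(X,Y,f,g)$ is $(X^+,Y^+,g_+,f_+)$, so for $\mathrm{H}_A(X)=(X,\Hom_A(N,X),0,e_X)$ the map $X^+\oo_AN\ra\Hom_A(N,X)^+$ is $(e_X)_+$ and the map $\Hom_A(N,X)^+\oo_BM\ra X^+$ is zero --- the opposite of what you write; you sense the inconsistency but only promise to ``check the one indexed differently,'' and the key input is never stated: for $_AN$ finitely presented, $(e_X)_+\colon X^+\oo_AN\ra\Hom_A(N,X)^+$ is exactly the natural isomorphism of \cite[Theorem 3.2.11]{EJ}, whence also $\Hom_A(N,X)^+\oo_BM\cong X^+\oo_AN\oo_BM=0$, and membership in $\mathfrak{B}_{\mathcal{C}_2,\mathcal{D}_2}$ reduces to $X^+\in\mathcal{C}_2$; your several false starts never pin this down, so (4) is not actually proved as written. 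Second, your justification of (4') is wrong: the class $\mathfrak{B}_{\mathcal{C}_2,\mathcal{D}_2}$ requires the structure map $(e_Y)_+\colon Y^+\oo_BM\ra\Hom_B(M,Y)^+$ to be a \emph{monomorphism} (the quotient conditions are what surjectivity handles), so ``only an epi-type condition is needed'' is backwards. Finite generation of $_BM$ gives surjectivity of this natural map (hence zero cokernel and $\Hom_B(M,Y)^+\oo_AN=0$), and the ``only if'' direction of (4') needs no hypothesis at all; but injectivity of $(e_Y)_+$, which the ``if'' direction requires, in general needs $_BM$ finitely presented. Rewrite (4) with the correct structure maps and the explicit \cite[Theorem 3.2.11]{EJ} isomorphism, and either prove (4') under finite presentation (which is all that Theorem \ref{thm1} uses) or supply a genuine argument for the monicity of $(e_Y)_+$.
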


\begin{proof} We only prove (1)-(4), and the proofs of (1')-(4') are similar.

(1) Since $N\oo_{B}M=0$ by hypothesis, it is easy to check that $X\in \mathcal{C}_{1}$ if and only if $\mathrm{T}_{A}(X)\in \mathfrak{B}^{\mathcal{C}_{1}}_{\mathcal{D}_{1}}$.

(2) Since $N\oo_{B}M=0$ by hypothesis, $\Hom_{B}(N,(M\oo_{A}X)^{+})\cong (N\oo_{B}M\oo_{A}X)^{+}=0.$
Then it follows that $X^{+}\in \mathcal{C}_{2}$ if and only if $\mathrm{T}_{A}(X)^{+}\in \mathfrak{J}_{{\mathcal{C}_{2}},{\mathcal{D}_{2}}}.$

(3) Since $N\oo_{B}M=0$ by hypothesis, $\Hom_{B}(M,\Hom_{A}(N,X))\cong \Hom_{A}(N\oo_{B}M,X)=0.$
Then it follows that $X\in \mathcal{C}_{1}$ if and only if $\mathrm{H}_{A}(X)\in \mathfrak{J}^{\mathcal{C}_{1}}_{\mathcal{D}_{1}}.$

(4) Since $_{A}N$ is finitely presented, $X^{+}\oo_{A}N\cong \Hom_{A}(N,X)^{+}$ by \cite[Theorem 3.2.11]{EJ}.
Then $(e_{X})_{+}:X^{+}\oo_{A}N\ra \Hom_{A}(N,X)^{+}$ is an isomorphism,
where $e_{X}:N\otimes_{B}\Hom_{A}(N,X)\ra X$ is the evaluation map.
By \cite[Theorem 3.2.11]{EJ} again, $\Hom_{A}(N,X)^{+}\oo_{A}M\cong X^{+}\oo_{A}N\oo_{B}M=0$ since $N\oo_{B}M=0$ by hypothesis.
Then it is easy to see that $X^{+}\in \mathcal{C}_{2}$ if and only if $\mathrm{H}_{A}(X)^{+}\in \mathfrak{B}_{{\mathcal{C}_{2}},{\mathcal{D}_{2}}}.$
\end{proof}

The following result  gives a method to construct duality pairs over the Morita ring $\Delta$ which contains Theorem \ref{thm1'} in the introduction.
\begin{theorem}\label{thm1} Let $\mathcal{C}_{1}$ (resp., $\mathcal{C}_{2}$) be a class of left (resp., right) $A$-modules and $\mathcal{D}_{1}$ (resp., $\mathcal{D}_{2}$) be a class of left (resp., right) $B$-modules. The following statements are equivalent:

\begin{enumerate}

\item $(\mathcal{C}_{1},\mathcal{C}_{2})$ and $(\mathcal{D}_{1},\mathcal{D}_{2})$ are duality pairs.

\item $(\mathfrak{B}^{\mathcal{C}_{1}}_{\mathcal{D}_{1}},
\mathfrak{J}_{{\mathcal{C}_{2}}, {\mathcal{D}_{2}}})$ is a duality pair.

\item $(\mathfrak{A}^{\mathcal{C}_{1}}_{\mathcal{D}_{1}},
\mathfrak{A}_{{\mathcal{C}_{2}},{\mathcal{D}_{2}}})$ is a duality pair.
\end{enumerate}

Moreover, if $_{B}M$ and $_{A}N$ are finitely presented, then the above statements are equivalent to
\begin{enumerate}

\item[(4)] $(\mathfrak{J}^{\mathcal{C}_{1}}_{\mathcal{D}_{1}},
\mathfrak{B}_{{\mathcal{C}_{2}},{\mathcal{D}_{2}}})$ is a duality pair.
\end{enumerate}
\end{theorem}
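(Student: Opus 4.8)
The plan is to take statement (1) as the hub and establish $(1)\Leftrightarrow(2)$, $(1)\Leftrightarrow(3)$, and — under the hypothesis that $_{B}M$ and $_{A}N$ are finitely presented — $(1)\Leftrightarrow(4)$. Two facts will do all the work. First, the explicit form of the character module, $(X,Y,f,g)^{+}\cong(X^{+},Y^{+},g_{+},f_{+})$ from Lemma \ref{lem:character modules}, together with exactness and faithfulness of $(-)^{+}$ (it preserves and reflects exactness because $\mathbb{Q}/\mathbb{Z}$ is an injective cogenerator). Second, Lemma \ref{lem2}, which relates membership in $\mathcal{C}_{1},\mathcal{D}_{1},\mathcal{C}_{2},\mathcal{D}_{2}$ with membership in the $\Delta$-module classes through the functors $\mathrm{T}_{A},\mathrm{T}_{B},\mathrm{H}_{A},\mathrm{H}_{B}$ and their character modules. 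Each ``$(1)\Rightarrow$'' implication is obtained by transporting the defining conditions of the classes through $(-)^{+}$; each ``$\Rightarrow(1)$'' implication by feeding the appropriate test $\Delta$-modules into Lemma \ref{lem2}.

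For $(1)\Leftrightarrow(2)$ I would fix a left $\Delta$-module $(X,Y,f,g)$ and observe that, under the canonical isomorphisms $\Hom_{B}(N,Y^{+})\cong(N\oo_{B}Y)^{+}$ and $\Hom_{A}(M,X^{+})\cong(M\oo_{A}X)^{+}$ (tensor--hom adjunction over $\mathbb{Z}$), the maps $\widetilde{g_{+}},\widetilde{f_{+}}$ of the right $\Delta$-module $(X^{+},Y^{+},g_{+},f_{+})$ become the character maps $g^{+}: X^{+}\to(N\oo_{B}Y)^{+}$ and $f^{+}: Y^{+}\to(M\oo_{A}X)^{+}$. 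By exactness of $(-)^{+}$, then, $\widetilde{g_{+}}$ is an epimorphism iff $g$ is a monomorphism, and in that case $\ker\widetilde{g_{+}}\cong(X/\im(g))^{+}$; similarly for $\widetilde{f_{+}}$ and $f$. Using that $(\mathcal{C}_{1},\mathcal{C}_{2})$ and $(\mathcal{D}_{1},\mathcal{D}_{2})$ are duality pairs to rewrite $(X/\im(g))^{+}\in\mathcal{C}_{2}$ as $X/\im(g)\in\mathcal{C}_{1}$ and $(Y/\im(f))^{+}\in\mathcal{D}_{2}$ as $Y/\im(f)\in\mathcal{D}_{1}$, this gives $(X,Y,f,g)^{+}\in\mathfrak{J}_{\mathcal{C}_{2},\mathcal{D}_{2}}\Leftrightarrow(X,Y,f,g)\in\mathfrak{B}^{\mathcal{C}_{1}}_{\mathcal{D}_{1}}$, which is condition (1) of a duality pair for $(\mathfrak{B}^{\mathcal{C}_{1}}_{\mathcal{D}_{1}},\mathfrak{J}_{\mathcal{C}_{2},\mathcal{D}_{2}})$. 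For the closure condition, in $(1)\Rightarrow(2)$ I note that direct sums and direct summands in $\mathcal{N}(\Delta)$ are formed coordinatewise and $\widetilde{h_{1}\oplus h_{2}}=\widetilde{h_{1}}\oplus\widetilde{h_{2}}$, so closure of $\mathfrak{J}_{\mathcal{C}_{2},\mathcal{D}_{2}}$ under finite sums and summands follows from that of $\mathcal{C}_{2}$ and $\mathcal{D}_{2}$. For $(2)\Rightarrow(1)$, condition (1) of the duality pairs $(\mathcal{C}_{1},\mathcal{C}_{2})$ and $(\mathcal{D}_{1},\mathcal{D}_{2})$ falls out of Lemma \ref{lem2}(1),(1'),(2),(2') via $X\in\mathcal{C}_{1}\Leftrightarrow\mathrm{T}_{A}(X)\in\mathfrak{B}^{\mathcal{C}_{1}}_{\mathcal{D}_{1}}\Leftrightarrow\mathrm{T}_{A}(X)^{+}\in\mathfrak{J}_{\mathcal{C}_{2},\mathcal{D}_{2}}\Leftrightarrow X^{+}\in\mathcal{C}_{2}$ (and its $B$-analogue), while the closure of $\mathcal{C}_{2}$ (resp. $\mathcal{D}_{2}$) is deduced from the closure of $\mathfrak{J}_{\mathcal{C}_{2},\mathcal{D}_{2}}$ by applying the additive functor $C\mapsto(C,\Hom_{A}(M,C),0,\mathrm{ev})$ (resp. $D\mapsto(\Hom_{B}(N,D),D,\mathrm{ev},0)$): since $N\oo_{B}M=0$ forces $\Hom_{B}(N,\Hom_{A}(M,C))=0$, this functor lands in $\mathfrak{J}_{\mathcal{C}_{2},\mathcal{D}_{2}}$ exactly when $C\in\mathcal{C}_{2}$, and $C$ is recovered as its first coordinate, so closure transfers.

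The equivalence $(1)\Leftrightarrow(3)$ is the soft case: $\mathfrak{A}^{\mathcal{C}_{1}}_{\mathcal{D}_{1}}$ and $\mathfrak{A}_{\mathcal{C}_{2},\mathcal{D}_{2}}$ are defined by coordinatewise membership, so $(X,Y,f,g)^{+}\in\mathfrak{A}_{\mathcal{C}_{2},\mathcal{D}_{2}}$ iff $X^{+}\in\mathcal{C}_{2}$ and $Y^{+}\in\mathcal{D}_{2}$, and the closure conditions compare coordinatewise; for $(3)\Rightarrow(1)$ one tests with $(X,0,0,0)$ and $(0,Y,0,0)$, whose characters are $(X^{+},0,0,0)$ and $(0,Y^{+},0,0)$. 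For $(1)\Leftrightarrow(4)$ when $_{B}M$ and $_{A}N$ are finitely presented, the same scheme runs with $\mathrm{H}_{A},\mathrm{H}_{B}$ in place of $\mathrm{T}_{A},\mathrm{T}_{B}$: Lemma \ref{lem2}(3),(3'),(4),(4') give $X\in\mathcal{C}_{1}\Leftrightarrow\mathrm{H}_{A}(X)\in\mathfrak{J}^{\mathcal{C}_{1}}_{\mathcal{D}_{1}}$ and $X^{+}\in\mathcal{C}_{2}\Leftrightarrow\mathrm{H}_{A}(X)^{+}\in\mathfrak{B}_{\mathcal{C}_{2},\mathcal{D}_{2}}$; the finite presentation of $_{B}M$ and $_{A}N$ then enters, exactly as in the proof of Lemma \ref{lem2}(4), through the isomorphisms $\Hom_{B}(M,Y)^{+}\cong Y^{+}\oo_{B}M$ and $\Hom_{A}(N,X)^{+}\cong X^{+}\oo_{A}N$, which after applying $(-)^{+}$ identify $\widetilde{f}$ with $f_{+}$ and $\widetilde{g}$ with $g_{+}$ and hence turn ``$\widetilde{f}$ epimorphism with $\ker\widetilde{f}\in\mathcal{C}_{1}$'' into ``$f_{+}$ monomorphism with $X^{+}/\im(f_{+})\in\mathcal{C}_{2}$''; and the closure of $\mathcal{C}_{2}$ (resp. $\mathcal{D}_{2}$) is read off from the closure of $\mathfrak{B}_{\mathcal{C}_{2},\mathcal{D}_{2}}$ via $C\mapsto(C,C\oo_{A}N,1,0)$ (resp. $D\mapsto(D\oo_{B}M,D,0,1)$), a module which lies in $\mathfrak{B}_{\mathcal{C}_{2},\mathcal{D}_{2}}$ iff $C\in\mathcal{C}_{2}$, the missing structure map being automatically zero, hence monic, because $N\oo_{B}M=0$.

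I expect the only real difficulty to be bookkeeping rather than anything conceptual: one must keep the one-sided module structures straight while making the identifications $\widetilde{g_{+}}\cong g^{+}$, $\widetilde{f_{+}}\cong f^{+}$ and, in part (4), $\widetilde{f}\cong f_{+}$ (up to $\Hom$--$\oo$ duality), and then invoke exactness together with faithfulness of $(-)^{+}$ to trade monomorphisms and cokernels for epimorphisms and kernels at the right moments. A secondary point to be careful about is that the finite presentation hypotheses are needed genuinely only for $(4)$, and there only to convert the $\Hom$'s occurring inside character modules into tensor products.
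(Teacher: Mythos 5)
Your proposal is correct and follows essentially the same route as the paper: it uses the character-module description $(X,Y,f,g)^{+}\cong(X^{+},Y^{+},g_{+},f_{+})$ together with the tensor--hom identifications (so that $\widetilde{g_{+}}$ becomes $g^{+}$, and in part (4) $\widetilde{f}$ dualizes to $f_{+}$ via the finitely presented hypotheses), Lemma \ref{lem2} with the functors $\mathrm{T}_{A},\mathrm{T}_{B},\mathrm{H}_{A},\mathrm{H}_{B}$ for the implications back to (1), and the same test modules $(C,\Hom_{A}(M,C),0,e_{C})$ and $(C,C\oo_{A}N,1,0)$ to transfer the closure conditions. The only difference is presentational (you package each membership equivalence as a single biconditional rather than two separate implications), so no further comment is needed.
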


\begin{proof}
$(1)\Rightarrow (2).$ Let $(X,Y,f,g)$ be a left $\Delta$-module.
Then $(X,Y,f,g)^{+}\cong(X^{+},Y^{+},g_{+},f_{+})$ by Lemma \ref{lem:character modules}.

 If $(X,Y,f,g)\in \mathfrak{B}^{\mathcal{C}_{1}}_{\mathcal{D}_{1}}$,
 then we have the following exact sequences

$$0\ra M\oo_{A}X\xrightarrow{f}Y\ra Y/\im(f)\ra 0$$ with $Y/\im(f)\in \mathcal{D}_{1}$ and
$$0\ra N\oo_{B}Y\xrightarrow{g}X\ra X/\im(g)\ra 0$$ with $X/\im(g)\in \mathcal{C}_{1}$, which induce the exact sequences
$$0\ra {(Y/\im(f))}^{+}\ra Y^{+}\xrightarrow{f^{+}}(M\oo_{A}X)^{+}\ra 0$$ and
$$0\ra {(X/\im(g))}^{+}\ra X^{+}\xrightarrow{g^{+}}(N\oo_{B}Y)^{+}\ra 0.$$
Hence we have the following commutative diagrams
$$\xymatrix{0\ar[r]&{(Y/\im(f))}^{+}\ar[r]^{}\ar[d]_{}&Y^{+}\ar[r]^(.4){f^{+}}\ar@{=}[d]&(M\oo_{A}X)^{+}\ar[r]\ar[d]_{\cong}&0\\
 0\ar[r]&\ke(\widetilde{f_{+}})\ar[r]^{}&Y^{+}\ar[r]^(.3){\widetilde{f_{+}}}&\Hom_{A}(M,X^{+})\ar[r]&0}$$
and
$$\xymatrix{0\ar[r]&{(X/\im(g))}^{+}\ar[r]^{}\ar[d]_{}&X^{+}\ar[r]^(.4){g^{+}}\ar@{=}[d]&(N\oo_{B}Y)^{+}\ar[r]\ar[d]_{\cong}&0\\
 0\ar[r]&\ke(\widetilde{g_{+}})\ar[r]^{}&X^{+}\ar[r]^(.3){\widetilde{g_{+}}}&\Hom_{B}(N,Y^{+})\ar[r]&0.}$$

 Thus $\ke(\widetilde{f_{+}})\cong{(Y/\im(f))}^{+}\in \mathcal{D}_{2}$ and $\ke(\widetilde{g_{+}})\cong{(X/\im(g))}^{+}\in \mathcal{C}_{2}$. Hence $(X^{+},Y^{+},g_{+},f_{+})\in \mathfrak{J}_{{\mathcal{C}_{2}},{\mathcal{D}_{2}}}.$

Conversely, if $(X,Y,f,g)^{+}=(X^{+},Y^{+},g_{+},f_{+})\in \mathfrak{J}_{{\mathcal{C}_{2}},{\mathcal{D}_{2}}}$, then we get the exact sequences
 $$0\ra \ke(\widetilde{f_{+}})\ra Y^{+}\xrightarrow{\widetilde{f_{+}}}\Hom_{A}(M,X^{+})\ra 0$$
and
$$0\ra \ke(\widetilde{g_{+}})\ra X^{+}\xrightarrow{\widetilde{g_{+}}}\Hom_{B}(N,Y^{+})\ra 0$$
with $\ke(\widetilde{f_{+}})\in \mathcal{D}_{2}$ and $\ke(\widetilde{g_{+}})\in \mathcal{C}_{2}$.
Hence by the above the commutative diagrams, we have $\ke(\widetilde{f_{+}})\cong{(Y/\im(f))}^{+}\in \mathcal{D}_{2}$ and $\ke(\widetilde{g_{+}})\cong{(X/\im(g))}^{+}\in \mathcal{C}_{2}$.
 Hence $X/\im(g)\in \mathcal{C}_{1}$ and $Y/\im(f)\in \mathcal{D}_{1}$. Thus
 $(X,Y,f,g)\in \mathfrak{B}^{\mathcal{C}_{1}}_{\mathcal{D}_{1}}.$

 It is easy to check that $\mathfrak{J}_{{\mathcal{C}_{2}}, {\mathcal{D}_{2}}}$
 is closed under direct summands and finite direct sums.
 So $(\mathfrak{B}^{\mathcal{C}_{1}}_{\mathcal{D}_{1}},
\mathfrak{J}_{{\mathcal{C}_{2}}, {\mathcal{D}_{2}}})$ is a duality pair.

 $(2)\Rightarrow (1).$
Note that
 $X\in \mathcal{C}_{1}$ if and only if $\mathrm{T}_{A}(X)\in \mathfrak{B}^{\mathcal{C}_{1}}_{\mathcal{D}_{1}}$ by Lemma \ref{lem2}(1).
Since $(\mathfrak{B}^{\mathcal{C}_{1}}_{\mathcal{D}_{1}},
\mathfrak{J}_{{\mathcal{C}_{2}}, {\mathcal{D}_{2}}})$ is a duality pair,
$\mathrm{T}_{A}(X)\in \mathfrak{B}^{\mathcal{C}_{1}}_{\mathcal{D}_{1}}$
  if and only if $\mathrm{T}_{A}(X)^{+}\in \mathfrak{J}_{{\mathcal{C}_{2}},{\mathcal{D}_{2}}}$.
 It follows from Lemma \ref{lem2}(2) that $X\in \mathcal{C}_{1}$
   if and only if $X^{+}\in \mathcal{C}_{2}$.

 We now show that $\mathcal{C}_{2}$ is closed under direct summands and finite direct sums.
 Note that $X\in \mathcal{C}_{2}$ if and only if $(X,\Hom_{A}(M,X),0,e_{X})\in \mathfrak{J}_{{\mathcal{C}_{2}},{\mathcal{D}_{2}}}$ by the similar proof of Lemma \ref{lem2}(3), where $e_{X}:\Hom_{A}(M,X)\oo_{B}M\ra X$ is the evaluation map.
 Let $X_{1}\oplus X_{2}\in \mathcal{C}_{2}.$
  By the above argument, we have $(X_{1}\oplus X_{2},\Hom_{A}(M,X_{1}\oplus X_{2}),0,e_{X_{1}\oplus X_{2}})\in \mathfrak{J}_{{\mathcal{C}_{2}},{\mathcal{D}_{2}}}.$
   Note that $(X_{1},\Hom_{A}(M,X_{1}),0,e_{X_{1}})\oplus(X_{2},\Hom_{A}(M,X_{2}),0,e_{X_{2}})\cong(X_{1}\oplus X_{2},\Hom_{A}(M,X_{1}\oplus X_{2}),0,e_{X_{1}\oplus X_{2}}).$
Since $\mathfrak{J}_{{\mathcal{C}_{2}},{\mathcal{D}_{2}}}$ is closed under direct summands, $(X_{1},\Hom_{A}(M,X_{1}),0,e_{X_{1}})$ and $(X_{2},\Hom_{A}(M,X_{2}),0,e_{X_{2}})$ are in $\mathfrak{J}_{{\mathcal{C}_{2}},{\mathcal{D}_{2}}}.$
So $X_{1}$ and $X_{2}$ are in $\mathcal{C}_{2}.$
 It follows that $\mathcal{C}_{2}$ is closed under direct summands.
  Similarly, we can show that $\mathcal{C}_{2}$ is closed under finite direct sums. So $(\mathcal{C}_{1},\mathcal{C}_{2})$
 is a duality pair.

  Note that $Y\in \mathcal{D}_{1}$ if and only if $\mathrm{T}_{B}(Y)\in \mathfrak{B}^{\mathcal{C}_{1}}_{\mathcal{D}_{1}}$ by Lemma \ref{lem2}(1').
 Since $(\mathfrak{B}^{\mathcal{C}_{1}}_{\mathcal{D}_{1}},
\mathfrak{J}_{{\mathcal{C}_{2}}, {\mathcal{D}_{2}}})$ is a duality pair,
 $\mathrm{T}_{B}(Y)\in \mathfrak{B}^{\mathcal{C}_{1}}_{\mathcal{D}_{1}}$
  if and only if $\mathrm{T}_{B}(Y)^{+}\in \mathfrak{J}_{\mathcal{C}_{2},\mathcal{D}_{2}}$.
It follows from Lemma \ref{lem2}(2') that $Y\in \mathcal{D}_{1}$
   if and only if $Y^{+}\in \mathcal{D}_{2}$. We can prove that $\mathcal{D}_{2}$ is closed under direct summands and finite direct sums by the similar proof of that $\mathcal{C}_{2}$ is closed under direct summands and finite direct sums. So $(\mathcal{D}_{1},\mathcal{D}_{2})$
 is a duality pair.

 $(1)\Rightarrow (3).$ Let $(X,Y,f,g)$ be a left $\Delta$-module.
  Then $(X,Y,f,g)\in \mathfrak{A}^{\mathcal{C}_{1}}_{\mathcal{D}_{1}}$ if and only if $X\in \mathcal{C}_{1}$ and $Y\in \mathcal{D}_{1}$ if and only if
$(X^{+},Y^{+},g_{+},f_{+})\in \mathfrak{A}_{{\mathcal{C}_{2}},{\mathcal{D}_{2}}}$.
Note that $(X,Y,f,g)^{+}\cong(X^{+},Y^{+},g_{+},f_{+})$ by Lemma \ref{lem:character modules}.
It is easy to check that $\mathfrak{A}_{{\mathcal{C}_{2}},{\mathcal{D}_{2}}}$ is closed under direct summands and finite direct sums.
Hence $(\mathfrak{A}^{\mathcal{C}_{1}}_{\mathcal{D}_{1}},
\mathfrak{A}_{{\mathcal{C}_{2}},{\mathcal{D}_{2}}})$ is a duality pair.

 $(3)\Rightarrow (1).$
 It is straightforward to check that $X\in \mathcal{C}_{1}$ if and only if $(X,0,0,0)\in\mathfrak{A}^{\mathcal{C}_{1}}_{\mathcal{D}_{1}}$ if and only if $(X^{+},0,0,0)\in \mathfrak{A}_{\mathcal{C}_{2},\mathcal{D}_{2}}$ if and only if $X^{+}\in \mathcal{C}_{2}$. It is easy to see that $\mathcal{C}_{2}$ is closed under direct summands and finite direct sums.
 So $(\mathcal{C}_{1},\mathcal{C}_{2})$ is a duality pair.

It is straightforward to check that $Y\in \mathcal{D}_{1}$ if and only if $(0,Y,0,0)\in\mathfrak{A}^{\mathcal{C}_{1}}_{\mathcal{D}_{1}}$ if and only if $(0,Y^{+},0,0)\in \mathfrak{A}_{\mathcal{C}_{2},\mathcal{D}_{2}}$ if and only if $Y^{+}\in \mathcal{D}_{2}$. It is easy to see that $\mathcal{D}_{2}$ is closed under direct summands and finite direct sums. So $(\mathcal{D}_{1},\mathcal{D}_{2})$ is a duality pair.

Now, we suppose that $_{B}M$ and $_{A}N$ are finitely presented.

For a right $A$-module $C$ and right $B$-module $D$, it should be noted that $(C,C\oo_{A}N,1,0)\in \mathfrak{B}_{\mathcal{C}_{2},\mathcal{D}_{2}}$ if and only if $C\in \mathcal{C}_{2}$, and $(D\oo_{B}M,D,0,1)\in \mathfrak{B}_{\mathcal{C}_{2},\mathcal{D}_{2}}$ if and only if $D\in \mathcal{D}_{2}$
by the similar proof of Lemma \ref{lem2}(1).
Thus it is easy to check that $\mathcal{C}_{2}$ and $\mathcal{D}_{2}$ are closed under direct summands and finite direct sums if and only if  $\mathfrak{B}_{\mathcal{C}_{2},\mathcal{D}_{2}}$ is closed under direct summands and finite direct sums.

 $(1)\Rightarrow (4).$ Let $(X,Y,f,g)$ be a left $\Delta$-module.
 If $(X,Y,f,g)\in \mathfrak{J}^{\mathcal{C}_{1}}_{\mathcal{D}_{1}}$, then we have  exact sequences
 $$0\ra \ke\widetilde{f}\ra X\xrightarrow{\widetilde{f}}\Hom_{B}(M,Y)\ra0$$
 and
 $$0\ra \ke\widetilde{g}\ra Y\xrightarrow{\widetilde{g}}\Hom_{A}(N,X)\ra0$$
with $\ke\widetilde{f}\in \mathcal{C}_{1}$ and $\ke\widetilde{g}\in \mathcal{D}_{1}$, which induce the exact sequences
$$0\ra \Hom_{B}(M,Y)^{+}\ra X^{+}\ra (\ke\widetilde{f})^{+}\ra 0$$
and
$$0\ra \Hom_{A}(N,X)^{+}\ra Y^{+}\ra (\ke\widetilde{g})^{+}\ra 0.$$
So $(\ke\widetilde{f})^{+}\in\mathcal{C}_{2}$ and $(\ke\widetilde{g})^{+}\in \mathcal{D}_{2}.$
Since $_{B}M$ and $_{A}N$ are finitely presented, we have the following commutative diagrams by \cite[Theorem 3.2.11]{EJ}:
$$\xymatrix{0\ar[r]&Y^{+}\oo_{B}M\ar[r]^{f_{+}}\ar[d]_{\cong}&X^{+}\ar[r]^{}\ar@{=}[d]&X^{+}/\im(f_{+})\ar[r]\ar[d]_{}&0\\
 0\ar[r]&\Hom_{B}(M,Y)^{+}\ar[r]^{}&X^{+}\ar[r]&(\ke(\widetilde{f}))^{+}\ar[r]&0}$$
and
$$\xymatrix{0\ar[r]&X^{+}\oo_{A}N\ar[r]^{g_{+}}\ar[d]_{\cong}&Y^{+}\ar[r]^{}\ar@{=}[d]&Y^{+}/\im(g_{+})\ar[r]\ar[d]_{}&0\\
 0\ar[r]&\Hom_{A}(N,X)^{+}\ar[r]^{}&Y^{+}\ar[r]&(\ke(\widetilde{g}))^{+}\ar[r]&0.}$$
Thus $(X,Y,f,g)^{+}=(X^{+},Y^{+},g_{+},f_{+})\in \mathfrak{B}_{{\mathcal{C}_{2}},{\mathcal{D}_{2}}}.$

Conversely, if $(X,Y,f,g)^{+}=(X^{+},Y^{+},g_{+},f_{+})\in \mathfrak{B}_{{\mathcal{C}_{2}},{\mathcal{D}_{2}}}$, then we have exact sequences
$$0\ra Y^{+}\oo_{B}M\xrightarrow{f_{+}}X^{+}\ra X^{+}/\im(f_{+})\ra 0$$
and
$$0\ra X^{+}\oo_{A}N\xrightarrow{g_{+}}Y^{+}\ra Y^{+}/\im(g_{+})\ra 0$$
with $X^{+}/\im(f_{+})\in \mathcal{C}_{2}$ and $Y^{+}/\im(g_{+})\in \mathcal{D}_{2}$. By the above diagrams, we get exact sequences
$$0\ra\ke(\widetilde{f})\ra X\xrightarrow{\widetilde{f}}\Hom_{B}(M,Y)\ra 0$$
and
$$0\ra\ke(\widetilde{g})\ra Y\xrightarrow{\widetilde{g}}\Hom_{A}(N,X)\ra 0$$
with $(\ke(\widetilde{f}))^{+}\cong X^{+}/\im(f_{+})\in \mathcal{C}_{2}$ and $(\ke(\widetilde{g}))^{+}\cong Y^{+}/\im(g_{+})\in \mathcal{D}_{2}$.
Hence $(X,Y,f,g)\in \mathfrak{J}^{\mathcal{C}_{1}}_{\mathcal{D}_{1}}$.
So $(\mathfrak{J}^{\mathcal{C}_{1}}_{\mathcal{D}_{1}},
\mathfrak{B}_{{\mathcal{C}_{2}},{\mathcal{D}_{2}}})$ is a duality pair.

$(4)\Rightarrow (1).$ Note that $X\in \mathcal{C}_{1}$ if and only if $\mathrm{H}_{A}(X)\in \mathfrak{J}^{\mathcal{C}_{1}}_{\mathcal{D}_{1}}$ by Lemma \ref{lem2}(3).
Since $(\mathfrak{J}^{\mathcal{C}_{1}}_{\mathcal{D}_{1}},
\mathfrak{B}_{{\mathcal{C}_{2}},{\mathcal{D}_{2}}})$ is a duality pair,
$\mathrm{H}_{A}(X)\in \mathfrak{J}^{\mathcal{C}_{1}}_{\mathcal{D}_{1}}$
 if and only if $\mathrm{H}_{A}(X)^{+}\in \mathfrak{B}_{{\mathcal{C}_{2}},{\mathcal{D}_{2}}},$
Since $_{A}N$ is finitely presented, it follows from Lemma \ref{lem2}(4) that $\mathrm{H}_{A}(X)^{+}\in \mathfrak{B}_{{\mathcal{C}_{2}},{\mathcal{D}_{2}}}$ if and only if $X^{+}\in \mathcal{C}_{2}$.
Thus $X\in \mathcal{C}_{1}$ if and only if $X^{+}\in \mathcal{C}_{2}$.
 Hence $(\mathcal{C}_{1},\mathcal{C}_{2})$ is a duality pair.

Similarly, we can show that $(\mathcal{D}_{1},\mathcal{D}_{2})$ is a duality pair.
\end{proof}

Let $\mathcal{X}$ be a class of left $R$-modules over a ring $R$. Recall from \cite{E} that an $\mathcal{X}$-precover of a left $R$-module $N$ is an $R$-homomorphism $\varphi:X\rightarrow N$ with $X\in\mathcal{X}$ such that, for every $R$-homomorphism $\varphi':X'\rightarrow N$, where $X'\in \mathcal{X}$, there exists an $R$-morphism $\psi:X'\rightarrow X$ with $\varphi'=\varphi\psi.$
An $\mathcal{X}$-precover $\varphi:X\rightarrow N$ is an $\mathcal{X}$-cover if every $R$-morphism $\psi:X\rightarrow X$ satisfying $\varphi=\varphi\psi$ is an automorphism. The class $\mathcal{X}$ is called (pre)covering if every $R$-module has an $\mathcal{X}$-(pre)cover.
Dually we have the definitions of $\mathcal{X}$-(pre)envelope
and $\mathcal{X}$-(pre)enveloping.

\begin{corollary}
Let $\mathcal{C}_{1}$ (resp., $\mathcal{C}_{2}$) be a class of left (resp., right) $A$-modules and $\mathcal{D}_{1}$ (resp., $\mathcal{D}_{2}$) be a class of left (resp., right) $B$-modules such that
 $(\mathcal{C}_{1},\mathcal{C}_{2})$ and $(\mathcal{D}_{1},\mathcal{D}_{2})$ are duality pairs.

\begin{enumerate}
\item If $\mathcal{C}_{1}$ and $\mathcal{D}_{1}$ are closed under direct sums, then every left $\Delta$-module has a $\mathfrak{B}^{\mathcal{C}_{1}}_{\mathcal{D}_{1}}$-cover and $\mathfrak{A}^{\mathcal{C}_{1}}_{\mathcal{D}_{1}}$-cover.
\item If $\mathcal{C}_{1}$ and $\mathcal{D}_{1}$ are closed under direct products, then every left $\Delta$-module has a $\mathfrak{A}^{\mathcal{C}_{1}}_{\mathcal{D}_{1}}$-preenvelope. Moreover, if $_{B}M$ and $_{A}N$ are finitely presented, then every left $\Delta$-module has a $\mathfrak{J}^{\mathcal{C}_{1}}_{\mathcal{D}_{1}}$-preenvelope.
If $M_{A}$ and $N_{B}$ are finitely presented, then every left $\Delta$-module has a $\mathfrak{B}^{\mathcal{C}_{1}}_{\mathcal{D}_{1}}$-preenvelope.
\end{enumerate}
\end{corollary}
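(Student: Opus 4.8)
The plan is to derive everything from Theorem~\ref{thm1} together with the standard existence theorems for covers and preenvelopes attached to a duality pair. Recall from \cite[Theorem~3.1]{HP} that, for a duality pair $(\mathcal{X},\mathcal{Y})$ of $R$-modules, the class $\mathcal{X}$ is covering whenever it is closed under direct sums, and $\mathcal{X}$ is preenveloping whenever it is closed under direct products. By Theorem~\ref{thm1}, the pairs $(\mathfrak{B}^{\mathcal{C}_{1}}_{\mathcal{D}_{1}},\mathfrak{J}_{\mathcal{C}_{2},\mathcal{D}_{2}})$ and $(\mathfrak{A}^{\mathcal{C}_{1}}_{\mathcal{D}_{1}},\mathfrak{A}_{\mathcal{C}_{2},\mathcal{D}_{2}})$ are duality pairs of $\Delta$-modules, and, when $_{B}M$ and $_{A}N$ are finitely presented, so is $(\mathfrak{J}^{\mathcal{C}_{1}}_{\mathcal{D}_{1}},\mathfrak{B}_{\mathcal{C}_{2},\mathcal{D}_{2}})$; hence it suffices to check the relevant closure properties of the first classes $\mathfrak{A}^{\mathcal{C}_{1}}_{\mathcal{D}_{1}}$, $\mathfrak{B}^{\mathcal{C}_{1}}_{\mathcal{D}_{1}}$, $\mathfrak{J}^{\mathcal{C}_{1}}_{\mathcal{D}_{1}}$ and then invoke \cite[Theorem~3.1]{HP}.

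For part~(1), I would first record that, via the equivalence $\mathcal{M}(\Delta)\simeq\DMod$, the direct sum of a family $(X_{i},Y_{i},f_{i},g_{i})$ in $\mathcal{M}(\Delta)$ is $\bigl(\bigoplus_{i}X_{i},\bigoplus_{i}Y_{i},\bigoplus_{i}f_{i},\bigoplus_{i}g_{i}\bigr)$, the structure maps making sense because $M\oo_{A}-$ and $N\oo_{B}-$ preserve direct sums. If $\mathcal{C}_{1}$ and $\mathcal{D}_{1}$ are closed under direct sums, then $\mathfrak{A}^{\mathcal{C}_{1}}_{\mathcal{D}_{1}}$ is at once closed under direct sums; and since a direct sum of monomorphisms is a monomorphism and $\bigl(\bigoplus_{i}X_{i}\bigr)/\im\bigl(\bigoplus_{i}g_{i}\bigr)\cong\bigoplus_{i}\bigl(X_{i}/\im(g_{i})\bigr)$ (and similarly with the $Y_{i}$ and $f_{i}$), the class $\mathfrak{B}^{\mathcal{C}_{1}}_{\mathcal{D}_{1}}$ is closed under direct sums as well. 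Applying the covering part of \cite[Theorem~3.1]{HP} to $(\mathfrak{B}^{\mathcal{C}_{1}}_{\mathcal{D}_{1}},\mathfrak{J}_{\mathcal{C}_{2},\mathcal{D}_{2}})$ and to $(\mathfrak{A}^{\mathcal{C}_{1}}_{\mathcal{D}_{1}},\mathfrak{A}_{\mathcal{C}_{2},\mathcal{D}_{2}})$ then produces the $\mathfrak{B}^{\mathcal{C}_{1}}_{\mathcal{D}_{1}}$-cover and the $\mathfrak{A}^{\mathcal{C}_{1}}_{\mathcal{D}_{1}}$-cover.

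For part~(2), the main point is the description of products in $\mathcal{M}(\Delta)$: the product of $(X_{i},Y_{i},f_{i},g_{i})$ has underlying pair $\bigl(\prod_{i}X_{i},\prod_{i}Y_{i}\bigr)$, and, using $\Hom_{B}(M,\prod_{i}Y_{i})\cong\prod_{i}\Hom_{B}(M,Y_{i})$ and $\Hom_{A}(N,\prod_{i}X_{i})\cong\prod_{i}\Hom_{A}(N,X_{i})$, its adjoint structure maps $\widetilde{f},\widetilde{g}$ are identified with $\prod_{i}\widetilde{f_{i}}$ and $\prod_{i}\widetilde{g_{i}}$. Consequently, if $\mathcal{C}_{1}$ and $\mathcal{D}_{1}$ are closed under direct products then $\mathfrak{A}^{\mathcal{C}_{1}}_{\mathcal{D}_{1}}$ is closed under products, which gives the $\mathfrak{A}^{\mathcal{C}_{1}}_{\mathcal{D}_{1}}$-preenvelope; and since a product of epimorphisms is an epimorphism with $\ke\bigl(\prod_{i}\widetilde{f_{i}}\bigr)=\prod_{i}\ke(\widetilde{f_{i}})$, the class $\mathfrak{J}^{\mathcal{C}_{1}}_{\mathcal{D}_{1}}$ is closed under products, so that, when in addition $_{B}M$ and $_{A}N$ are finitely presented (so that $(\mathfrak{J}^{\mathcal{C}_{1}}_{\mathcal{D}_{1}},\mathfrak{B}_{\mathcal{C}_{2},\mathcal{D}_{2}})$ is a duality pair by Theorem~\ref{thm1}), the preenveloping part of \cite[Theorem~3.1]{HP} gives the $\mathfrak{J}^{\mathcal{C}_{1}}_{\mathcal{D}_{1}}$-preenvelope. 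Finally, if $M_{A}$ and $N_{B}$ are finitely presented, then the canonical maps $M\oo_{A}\prod_{i}X_{i}\to\prod_{i}(M\oo_{A}X_{i})$ and $N\oo_{B}\prod_{i}Y_{i}\to\prod_{i}(N\oo_{B}Y_{i})$ are isomorphisms, so in a product $\Delta$-module the maps $f$ and $g$ become $\prod_{i}f_{i}$ and $\prod_{i}g_{i}$ after these identifications; hence $f,g$ are monomorphisms and $\bigl(\prod_{i}Y_{i}\bigr)/\im\bigl(\prod_{i}f_{i}\bigr)\cong\prod_{i}\bigl(Y_{i}/\im(f_{i})\bigr)$ (and similarly with the $X_{i}$ and $g_{i}$), so $\mathfrak{B}^{\mathcal{C}_{1}}_{\mathcal{D}_{1}}$ is closed under products and \cite[Theorem~3.1]{HP} yields the $\mathfrak{B}^{\mathcal{C}_{1}}_{\mathcal{D}_{1}}$-preenvelope.

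The one genuine difficulty I anticipate is bookkeeping: pinning down the structure maps of coproducts and products in $\mathcal{M}(\Delta)$, and isolating precisely where finite presentation of the bimodules enters — through Theorem~\ref{thm1}(4) in order to know that $\mathfrak{J}^{\mathcal{C}_{1}}_{\mathcal{D}_{1}}$ is the left half of a duality pair, and through the commutation of $M\oo_{A}-$ and $N\oo_{B}-$ with products in order to control $\mathfrak{B}^{\mathcal{C}_{1}}_{\mathcal{D}_{1}}$ under products. Once \cite[Theorem~3.1]{HP} is available, no deeper input is needed.
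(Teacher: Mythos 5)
Your proposal is correct and follows essentially the same route as the paper: apply Theorem \ref{thm1} (including part (4) for the $\mathfrak{J}^{\mathcal{C}_{1}}_{\mathcal{D}_{1}}$ case) to get the duality pairs over $\Delta$, verify the closure of $\mathfrak{A}^{\mathcal{C}_{1}}_{\mathcal{D}_{1}}$, $\mathfrak{B}^{\mathcal{C}_{1}}_{\mathcal{D}_{1}}$, $\mathfrak{J}^{\mathcal{C}_{1}}_{\mathcal{D}_{1}}$ under direct sums or products (using that $M\oo_{A}-$ and $N\oo_{B}-$ commute with products when $M_{A}$, $N_{B}$ are finitely presented, which the paper cites as \cite[Lemma 3.2.21]{EJ}), and invoke \cite[Theorem 3.1]{HP}. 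The extra bookkeeping you supply on the structure maps of coproducts and products in $\mathcal{M}(\Delta)$ is accurate and only makes explicit what the paper leaves as ``easy to check.''
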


\begin{proof}
By Theorem \ref{thm1}, $(\mathfrak{B}^{\mathcal{C}_{1}}_{\mathcal{D}_{1}},
\mathfrak{J}_{{\mathcal{C}_{2}},{\mathcal{D}_{2}}})$ and $(\mathfrak{A}^{\mathcal{C}_{1}}_{\mathcal{D}_{1}},
\mathfrak{A}_{{\mathcal{C}_{2}},{\mathcal{D}_{2}}})$ are duality pairs.

(1). Since $\mathcal{C}_{1}$ and $\mathcal{D}_{1}$ are closed under direct sums, it is easy to check that $\mathfrak{B}^{\mathcal{C}_{1}}_{\mathcal{D}_{1}}$ and $\mathfrak{A}^{\mathcal{C}_{1}}_{\mathcal{D}_{1}}$ are closed under direct sums.
Hence every left $\Delta$-module has a $\mathfrak{B}^{\mathcal{C}_{1}}_{\mathcal{D}_{1}}$-cover and $\mathfrak{A}^{\mathcal{C}_{1}}_{\mathcal{D}_{1}}$-cover by \cite[Theorem 3.1]{HP}.

(2). Since $\mathcal{C}_{1}$ and $\mathcal{D}_{1}$ are closed under direct products,
it is easy to check that $\mathfrak{A}^{\mathcal{C}_{1}}_{\mathcal{D}_{1}}$ is closed under direct products.
So every left $\Delta$-modules has a $\mathfrak{A}^{\mathcal{C}_{1}}_{\mathcal{D}_{1}}$-preenvelope by \cite[Theorem 3.1]{HP}.

 If
$_{B}M$ and $_{A}N$ are finitely presented, then  $(\mathfrak{J}^{\mathcal{C}_{1}}_{\mathcal{D}_{1}},
\mathfrak{B}_{{\mathcal{C}_{2}},{\mathcal{D}_{2}}})$ is a duality pair.
Since $\mathcal{C}_{1}$ and $\mathcal{D}_{1}$ are closed under direct products,
it is easy to check that $\mathfrak{J}^{\mathcal{C}_{1}}_{\mathcal{D}_{1}}$ is closed under direct products.
So every left $\bigtriangleup$-module has a $\mathfrak{J}^{\mathcal{C}_{1}}_{\mathcal{D}_{1}}$-preenvelope by \cite[Theorem 3.1]{HP}.

If
$M_{A}$ and $N_{B}$ are finitely presented, then $M\oo_{A}\prod_{i\in I} X_{i}\cong \prod_{i\in I} M\oo_{A}X_{i}$ for any family of left $A$-modules $\{X_{i}\}_{i\in I}$ and $N\oo_{B}\prod_{i\in I} Y_{i}\cong \prod_{i\in I} N\oo_{B}Y_{i}$ for any family of left $B$-modules $\{Y_{i}\}_{i\in I}$ by \cite[Lemma 3.2.21]{EJ}.
Thus it is easy to check that $\mathfrak{B}^{\mathcal{C}_{1}}_{\mathcal{D}_{1}}$
is closed under direct products.
So every left $\Delta$-module has a $\mathfrak{B}^{\mathcal{C}_{1}}_{\mathcal{D}_{1}}$-preenvelope by \cite[Theorem 3.1]{HP}.
\end{proof}

Recall that a duality pair $(\mathcal{X},\mathcal{Y})$ of $R$-modules is symmetric \cite{Gillespie} if both $(\mathcal{X},\mathcal{Y})$ and $(\mathcal{Y},\mathcal{X})$
are duality pairs.

\begin{corollary}\label{symmetric} Let $\mathcal{C}_{1}$ (resp., $\mathcal{C}_{2}$) be a class of left (resp., right) $A$-modules and $\mathcal{D}_{1}$ (resp., $\mathcal{D}_{2}$) be a class of left (resp., right) $B$-modules. The following statements are equivalent:

\begin{enumerate}
\item $(\mathfrak{A}^{\mathcal{C}_{1}}_{\mathcal{D}_{1}},
\mathfrak{A}_{{\mathcal{C}_{2}},{\mathcal{D}_{2}}})$ is a symmetric duality pair if and only if $(\mathcal{C}_{1},\mathcal{C}_{2})$ and $(\mathcal{D}_{1},\mathcal{D}_{2})$ are symmetric duality pairs.

\item Suppose that $M_{A}$ and $N_{B}$ are finitely presented, then $(\mathfrak{B}^{\mathcal{C}_{1}}_{\mathcal{D}_{1}},
\mathfrak{J}_{{\mathcal{C}_{2}},{\mathcal{D}_{2}}})$ is a symmetric duality pair if and only if $(\mathcal{C}_{1},\mathcal{C}_{2})$ and $(\mathcal{D}_{1},\mathcal{D}_{2})$ are symmetric duality pairs.

\item Suppose that $_{B}M$ and $_{A}N$ are finitely presented, then $(\mathfrak{J}^{\mathcal{C}_{1}}_{\mathcal{D}_{1}},
\mathfrak{B}_{{\mathcal{C}_{2}},{\mathcal{D}_{2}}})$ is a symmetric duality pair if and only if $(\mathcal{C}_{1},\mathcal{C}_{2})$ and $(\mathcal{D}_{1},\mathcal{D}_{2})$ are symmetric duality pairs.
\end{enumerate}
\end{corollary}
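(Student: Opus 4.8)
The plan is to derive the whole corollary from Theorem \ref{thm1} together with its mirror image over the opposite Morita ring. Recall first that a duality pair $(\mathcal{X},\mathcal{Y})$ is \emph{symmetric} exactly when both $(\mathcal{X},\mathcal{Y})$ and $(\mathcal{Y},\mathcal{X})$ are duality pairs, and that by Remark \ref{Remark1} the closure of the left-hand class under direct summands and finite direct sums is automatic once a pair is known to be a duality pair. Hence, in each of the three items it is enough to show that the displayed pair $(\mathfrak{X},\mathfrak{Y})$ is a duality pair and that the swapped pair $(\mathfrak{Y},\mathfrak{X})$ is a duality pair: the first assertion is one of the equivalences of Theorem \ref{thm1} applied to $\Delta$, and the second will be the corresponding equivalence applied to $\Delta^{\mathrm{op}}$.

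For the swapped statement I would pass to the opposite ring. Transposing the two off-diagonal bimodules shows that $\Delta^{\mathrm{op}}$ is isomorphic to the Morita ring $\left(\begin{smallmatrix} A^{\mathrm{op}} & {}_{A^{\mathrm{op}}}M_{B^{\mathrm{op}}} \\ {}_{B^{\mathrm{op}}}N_{A^{\mathrm{op}}} & B^{\mathrm{op}} \end{smallmatrix}\right)$, which is again of the admissible type because the conditions $M\oo_{A}N=0=N\oo_{B}M$ are symmetric in $M$ and $N$. Under the identifications of right $\Delta$-modules with $\mathcal{N}(\Delta)$ and of left $\Delta^{\mathrm{op}}$-modules with $\mathcal{M}(\Delta^{\mathrm{op}})$ (and, dually, of left $\Delta$-modules with right $\Delta^{\mathrm{op}}$-modules), a right $\Delta$-module is literally the same tuple read as a left $\Delta^{\mathrm{op}}$-module, with the maps $\widetilde{f},\widetilde{g}$ going over to the corresponding tilde-maps for $\Delta^{\mathrm{op}}$; consequently the classes correspond as $\mathfrak{A}_{\mathcal{C}_{2},\mathcal{D}_{2}}=\mathfrak{A}^{\mathcal{C}_{2}}_{\mathcal{D}_{2}}$, $\mathfrak{B}_{\mathcal{C}_{2},\mathcal{D}_{2}}=\mathfrak{B}^{\mathcal{C}_{2}}_{\mathcal{D}_{2}}$, $\mathfrak{J}_{\mathcal{C}_{2},\mathcal{D}_{2}}=\mathfrak{J}^{\mathcal{C}_{2}}_{\mathcal{D}_{2}}$ and, symmetrically, $\mathfrak{A}^{\mathcal{C}_{1}}_{\mathcal{D}_{1}}=\mathfrak{A}_{\mathcal{C}_{1},\mathcal{D}_{1}}$, $\mathfrak{B}^{\mathcal{C}_{1}}_{\mathcal{D}_{1}}=\mathfrak{B}_{\mathcal{C}_{1},\mathcal{D}_{1}}$, $\mathfrak{J}^{\mathcal{C}_{1}}_{\mathcal{D}_{1}}=\mathfrak{J}_{\mathcal{C}_{1},\mathcal{D}_{1}}$, where in each equation the right-hand side is formed over $\Delta^{\mathrm{op}}$; moreover Lemma \ref{lem:character modules} applied to $\Delta^{\mathrm{op}}$ supplies the needed character formula. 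Finally, the finiteness hypotheses translate accordingly: the hypothesis of Theorem \ref{thm1}(4) for $\Delta^{\mathrm{op}}$ reads ``$N_{B}$ and $M_{A}$ finitely presented'', which is exactly the standing assumption of item (2).

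Now I combine. In item (1), Theorem \ref{thm1} $(1)\Leftrightarrow(3)$ over $\Delta$ gives that $(\mathfrak{A}^{\mathcal{C}_{1}}_{\mathcal{D}_{1}},\mathfrak{A}_{\mathcal{C}_{2},\mathcal{D}_{2}})$ is a duality pair iff $(\mathcal{C}_{1},\mathcal{C}_{2})$ and $(\mathcal{D}_{1},\mathcal{D}_{2})$ are, and the same equivalence over $\Delta^{\mathrm{op}}$ gives that $(\mathfrak{A}_{\mathcal{C}_{2},\mathcal{D}_{2}},\mathfrak{A}^{\mathcal{C}_{1}}_{\mathcal{D}_{1}})$ is a duality pair iff $(\mathcal{C}_{2},\mathcal{C}_{1})$ and $(\mathcal{D}_{2},\mathcal{D}_{1})$ are; taking the conjunction yields item (1). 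In item (2), Theorem \ref{thm1} $(1)\Leftrightarrow(2)$ over $\Delta$ (hypothesis-free) handles $(\mathfrak{B}^{\mathcal{C}_{1}}_{\mathcal{D}_{1}},\mathfrak{J}_{\mathcal{C}_{2},\mathcal{D}_{2}})$, and Theorem \ref{thm1} $(1)\Leftrightarrow(4)$ over $\Delta^{\mathrm{op}}$ --- legitimate because $N_{B},M_{A}$ are finitely presented --- handles the swapped pair, which over $\Delta^{\mathrm{op}}$ is $(\mathfrak{J}^{\mathcal{C}_{2}}_{\mathcal{D}_{2}},\mathfrak{B}_{\mathcal{C}_{1},\mathcal{D}_{1}})$; again the conjunction is the claimed symmetry. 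Item (3) is the mirror of (2): Theorem \ref{thm1} $(1)\Leftrightarrow(4)$ over $\Delta$ (legitimate since ${}_{A}N,{}_{B}M$ are finitely presented) for the displayed pair, and the hypothesis-free Theorem \ref{thm1} $(1)\Leftrightarrow(2)$ over $\Delta^{\mathrm{op}}$ for the swap.

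\emph{Main obstacle.} The homological content is entirely carried by Theorem \ref{thm1}; the only genuinely delicate part is the second paragraph --- verifying that the transposed ring is a Morita ring of the admissible form, that ``left over $\Delta^{\mathrm{op}}$ $=$ right over $\Delta$'' interchanges the functors $\mathrm{T},\mathrm{U},\mathrm{H}$ and the classes $\mathfrak{A},\mathfrak{B},\mathfrak{J}$ exactly as stated, and, above all, that the single finiteness-dependent step in each of items (2) and (3) (the swapped direction in (2), the displayed direction in (3)) is covered by precisely the finitely-presented hypothesis imposed there, the complementary direction needing only the finiteness-free equivalence Theorem \ref{thm1}(2). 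If one prefers to bypass $\Delta^{\mathrm{op}}$, one can instead prove right-module analogues of Lemma \ref{lem2} and Theorem \ref{thm1} by hand and invoke those; this is routine but appreciably longer.
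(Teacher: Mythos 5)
Your proof is correct and takes essentially the same route as the paper, whose entire proof of this corollary is the single line ``It follows from Theorem \ref{thm1}.'' Your passage to $\Delta^{\mathrm{op}}$ simply makes explicit the right-module analogue of Theorem \ref{thm1} (together with the check that the finiteness hypotheses in items (2) and (3) translate correctly) that the paper leaves implicit.
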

\begin{proof}
It follows from Theorem \ref{thm1}.
\end{proof}

Perfect duality pairs can produce perfect cotorsion pairs by \cite{HP}. In the following, we consider when a duality pairs over $\Delta$ is perfect.

\begin{theorem}\label{perfect} Let $\mathcal{C}_{1}$ (resp., $\mathcal{C}_{2}$) be a class of left (resp., right) $A$-modules and $\mathcal{D}_{1}$ (resp., $\mathcal{D}_{2}$) be a class of left (resp., right) $B$-modules.
\begin{enumerate}
\item  Suppose that $\Tor_{1}^{B}(N,D)=0=\Tor_{1}^{A}(M,C)$ for any $D\in \mathcal{D}_{1}$ and any $C\in \mathcal{C}_{1}$.
    Then
    $(\mathfrak{B}^{\mathcal{C}_{1}}_{\mathcal{D}_{1}},
\mathfrak{J}_{{\mathcal{C}_{2}},{\mathcal{D}_{2}}})$ is a perfect duality pairs if and only if $(\mathcal{C}_{1},\mathcal{C}_{2})$ and $(\mathcal{D}_{1},\mathcal{D}_{2})$ are perfect duality pairs.

\item
$(\mathfrak{A}^{\mathcal{C}_{1}}_{\mathcal{D}_{1}},\mathfrak{A}_{{\mathcal{C}_{2}},{\mathcal{D}_{2}}})$ is a perfect duality pair
if and only if $_{B}M\in \mathcal{D}_{1}$ and $_{A}N\in \mathcal{C}_{1},$ $(\mathcal{C}_{1},\mathcal{C}_{2})$ and $(\mathcal{D}_{1},\mathcal{D}_{2})$ are perfect duality pairs.
\end{enumerate}
\end{theorem}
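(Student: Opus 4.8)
\textbf{Proof proposal for Theorem \ref{perfect}.}

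The plan is to establish both parts by combining Theorem \ref{thm1} (which already handles the duality-pair axioms) with a careful analysis of the two additional requirements in the definition of a \emph{perfect} duality pair, namely that the left-hand class contains the regular module and is closed under direct sums and under extensions. For part (1), Theorem \ref{thm1} gives the equivalence ``$(\mathcal{C}_1,\mathcal{C}_2)$ and $(\mathcal{D}_1,\mathcal{D}_2)$ are duality pairs $\iff$ $(\mathfrak{B}^{\mathcal{C}_1}_{\mathcal{D}_1},\mathfrak{J}_{\mathcal{C}_2,\mathcal{D}_2})$ is a duality pair,'' so it remains to check that under the hypothesis $\Tor_1^B(N,D)=0=\Tor_1^A(M,C)$ for $D\in\mathcal{D}_1$, $C\in\mathcal{C}_1$, the class $\mathfrak{B}^{\mathcal{C}_1}_{\mathcal{D}_1}$ has the three perfectness properties precisely when both $\mathcal{C}_1$ and $\mathcal{D}_1$ do. For the regular module, recall that $_\Delta\Delta$ corresponds to $(A,M\oo_A A,1,0)\oplus(N\oo_B B,B,0,1)=\mathrm{T}_A(A)\oplus\mathrm{T}_B(B)$, so by Lemma \ref{lem2}(1) and (1') it lies in $\mathfrak{B}^{\mathcal{C}_1}_{\mathcal{D}_1}$ exactly when $A\in\mathcal{C}_1$ and $B\in\mathcal{D}_1$; conversely, using the $(X,0,0,0)$ and $(0,Y,0,0)$ trick together with the fact that $\mathfrak{B}^{\mathcal{C}_1}_{\mathcal{D}_1}$ is closed under direct summands (Remark \ref{Remark1}, since it is the left half of a duality pair), one recovers $A\in\mathcal{C}_1$, $B\in\mathcal{D}_1$ from $_\Delta\Delta\in\mathfrak{B}^{\mathcal{C}_1}_{\mathcal{D}_1}$.

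For closure under direct sums and extensions in part (1), the Tor-vanishing hypothesis is exactly what is needed to keep the defining short exact sequences $0\to M\oo_A X\xrightarrow{f}Y\to Y/\im f\to 0$ and $0\to N\oo_B Y\xrightarrow{g}X\to X/\im g\to 0$ exact after the relevant operations. For a direct sum $\bigoplus_i(X_i,Y_i,f_i,g_i)=(\bigoplus X_i,\bigoplus Y_i,\oplus f_i,\oplus g_i)$, the maps $\oplus f_i$ and $\oplus g_i$ stay monic and the cokernels are $\bigoplus(X_i/\im g_i)$ and $\bigoplus(Y_i/\im f_i)$, which lie in $\mathcal{C}_1$, $\mathcal{D}_1$ when those classes are closed under direct sums; no Tor hypothesis is needed here. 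For extensions, given $0\to(X',Y',f',g')\to(X,Y,f,g)\to(X'',Y'',f'',g'')\to0$ with the outer terms in $\mathfrak{B}^{\mathcal{C}_1}_{\mathcal{D}_1}$, one tensors the $A$-row $0\to X'\to X\to X''\to0$ with $M$ (resp.\ the $B$-row with $N$); here $\Tor_1^A(M,X'')=0$ fails in general for arbitrary $X''$, but one uses that $X''$ sits in $0\to N\oo_B Y''\to X''\to X''/\im g''\to0$ with $X''/\im g''\in\mathcal{C}_1$ and $N\oo_B Y''$ a quotient situation controlled by $Y''/\im f''\in\mathcal{D}_1$ via $0\to N\oo_B M\oo_A X''\to N\oo_B Y''\to N\oo_B(Y''/\im f'')\to0$, and $N\oo_B M=0$, so $N\oo_B Y''\cong N\oo_B(Y''/\im f'')$ with $\Tor_1^B(N,Y''/\im f'')=0$ by hypothesis; chasing this through the $3\times3$ diagram shows $f,g$ are monic with cokernels extensions of modules in $\mathcal{C}_1$ (resp.\ $\mathcal{D}_1$), hence in $\mathcal{C}_1$, $\mathcal{D}_1$ by closure under extensions. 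The converse directions use that $\mathrm{T}_A$ and $\mathrm{T}_B$ are exact and commute with direct sums, transporting extensions and sums in $\mathfrak{B}^{\mathcal{C}_1}_{\mathcal{D}_1}$ back to $\mathcal{C}_1$ and $\mathcal{D}_1$ via Lemma \ref{lem2}(1),(1').

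For part (2), the analysis is cleaner because $\mathfrak{A}^{\mathcal{C}_1}_{\mathcal{D}_1}=\{(X,Y,f,g):X\in\mathcal{C}_1,Y\in\mathcal{D}_1\}$ imposes no condition on $f,g$. Closure under direct sums and extensions of $\mathfrak{A}^{\mathcal{C}_1}_{\mathcal{D}_1}$ is then equivalent, componentwise and with no Tor hypothesis, to the same closure for $\mathcal{C}_1$ and $\mathcal{D}_1$ (using exactness of the forgetful functors $\mathrm{U}_A,\mathrm{U}_B$ in one direction and the embeddings $X\mapsto(X,0,0,0)$, $Y\mapsto(0,Y,0,0)$ in the other). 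The one genuinely new point is the regular module: $_\Delta\Delta=(A,M\oo_A A,1,0)\oplus(N\oo_B B,B,0,1)$ lies in $\mathfrak{A}^{\mathcal{C}_1}_{\mathcal{D}_1}$ iff $A\in\mathcal{C}_1$, $M\oo_A A\cong {_BM}\in\mathcal{D}_1$, $N\oo_B B\cong{_AN}\in\mathcal{C}_1$ and $B\in\mathcal{D}_1$, and using closure of $\mathcal{C}_1,\mathcal{D}_1$ under direct summands (Remark \ref{Remark1}) this is equivalent to: $(\mathcal{C}_1,\mathcal{C}_2)$ and $(\mathcal{D}_1,\mathcal{D}_2)$ are perfect (which already forces $A\in\mathcal{C}_1$, $B\in\mathcal{D}_1$) together with the extra conditions $_BM\in\mathcal{D}_1$ and $_AN\in\mathcal{C}_1$. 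Assembling these gives the stated equivalence. I expect the main obstacle to be the extension-closure argument in part (1): one must run a $3\times3$-lemma diagram chase in the functor category $\mathcal{M}(\Delta)$ while simultaneously tracking the monomorphism conditions on $f,g$ and the cokernel membership, and it is there that the hypothesis $N\oo_B M=0$ together with the Tor-vanishing is used in an essential, slightly delicate way; the rest is routine transport along the exact functors $\mathrm{T}_A,\mathrm{T}_B,\mathrm{U}_A,\mathrm{U}_B$ and Lemma \ref{lem2}.
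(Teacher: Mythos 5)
Your overall architecture (reduce to the three perfectness conditions via Theorem \ref{thm1}, use the decomposition $_\Delta\Delta=\mathrm{T}_A(A)\oplus\mathrm{T}_B(B)$, transport via $\mathrm{T}_A,\mathrm{T}_B$ and Lemma \ref{lem2}, and a diagram chase for extensions) is the paper's, but your treatment of extension--closure in part (1) has a genuine gap: you place the $\Tor$ hypothesis in the wrong direction. In the direction ``$(\mathfrak{B}^{\mathcal{C}_1}_{\mathcal{D}_1},\mathfrak{J}_{\mathcal{C}_2,\mathcal{D}_2})$ perfect $\Rightarrow$ $\mathcal{C}_1$ (and $\mathcal{D}_1$) closed under extensions'' you simply assert that $\mathrm{T}_A$ and $\mathrm{T}_B$ are exact. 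They are only right exact: for $0\to X_1\to X_2\to X_3\to 0$ with $X_1,X_3\in\mathcal{C}_1$, the exactness of $0\to\mathrm{T}_A(X_1)\to\mathrm{T}_A(X_2)\to\mathrm{T}_A(X_3)\to0$ requires $\Tor_1^A(M,X_3)=0$, i.e.\ exactly the stated hypothesis $\Tor_1^A(M,C)=0$ for $C\in\mathcal{C}_1$ (and dually $\Tor_1^B(N,D)=0$). This is precisely where the paper invokes the hypothesis; as written, your step is unjustified, and without the hypothesis it is false in general. Conversely, in the direction ``$\mathcal{C}_1,\mathcal{D}_1$ closed under extensions $\Rightarrow$ $\mathfrak{B}^{\mathcal{C}_1}_{\mathcal{D}_1}$ closed under extensions,'' where you claim the $\Tor$ hypothesis is used ``in an essential, slightly delicate way,'' your derivation does not go through: from $0\to N\oo_{B}Y''\to X''\to X''/\im(g'')\to0$ and $N\oo_{B}Y''\cong N\oo_{B}(Y''/\im(f''))$ you only control $\Tor_1^B(N,Y''/\im(f''))$, which says nothing about $\Tor_1^A\bigl(M,N\oo_{B}(Y''/\im(f''))\bigr)$, so $\Tor_1^A(M,X'')=0$ does not follow and the tensored row need not be left exact. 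Fortunately no left exactness is needed there: the paper uses only the right-exact rows $M\oo_{A}X_1\to M\oo_{A}X_2\to M\oo_{A}X_3\to 0$ (resp.\ with $N$) mapped to the short exact rows $0\to Y_1\to Y_2\to Y_3\to 0$, and the snake lemma with $\ke(f_1)=\ke(f_3)=0$ gives $f_2,g_2$ monic together with short exact cokernel sequences, whence the cokernels lie in $\mathcal{D}_1$, $\mathcal{C}_1$ by extension closure. So in your write-up the hypothesis is consumed where it is neither sufficient nor needed, and omitted where it is indispensable; the latter is the real gap to repair.

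Two smaller points. For the regular module in part (1), the ``$(X,0,0,0)$ trick'' does not apply to $\mathfrak{B}^{\mathcal{C}_1}_{\mathcal{D}_1}$: $(X,0,0,0)$ lies there only if $M\oo_{A}X=0$, since $f\colon M\oo_{A}X\to 0$ must be monic; the correct argument is the one you also mention, namely closure of $\mathfrak{B}^{\mathcal{C}_1}_{\mathcal{D}_1}$ under direct summands (Remark \ref{Remark1}) applied to the summands $\mathrm{T}_A(A)$ and $\mathrm{T}_B(B)$, followed by Lemma \ref{lem2}(1),(1'). Your part (2) (componentwise closure via $\mathrm{U}_A,\mathrm{U}_B$ and the embeddings $X\mapsto(X,0,0,0)$, $Y\mapsto(0,Y,0,0)$, plus reading off $A,{_AN}\in\mathcal{C}_1$ and $B,{_BM}\in\mathcal{D}_1$ from $_\Delta\Delta\in\mathfrak{A}^{\mathcal{C}_1}_{\mathcal{D}_1}$ by summand closure) matches the paper and is fine.
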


\begin{proof}
(1) $``\Rightarrow"$. Note that $_{\Delta}\Delta=\mathrm{T}_{A}(A)\oplus\mathrm{T}_{B}(B)$. Since $_{\Delta}\Delta\in \mathfrak{B}^{\mathcal{C}_{1}}_{\mathcal{D}_{1}}$, we have $\mathrm{T}_{A}(A)$ and $\mathrm{T}_{B}(B)$ are in $\mathfrak{B}^{\mathcal{C}_{1}}_{\mathcal{D}_{1}}$ by Remark \ref{Remark1}.
It follows from Lemma \ref{lem2} that $A\in \mathcal{C}_{1}$ and $B\in \mathcal{D}_{1}.$

Let $\{X_{i}\}_{i\in I}$ be a family of left $A$-modules with $X_{i}\in \mathcal{C}_{1}.$
Then $\mathrm{T}_{A}(X_{i})\in \mathfrak{B}^{\mathcal{C}_{1}}_{\mathcal{D}_{1}}$.
Hence $\ds_{i\in I}\mathrm{T}_{A}(X_{i})\cong\mathrm{T}_{A}(\ds_{i\in I}X_{i})\in\mathfrak{B}^{\mathcal{C}_{1}}_{\mathcal{D}_{1}}.$
Thus $\ds_{i\in I}X_{i}\in \mathcal{C}_{1}$ by Lemma \ref{lem2}(1).

Let $$0\ra X_{1}\ra X_{2}\ra X_{3}\ra 0$$ be an exact sequence of left $A$-modules with $X_{1}\in \mathcal{C}_{1}$ and $X_{3}\in \mathcal{C}_{1}.$
Since $\Tor_{1}^{A}(M,C)=0$ for any $C\in \mathcal{C}_{1}$, we have the following exact sequence of left $\Delta$-modules
$$0\ra \mathrm{T}_{A}(X_{1})\ra \mathrm{T}_{A}(X_{2})\ra \mathrm{T}_{A}(X_{3})\ra 0.$$
Since $\mathrm{T}_{A}(X_{1})$ and $\mathrm{T}_{A}(X_{3})$ are in $\mathfrak{B}^{\mathcal{C}_{1}}_{\mathcal{D}_{1}},$ $\mathrm{T}_{A}(X_{2})\in \mathfrak{B}^{\mathcal{C}_{1}}_{\mathcal{D}_{1}}$ as $\mathfrak{B}^{\mathcal{C}_{1}}_{\mathcal{D}_{1}}$ is closed under extensions.
It follows from Lemma \ref{lem2}(1) that $X_{2}\in \mathcal{C}_{1}.$
Hence $(\mathcal{C}_{1},\mathcal{C}_{2})$ is a perfect duality pair.

Similarly, we can show that $(\mathcal{D}_{1},\mathcal{D}_{2})$ is a perfect duality pair.

$``\Leftarrow"$.  Since $A\in \mathcal{C}_{1}$ and $B\in \mathcal{D}_{1}$, we have $$_{\Delta}\Delta=\mathrm{T}_{A}(A)\oplus\mathrm{T}_{B}(B)\in \mathfrak{B}^{\mathcal{C}_{1}}_{\mathcal{D}_{1}}.$$
Let $\{(X_{i},Y_{i},f_{i},g_{i})\}_{i\in I}$ be a family of left $\Delta$-modules with $(X_{i},Y_{i},f_{i},g_{i})\in \mathfrak{B}^{\mathcal{C}_{1}}_{\mathcal{D}_{1}}.$
Then we get the following exact sequences
$$0\ra M\oo_{A}X_{i}\xrightarrow{f_{i}}Y_{i}\ra Y_{i}/\im(f_{i})\ra 0$$
and
$$0\ra N\oo_{B}Y_{i}\xrightarrow{g_{i}}X_{i}\ra X_{i}/\im(g_{i})\ra 0$$
with $Y_{i}/\im(f_{i})\in \mathcal{D}_{1}$ and $X_{i}/\im(g_{i})\in \mathcal{C}_{1}$.
Consider the following commutative diagrams

$$\xymatrix{0\ar[r]&M\oo_{A}(\ds_{i\in I}X_{i})\ar[r]^{}\ar[d]_{\cong}&\ds_{i\in I}Y_{i}\ar[r]^{}\ar@{=}[d]&\ds_{i\in I}Y_{i}/\im(\ds_{i\in I}f_{i})\ar[r]\ar[d]_{}&0\\
 0\ar[r]&\ds_{i\in I}(M\oo_{A}X_{i})\ar[r]^{}&\ds_{i\in I}Y_{i}\ar[r]&\ds_{i\in I}(Y_{i}/\im(f_{i}))\ar[r]&0}$$
and
$$\xymatrix{0\ar[r]&N\oo_{B}(\ds_{i\in I}Y_{i})\ar[r]^{}\ar[d]_{\cong}&\ds_{i\in I}X_{i}\ar[r]^{}\ar@{=}[d]&\ds_{i\in I}X_{i}/\im(\ds_{i\in I}g_{i})\ar[r]\ar[d]_{}&0\\
 0\ar[r]&\ds_{i\in I}(N\oo_{B}Y_{i})\ar[r]^{}&\ds_{i\in I}X_{i}\ar[r]&\ds_{i\in I}(X_{i}/\im(g_{i}))\ar[r]&0.}$$
 Thus $$\ds_{i\in I}Y_{i}/\im(\ds_{i\in I}f_{i})\cong \ds_{i\in I}(Y_{i}/\im(f_{i}))\in \mathcal{D}_{1}$$ and $$\ds_{i\in I}X_{i}/\im(\ds_{i\in I}g_{i})\cong \ds_{i\in I}(X_{i}/\im(g_{i}))\in \mathcal{C}_{1}$$ as $\mathcal{C}_{1}$ and $D_{1}$ are closed under direct sums.
 Thus $\ds_{i\in I}(X_{i},Y_{i},f_{i},g_{i})\in\mathfrak{B}^{\mathcal{C}_{1}}_{\mathcal{D}_{1}} $.

 Let $$0\ra (X_{1},Y_{1},f_{1},g_{1})\ra (X_{2},Y_{2},f_{2},g_{2})\ra (X_{3},Y_{3},f_{3},g_{3})\ra 0$$ be an exact sequence of left $\Delta$-modules
 with $(X_{1},Y_{1},f_{1},g_{1})\in \mathfrak{B}^{\mathcal{C}_{1}}_{\mathcal{D}_{1}}$ and $(X_{3},Y_{3},f_{3},g_{3})\in \mathfrak{B}^{\mathcal{C}_{1}}_{\mathcal{D}_{1}}.$
 Consider the following commutative diagrams with exact rows
 $$\xymatrix{
 &M\oo_{B}X_{1}\ar[r]^{}\ar[d]_{f_{1}}&M\oo_{A}X_{2}\ar[r]^{}\ar[d]_{f_{2}}&M\oo_{A}X_{3}\ar[r]\ar[d]_{f_{3}}&0\\
 0\ar[r]&Y_{1}\ar[r]^{}&Y_{2}\ar[r]&Y_{3}\ar[r]&0\\
 }$$
 and
$$\xymatrix{
 &N\oo_{B}Y_{1}\ar[r]^{}\ar[d]_{g_{1}}&N\oo_{B}Y_{2}\ar[r]^{}\ar[d]_{g_{2}}&N\oo_{B}Y_{3}\ar[r]\ar[d]_{g_{3}}&0\\
 0\ar[r]&X_{1}\ar[r]^{}&X_{2}\ar[r]&X_{3}\ar[r]&0\\
 }$$
Since $(X_{1},Y_{1},f_{1},g_{1})$ and $(X_{3},Y_{3},f_{3},g_{3})$ are in $ \mathfrak{B}^{\mathcal{C}_{1}}_{\mathcal{D}_{1}},$ $f_{1},g_{1},f_{3}$ and $g_{3}$ are monomorphisms.
By the Snake Lemma, we get that $f_{2}$ and $g_{2}$ are monomorphisms,
 $$0\ra Y_{1}/\im(f_{1})\ra Y_{2}/\im(f_{2})\ra Y_{3}/\im(f_{3})\ra 0$$
 and
  $$0\ra X_{1}/\im(g_{1})\ra X_{2}/\im(g_{2})\ra X_{3}/\im(g_{3})\ra 0$$
are exact.
Since $Y_{1}/\im(f_{1})$ and $Y_{3}/\im(f_{3})$ are in $\mathcal{D}_{1}$, $Y_{2}/\im(f_{2})\in \mathcal{D}_{1}$ as $\mathcal{D}_{1}$ is closed under extensions.
Similarly, we have $X_{2}/\im(g_{2})\in  \mathcal{C}_{1}.$
Thus $(X_{2},Y_{2},f_{2},g_{2})\in\mathfrak{B}^{\mathcal{C}_{1}}_{\mathcal{D}_{1}}.$
Hence $(\mathfrak{B}^{\mathcal{C}_{1}}_{\mathcal{D}_{1}},
\mathfrak{J}_{{\mathcal{C}_{2}},{\mathcal{D}_{2}}})$ is a perfect duality pairs.

(2). It is easy to see that $\mathcal{C}_{1}$ and $\mathcal{D}_{1}$ are closed under extensions and direct sums if and only if $\mathfrak{A}^{\mathcal{C}_{1}}_{\mathcal{D}_{1}}$ is closed under extensions and direct sums.

$ ``\Rightarrow".$ Since $_{\Delta}\Delta=\mathrm{T}_{A}(A)\oplus\mathrm{T}_{B}(B)\in \mathfrak{A}^{\mathcal{C}_{1}}_{\mathcal{D}_{1}},$ we have $A\oplus (N\oo_{B}B)\in \mathcal{C}_{1}$ and $B\oplus (M\oo_{A}A)\in \mathcal{D}_{1}$.
So $A\in \mathcal{C}_{1}$, $_{A}N\in \mathcal{C}_{1}$, $B\in \mathcal{D}_{1}$ and $_{B}M\in \mathcal{D}_{1}$.
Hence $(\mathcal{C}_{1},\mathcal{C}_{2})$ and $(\mathcal{D}_{1},\mathcal{D}_{2})$ are perfect duality pairs.

$``\Leftarrow".$ Since $(\mathcal{C}_{1},\mathcal{C}_{2})$ and $(\mathcal{D}_{1},\mathcal{D}_{2})$ are perfect duality pairs, we have $A\in \mathcal{C}_{1}$ and $B\in \mathcal{D}_{1}.$
Note that ${_{B}M}\in \mathcal{D}_{1}$ and ${_{A}N}\in \mathcal{C}_{1}$.
It follows that $\mathrm{T}_{A}(A)$ and $\mathrm{T}_{B}(B)$ are in
$\mathfrak{A}^{\mathcal{C}_{1}}_{\mathcal{D}_{1}}.$
Hence $_{\Delta}\Delta=\mathrm{T}_{A}(A)\oplus\mathrm{T}_{B}(B)$ is in
$\mathfrak{A}^{\mathcal{C}_{1}}_{\mathcal{D}_{1}}$ as $\mathcal{C}_{1}$ and $\mathcal{D}_{1}$ are closed under finite direct sums by Remark \ref{Remark1}.
So $(\mathfrak{A}^{\mathcal{C}_{1}}_{\mathcal{D}_{1}},
\mathfrak{A}_{{\mathcal{C}_{2}},{\mathcal{D}_{2}}})$ is a perfect duality pair.
\end{proof}

Recall from \cite{Gillespie} that a duality pair $(\mathcal{X},\mathcal{Y})$ of $R$-modules is complete if $(\mathcal{X},\mathcal{Y})$ is a symmetric duality pair with $(\mathcal{X},\mathcal{Y})$ being a perfect duality pair.

\begin{corollary}\label{complete} Let $\mathcal{C}_{1}$ (resp., $\mathcal{C}_{2}$) be a class of left (resp., right) $A$-modules and $\mathcal{D}_{1}$ (resp., $\mathcal{D}_{2}$) be a class of left (resp., right) $B$-modules.
\begin{enumerate}
\item If $N_{B}$ and $M_{A}$ are finitely generated projective modules, then $(\mathfrak{B}^{\mathcal{C}_{1}}_{\mathcal{D}_{1}},
\mathfrak{J}_{{\mathcal{C}_{2}},{\mathcal{D}_{2}}})$ is a complete duality pair if and only if $(\mathcal{C}_{1},\mathcal{C}_{2})$ and $(\mathcal{D}_{1},\mathcal{D}_{2})$ are complete duality pairs.

\item  $(\mathfrak{A}^{\mathcal{C}_{1}}_{\mathcal{D}_{1}},
\mathfrak{A}_{{\mathcal{C}_{2}},{\mathcal{D}_{2}}})$ is a complete duality pair if and only if ${ _{B}M}\in \mathcal{D}_{1}$ and ${_{A}N}\in \mathcal{C}_{1},$ $(\mathcal{C}_{1},\mathcal{C}_{2})$ and $(\mathcal{D}_{1},\mathcal{D}_{2})$ are complete duality pairs.
\end{enumerate}
\end{corollary}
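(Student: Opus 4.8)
The plan is to exploit the definition directly: a duality pair is \emph{complete} exactly when it is simultaneously \emph{symmetric} and \emph{perfect}, so it suffices to combine the symmetric characterization from Corollary~\ref{symmetric} with the perfect characterization from Theorem~\ref{perfect}, after checking that the hypothesis ``$N_{B}$ and $M_{A}$ finitely generated projective'' supplies exactly the inputs those two results require.

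For part (1), I would first note that a finitely generated projective module is both finitely presented and flat. Flatness of $M_{A}$ yields $\Tor_{1}^{A}(M,C)=0$ for every left $A$-module $C$, hence in particular for every $C\in\mathcal{C}_{1}$; likewise flatness of $N_{B}$ yields $\Tor_{1}^{B}(N,D)=0$ for every $D\in\mathcal{D}_{1}$. Thus the standing hypothesis of Theorem~\ref{perfect}(1) is satisfied, and that theorem gives: $(\mathfrak{B}^{\mathcal{C}_{1}}_{\mathcal{D}_{1}},\mathfrak{J}_{\mathcal{C}_{2},\mathcal{D}_{2}})$ is a perfect duality pair if and only if $(\mathcal{C}_{1},\mathcal{C}_{2})$ and $(\mathcal{D}_{1},\mathcal{D}_{2})$ are perfect duality pairs. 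Since $M_{A}$ and $N_{B}$ are finitely presented, Corollary~\ref{symmetric}(2) applies and gives: $(\mathfrak{B}^{\mathcal{C}_{1}}_{\mathcal{D}_{1}},\mathfrak{J}_{\mathcal{C}_{2},\mathcal{D}_{2}})$ is symmetric if and only if $(\mathcal{C}_{1},\mathcal{C}_{2})$ and $(\mathcal{D}_{1},\mathcal{D}_{2})$ are symmetric. Intersecting the two equivalences shows that $(\mathfrak{B}^{\mathcal{C}_{1}}_{\mathcal{D}_{1}},\mathfrak{J}_{\mathcal{C}_{2},\mathcal{D}_{2}})$ is complete precisely when each of $(\mathcal{C}_{1},\mathcal{C}_{2})$ and $(\mathcal{D}_{1},\mathcal{D}_{2})$ is both symmetric and perfect, i.e. complete.

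Part (2) needs no finiteness hypothesis at all. By Corollary~\ref{symmetric}(1), $(\mathfrak{A}^{\mathcal{C}_{1}}_{\mathcal{D}_{1}},\mathfrak{A}_{\mathcal{C}_{2},\mathcal{D}_{2}})$ is symmetric if and only if $(\mathcal{C}_{1},\mathcal{C}_{2})$ and $(\mathcal{D}_{1},\mathcal{D}_{2})$ are symmetric; by Theorem~\ref{perfect}(2), it is perfect if and only if $_{B}M\in\mathcal{D}_{1}$, $_{A}N\in\mathcal{C}_{1}$, and $(\mathcal{C}_{1},\mathcal{C}_{2})$, $(\mathcal{D}_{1},\mathcal{D}_{2})$ are perfect. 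Taking the conjunction of these two characterizations yields that $(\mathfrak{A}^{\mathcal{C}_{1}}_{\mathcal{D}_{1}},\mathfrak{A}_{\mathcal{C}_{2},\mathcal{D}_{2}})$ is complete if and only if $_{B}M\in\mathcal{D}_{1}$, $_{A}N\in\mathcal{C}_{1}$, and $(\mathcal{C}_{1},\mathcal{C}_{2})$ and $(\mathcal{D}_{1},\mathcal{D}_{2})$ are complete.

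The argument is essentially bookkeeping, and I do not expect a genuine obstacle. The only point requiring care is the translation of ``$N_{B}$ and $M_{A}$ finitely generated projective'' into the two separate inputs needed by the cited results: flatness, which is what kills the $\Tor_{1}$ terms in Theorem~\ref{perfect}(1), and finite presentation, which is what licenses Corollary~\ref{symmetric}(2); one must also keep the sidedness conventions straight, so that $\Tor_{1}^{A}(M,-)$ is computed with $M$ regarded as the right $A$-module $M_{A}$ and $\Tor_{1}^{B}(N,-)$ with $N$ regarded as the right $B$-module $N_{B}$.
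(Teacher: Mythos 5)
Your proposal is correct and follows exactly the paper's route: the paper proves this corollary by citing Corollary \ref{symmetric} together with Theorem \ref{perfect}, and your argument just spells out the bookkeeping (finitely generated projective gives both the flatness needed to kill the $\Tor_1$ hypothesis in Theorem \ref{perfect}(1) and the finite presentation needed for Corollary \ref{symmetric}(2)).
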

\begin{proof}
It follows from Corollary \ref{symmetric} and Theorem \ref{perfect}.
\end{proof}

\section { \bf Gorenstein projective modules relative to duality pairs}\label{section:Gor-projective}

In this section, let $\Delta =\left(\begin{smallmatrix}  A & {_{A}}N_{B}\\  {_{B}}M_{A} & B \\\end{smallmatrix}\right)$ be a Morita ring such that $N_{B}$ and $M_{A}$ are finitely generated projective modules. We always assume that $(\mathcal{C}_{1},\mathcal{C}_{2})$ and $(\mathcal{D}_{1},\mathcal{D}_{2})$ are complete duality pairs over $A$ and $B$
respectively.
Then we have a complete duality pair $(\mathfrak{B}^{\mathcal{C}_{1}}_{\mathcal{D}_{1}},
\mathfrak{J}_{{\mathcal{C}_{2}},{\mathcal{D}_{2}}})$ over $\Delta =\left(\begin{smallmatrix}  A & {_{A}}N_{B}\\  {_{B}}M_{A} & B \\\end{smallmatrix}\right)$ by Corollary \ref{complete}.


\begin{definition}\cite[Definition 4.2]{Gillespie}\label{definition:4.1}
Let $\mathcal{L}$ (resp., $\mathcal{A}$) be a class of left (resp., right) $R$-modules over a ring $R$ such that $(\mathcal{L},\mathcal{A})$ is a complete duality pair of $R$-modules.
A left $R$-module $X$ is called Gorenstein $(\mathcal{L},\mathcal{A})$-projective if there exists an exact complex of projective left $R$-modules
$$\mathbb{P}:\cdots\ra P^{-1}\xrightarrow{d^{-1}} P^{0}\xrightarrow{d^{0}} P^{1}\ra\cdots$$
such that $\Hom_{R}(\mathbb{P}, L)$ is exact for any $L\in \mathcal{L}$ and that $X\cong \ker d^{0}.$
\end{definition}

The notion of Ding projective modules was first introduced by Ding, Li and Mao \cite{D} under the name of ``strongly Gorenstein flat modules''.
Later on, Gillespie \cite{Gillespie2} renamed these modules Ding projective modules.
For a ring $R$, a left $R$-module $M$ is called \emph{Ding projective} if there is an exact sequence of projective left $R$-modules
$$\mathbf{P}= \cdots\rightarrow P_{1}\rightarrow P_{0}\rightarrow P^{0}\rightarrow P^{1}\rightarrow\cdots$$
with $M\cong\mathrm{Ker}(P_{0}\rightarrow P^{0})$ such that $\mathrm{Hom}_{R}(\mathbf{P},F)$ is exact for any flat left $R$-module $F$.
For more details on Ding projective modules, we refer to \cite{D,Gillespie2,Liu,Mao1,Iacob}.
\begin{example}\label{ex1}
If $R$ is a right coherent ring, then $(\mathrm{Flat}(R),\mathrm{FP}\text{-}\mathrm{Inj}(R^{op}))$ is a complete duality pair by \cite[Theorem 1]{CS}, where $\mathrm{Flat}(R)$ denotes the class of flat left $R$-modules and
$\mathrm{FP}\text{-}\mathrm{Inj}(R^{op})$ denotes the class of $\mathrm{FP}$-injective right $R$-modules. Note that Gorenstein $(\mathrm{Flat}(R),\mathrm{FP}\text{-}\mathrm{Inj}(R^{op}))$-projective modules is exactly the Ding projective modules.
\end{example}

The following is the main result of this section which contains Theorem \ref{Gorenstein projective module'} in the introduction.
\begin{theorem}\label{Gorenstein projective module1}
Let $N_{B}$ and $M_{A}$ be finitely generated projective.

\begin{enumerate}
\item Assume that $N\oo_{B}D\in \mathcal{C}_{1}$ for any $D\in \mathcal{D}_{1}.$
If $X$ is a Gorenstein $(\mathcal{C}_{1},\mathcal{C}_{2})$-projective left $A$-module, then $\mathrm{T}_{A}(X)$ is a Gorenstein $(\mathfrak{B}^{\mathcal{C}_{1}}_{\mathcal{D}_{1}},
\mathfrak{J}_{{\mathcal{C}_{2}},{\mathcal{D}_{2}}})$-projective left $\Delta$-module.

\item Assume that $_{A}N$ and $_{B}M$ are projective modules, $M\oo_{A}C\in \mathcal{D}_{1}$ and $\mathrm{id}_{B}\Hom_{A}(N,C)<\infty$ for any $C\in \mathcal{C}_{1}$.
If $(X,Y,f,g)$ is a Gorenstein $(\mathfrak{B}^{\mathcal{C}_{1}}_{\mathcal{D}_{1}},
\mathfrak{J}_{{\mathcal{C}_{2}},{\mathcal{D}_{2}}})$-projective left $\Delta$-module, then $X$ is a Gorenstein $(\mathcal{C}_{1},\mathcal{C}_{2})$-projective left $A$-module.

\item Assume that $M\oo_{A}C\in \mathcal{D}_{1}$ for any $C\in \mathcal{C}_{1}.$
If $Y$ is a Gorenstein $(\mathcal{C}_{1},\mathcal{C}_{2})$-projective left $B$-module, then $\mathrm{T}_{B}(Y)$ is a Gorenstein $(\mathfrak{B}^{\mathcal{C}_{1}}_{\mathcal{D}_{1}},
\mathfrak{J}_{{\mathcal{C}_{2}},{\mathcal{D}_{2}}})$-projective left $\Delta$-module.

\item Assume that $_{A}N$ and $_{B}M$ are projective modules, $N\oo_{B}D\in \mathcal{C}_{1}$ and $\mathrm{id}_{A}\Hom_{B}(M,D)<\infty$ for any $D\in \mathcal{D}_{1}$.
If $(X,Y,f,g)$ is a Gorenstein $(\mathfrak{B}^{\mathcal{C}_{1}}_{\mathcal{D}_{1}},
\mathfrak{J}_{{\mathcal{C}_{2}},{\mathcal{D}_{2}}})$-projective left $\Delta$-module, then $Y$ is a Gorenstein $(\mathcal{D}_{1},\mathcal{D}_{2})$-projective left $B$-module.
\end{enumerate}
\end{theorem}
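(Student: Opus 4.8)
The plan is to prove all four parts by transporting exact complexes back and forth between module categories via the functors $\mathrm{T}_A, \mathrm{T}_B, \mathrm{U}_A, \mathrm{U}_B$, using the adjointness of Lemma \ref{lem:adjoint pairs} together with the characterizations of projectives (Lemma \ref{lem1}) and of the class $\mathfrak{B}^{\mathcal{C}_1}_{\mathcal{D}_1}$ from Lemma \ref{lem2}. For part (1), I would start with an exact complex $\mathbb{P}$ of projective left $A$-modules with $X\cong\ker d^0$ and $\Hom_A(\mathbb{P},C)$ exact for all $C\in\mathcal{C}_1$. Since $M_A$ is finitely generated projective (hence flat and, applied from the right, exact), $M\oo_A\mathbb{P}$ is an exact complex of projective left $B$-modules, so $\mathrm{T}_A(\mathbb{P})$ is an exact complex of projective left $\Delta$-modules by Lemma \ref{lem1}(1). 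Since $\mathrm{T}_A$ is exact and additive, $\mathrm{T}_A(X)\cong\ker\mathrm{T}_A(d^0)$. The remaining point is to show $\Hom_\Delta(\mathrm{T}_A(\mathbb{P}),L)$ is exact for every $L=(X',Y',f',g')\in\mathfrak{B}^{\mathcal{C}_1}_{\mathcal{D}_1}$. Here I would use the adjunction $(\mathrm{T}_A,\mathrm{U}_A)$: $\Hom_\Delta(\mathrm{T}_A(\mathbb{P}),L)\cong\Hom_A(\mathbb{P},\mathrm{U}_A(L))=\Hom_A(\mathbb{P},X')$; but $X'$ is only an extension of $X'/\im(g')\in\mathcal{C}_1$ by $N\oo_B Y'$, where $Y'$ is an extension of $Y'/\im(f')\in\mathcal{D}_1$ by $M\oo_A X'$ — this circularity in $X'$ and $Y'$ is the genuine difficulty (see below), and the hypothesis $N\oo_B D\in\mathcal{C}_1$ for all $D\in\mathcal{D}_1$ is exactly what is needed to break it: from the two defining short exact sequences of $L$ one gets $X'/\im(g')\in\mathcal{C}_1$, $Y'/\im(f')\in\mathcal{D}_1$, hence $M\oo_A X'$ lies in $\mathcal{D}_1$ up to the filtration, $N\oo_B Y'\in\mathcal{C}_1$, and therefore $X'$ has a finite filtration with quotients in $\mathcal{C}_1$; since $\Hom_A(\mathbb{P},-)$ takes short exact sequences with outer terms making $\Hom(\mathbb{P},-)$ exact to exact complexes (a dimension-shift / horseshoe argument), $\Hom_A(\mathbb{P},X')$ is exact. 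Part (3) is entirely symmetric, exchanging the roles of $A,B$, $M,N$, $\mathcal{C},\mathcal{D}$, and using $M\oo_A C\in\mathcal{D}_1$.

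For parts (2) and (4) — the converse directions — I would begin with a Gorenstein $(\mathfrak{B}^{\mathcal{C}_1}_{\mathcal{D}_1},\mathfrak{J}_{\mathcal{C}_2,\mathcal{D}_2})$-projective complex $\mathbb{Q}$ of projective left $\Delta$-modules with $(X,Y,f,g)\cong\ker d^0$. By Lemma \ref{lem1}(1) each term of $\mathbb{Q}$ is $\mathrm{T}_A(P)\oplus\mathrm{T}_B(P')$ with $P,P'$ projective, so $\mathrm{U}_A(\mathbb{Q})$ is an exact complex of modules of the form $P\oplus(N\oo_B P')$; since $_BN$ is assumed projective (part (4)) — or rather, for part (2), since $N\oo_B P'$ is projective whenever $_AN$ is projective and $P'$ is projective — $\mathrm{U}_A(\mathbb{Q})$ is an exact complex of projective left $A$-modules with $X\cong\ker\mathrm{U}_A(d^0)$. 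It then remains to verify $\Hom_A(\mathrm{U}_A(\mathbb{Q}),C)$ is exact for each $C\in\mathcal{C}_1$. The natural move is the other adjunction $(\mathrm{U}_A,\mathrm{H}_A)$: $\Hom_A(\mathrm{U}_A(\mathbb{Q}),C)\cong\Hom_\Delta(\mathbb{Q},\mathrm{H}_A(C))$, where $\mathrm{H}_A(C)=(C,\Hom_A(N,C),0,e_C)$. This object need not lie in $\mathfrak{B}^{\mathcal{C}_1}_{\mathcal{D}_1}$ directly, so here one needs the extra hypotheses: $M\oo_A C\in\mathcal{D}_1$ and $\mathrm{id}_B\Hom_A(N,C)<\infty$ are used to build a finite coresolution of $\mathrm{H}_A(C)$ by objects of $\mathfrak{B}^{\mathcal{C}_1}_{\mathcal{D}_1}$ together with injective $\Delta$-modules (against which $\Hom_\Delta(\mathbb{Q},-)$ is automatically exact, $\mathbb{Q}$ being a complex of projectives — wait, that is automatic only if the complex is exact, which it is), and then a finite dimension-shifting argument in the derived sense transfers exactness from those building blocks to $\mathrm{H}_A(C)$. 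Part (4) is the mirror image, swapping $A\leftrightarrow B$, using $\mathrm{H}_B$, $M\oo_A C$ replaced by $N\oo_B D$, and $\mathrm{id}_A\Hom_B(M,D)<\infty$.

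The main obstacle, in all four parts, is the \emph{mutual recursion} built into the definition of $\mathfrak{B}^{\mathcal{C}_1}_{\mathcal{D}_1}$: membership of $(X',Y',f',g')$ constrains $X'$ through $Y'$ and $Y'$ through $X'$, so one cannot simply read off that $X'$ or $Y'$ lies in $\mathcal{C}_1$ or $\mathcal{D}_1$; one only gets short exact sequences $0\to N\oo_B Y'\to X'\to X'/\im(g')\to 0$ and $0\to M\oo_A X'\to Y'\to Y'/\im(f')\to 0$ with the cokernels in the right classes. Unwinding this to conclude that $X'$ (resp.\ $Y'$) admits a finite filtration by modules in $\mathcal{C}_1$ (resp.\ $\mathcal{D}_1$) is precisely what the tensor-closure hypotheses $N\oo_B\mathcal{D}_1\subseteq\mathcal{C}_1$, $M\oo_A\mathcal{C}_1\subseteq\mathcal{D}_1$ are for, and the fact that $N\oo_B M=0=M\oo_A N$ makes the recursion terminate after one step (so the filtration has length $\le 2$). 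Once the filtration is in hand, exactness of $\Hom$ against it follows by the standard argument that the class of modules $L$ with $\Hom_A(\mathbb{P},L)$ exact is closed under extensions (for a totally acyclic complex $\mathbb{P}$ of projectives), together with the finiteness of injective dimension to handle the $\mathrm{H}_A(C)$ side via a bounded injective coresolution. I would carry out part (1) in full detail and then indicate parts (3), (2), (4) as symmetric or analogous, writing out only the points where the hypotheses $\mathrm{id}_B\Hom_A(N,C)<\infty$ and $_AN,\,_BM$ projective enter.
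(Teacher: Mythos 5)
Your overall route is the paper's own: for (1)/(3) apply $\mathrm{T}_A$ (resp.\ $\mathrm{T}_B$) to the totally acyclic complex and use the adjunction $(\mathrm{T}_A,\mathrm{U}_A)$ to reduce everything to exactness of $\Hom_A(\mathbb{P},X')$, and for (2)/(4) apply $\mathrm{U}_A$ and use $(\mathrm{U}_A,\mathrm{H}_A)$ to reduce to exactness of $\Hom_\Delta(\mathbb{T},\mathrm{H}_A(C))$. Part (1) as you sketch it is essentially the paper's proof, with one caveat: under the hypotheses of (1) you cannot (and need not) claim that $M\oo_A X'$ lies in $\mathcal{D}_1$; the only fact required, and the only one available, is that $N\oo_B Y'\cong N\oo_B\bigl(Y'/\im(f')\bigr)\in\mathcal{C}_1$, which follows from $N\oo_B M=0$ together with $N\oo_B\mathcal{D}_1\subseteq\mathcal{C}_1$, so that $X'$ is an extension of $X'/\im(g')\in\mathcal{C}_1$ by $N\oo_B Y'\in\mathcal{C}_1$ and the short exact sequence of Hom-complexes closes the argument. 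You do gesture at this via ``the recursion terminates because $N\oo_B M=0$,'' so (1) and (3) are fine.

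The genuine gap is in (2)/(4). After the correct reduction to $\Hom_\Delta(\mathbb{T},\mathrm{H}_A(C))$, you only assert that the hypotheses let one ``build a finite coresolution of $\mathrm{H}_A(C)$ by objects of $\mathfrak{B}^{\mathcal{C}_{1}}_{\mathcal{D}_{1}}$ together with injective $\Delta$-modules.'' No such coresolution is constructed, and taken literally this device does not exist: the injective dimension of $\mathrm{H}_A(C)$ is not controlled, and there is no evident monomorphism from $\mathrm{H}_A(C)$ into a $\mathfrak{B}$-object starting such a resolution. What actually works (and is the substance of the paper's proof) is a mixed left/right argument using the closure of the class $\{L:\Hom_\Delta(\mathbb{T},L)\ \text{exact}\}$ under extensions, kernels of epimorphisms \emph{and} cokernels of monomorphisms: (i) the exact sequence $0\ra(0,M\oo_A C,0,0)\ra\mathrm{T}_A(C)\ra(C,0,0,0)\ra 0$, whose first two terms lie in $\mathfrak{B}^{\mathcal{C}_{1}}_{\mathcal{D}_{1}}$ because $M\oo_A C\in\mathcal{D}_1$ and $N\oo_B M=0$, handles $(C,0,0,0)$ from the left; (ii) applying $\mathrm{H}_B$ to a bounded injective coresolution of $\Hom_A(N,C)$ — exactness of the resulting sequence uses $_BM$ projective, and the identification $\mathrm{H}_B(\Hom_A(N,C))=(0,\Hom_A(N,C),0,0)$ uses $\Hom_B(M,\Hom_A(N,C))\cong\Hom_A(N\oo_B M,C)=0$ — handles $(0,\Hom_A(N,C),0,0)$ by finite downward induction, injective $\Delta$-modules being harmless since $\mathbb{T}$ is exact; (iii) the extension $0\ra(C,0,0,0)\ra\mathrm{H}_A(C)\ra(0,\Hom_A(N,C),0,0)\ra 0$ then yields exactness of $\Hom_\Delta(\mathbb{T},\mathrm{H}_A(C))$, and the adjunction finishes. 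These explicit sequences, and the precise points where $_BM$ projective and $N\oo_B M=0$ enter, are exactly what your sketch leaves out, so as written the decisive step of (2)/(4) is missing rather than merely abbreviated.
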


%
%
%
%
%
%
\begin{proof}
We only need prove (1) and (2), and the proofs of (3) and (4) are similar.

(1). Let $_{A}X$ be a Gorenstein $(\mathcal{C}_{1},\mathcal{C}_{2})$-projective module.
Then there exists an exact sequence of projective left $A$-modules
$$\mathbb{P}:\cdots\ra P^{-1}\xrightarrow{d^{-1}} P^{0}\xrightarrow{d^{0}} P^{1}\ra \cdots$$
such that $\Hom_{R}(\mathbb{P}, C)$ is exact for any $C\in \mathcal{C}_{1}$ and that $X\cong \ker d^{0}.$
Since $M_{A}$ is projective by hypothesis,
we have the exact sequence of projective $\Delta$-modules
$$\mathrm{T}_{A}(\mathbb{P}):\cdots\ra \mathrm{T}_{A}(P^{-1})\xrightarrow{\mathrm{T}_{A}(d^{-1})} \mathrm{T}_{A}(P^{0})\xrightarrow{\mathrm{T}_{A}(d^{0})} \mathrm{T}_{A}(P^{1})\ra\cdots$$
such that $\mathrm{T}_{A}(X)\cong \ker(\mathrm{T}_{A}(d^{0}))$ by Lemma \ref{lem1}.
For any $(X',Y',f',g')\in \mathfrak{B}^{\mathcal{C}_{1}}_{\mathcal{D}_{1}},$
we have exact sequences
$$0\ra N\oo_{B}Y'\xrightarrow{g'} X'\ra X'/\im(g')\ra 0$$
of $A$-modules with $X'/\im(g')\in \mathcal{C}_{1}$
and
$$0\ra M\oo_{A}X'\xrightarrow{f'} Y'\ra Y'/\im(f')\ra 0$$
of $B$-modules with $Y'/\im(f')\in \mathcal{D}_{1}.$
Note that $N\oo_{B}M=0$.
It follows that $N\oo_{B}Y'\cong N\oo_{B}Y'/\im(f).$
So $N\oo_{B}Y'\in \mathcal{C}_{1}$ by hypothesis.
Since $\mathbb{P}$ is an exact complex of projective $A$-modules,
we get an exact sequence of complexes
$$0\ra \Hom_{A}(\mathbb{P},N\oo_{B}Y')\ra \Hom_{A}(\mathbb{P},X')\ra \Hom_{A}(\mathbb{P},X'/\im(g'))\ra 0.$$
Since $X$ is a Gorenstein $(\mathcal{C}_{1},\mathcal{C}_{2})$-projective module, it follows that $\Hom_{A}(\mathbb{P},N\oo_{B}Y')$ and $\Hom_{A}(\mathbb{P},X'/\im(g'))$
are exact.
Hence $\Hom_{A}(\mathbb{P},X')$ is exact.
Note that
\begin{align*}
 \Hom_{\Delta}(T_{A}(\mathbb{P}),(X',Y',f',g'))\cong\Hom_{A}(\mathbb{P},X')
 \end{align*} by Lemma \ref{lem:adjoint pairs}.
So $\mathrm{T}_{A}(X)$ is a Gorenstein $(\mathfrak{B}^{\mathcal{C}_{1}}_{\mathcal{D}_{1}},
\mathfrak{J}^{\mathcal{C}_{2}}_{\mathcal{D}_{2}})$-projective left $\Delta$-module, as desired.

(2). Let $(X,Y,f,g)$ be a Gorenstein $(\mathfrak{B}^{\mathcal{C}_{1}}_{\mathcal{D}_{1}},
\mathfrak{J}_{{\mathcal{C}_{2}},{\mathcal{D}_{2}}})$-projective left $\Delta$-module.
Then we have an exact sequence of projective left $\Delta$-modules
$$\mathbb{T}:\cdots\xrightarrow{}\mathrm{T}_{A}(P^{-1})\oplus\mathrm{T}_{B}(Q^{-1})\xrightarrow{d^{-1}}
\mathrm{T}_{A}(P^{0})\oplus\mathrm{T}_{B}(Q^{0})\xrightarrow{d^{0}}\mathrm{T}_{A}(P^{1})\oplus\mathrm{T}_{B}(Q^{1})\ra\cdots$$
such that $\Hom_{\Delta}(\mathbb{T},(X',Y',f',g'))$ is exact for any $(X',Y',f',g')\in \mathfrak{B}^{\mathcal{C}_{1}}_{\mathcal{D}_{1}}$ and $\ke d^{0}\cong (X,Y,f,g),$
where each $P^{i}$ is a projective left $A$-module and each $Q^{i}$ is a projective left $B$-module.
Then we have the following exact sequence
$$\mathrm{U}_{A}(\mathbb{T}):\cdots\ra P^{-1}\oplus (N\oo_{B}Q^{-1})\xrightarrow{\mathrm{U}_{A}(d^{-1})}P^{0}\oplus (N\oo_{B}Q^{0})\xrightarrow{\mathrm{U}_{A}(d^{0})} P^{1}\oplus (N\oo_{B}Q^{1})\ra\cdots$$
such that $\ke(\mathrm{U}_{A}(d^{0}))\cong X.$
Since $_{A}N$ is projective by hypothesis, each $N\oo_{B}Q^{i}$ is a projective left $A$-module.
Thus each $P^{i}\oplus (N\oo_{B}Q^{i})$ is a projective left $A$-module.
For any $C\in\mathcal{C}_{1},$
consider the following exact sequence of left $\Delta$-modules
$$0\ra (0,M\oo_{A}C,0,0)\ra (C,M\oo_{A}C,1,0)\ra (C,0,0,0)\ra 0.$$
Since $\mathbb{T}$ is an exact complex of projective left $\Delta$-modules,
we get an exact sequence of complexes
\begin{align*}
\scalebox{1}{\xymatrixcolsep{3pc}\xymatrix{
0\ra\Hom_{\Delta}(\mathbb{T},(0,M\oo_{A}C,0,0))\ra
\Hom_{\Delta}(\mathbb{T},(C,M\oo_{A}C,1,0))\ra
\Hom_{\Delta}(\mathbb{T},(C,0,0,0))\ra 0.}}
\end{align*}
Note that $M\oo_{A}C\in \mathcal{D}_{1}$ and $N\oo_{B}M=0$ by hypothesis, it follows that
$(0,M\oo_{A}C,0,0)$ and $(C,M\oo_{A}C,1,0)$ are in $\mathfrak{B}^{\mathcal{C}_{1}}_{\mathcal{D}_{1}}.$
Thus $\Hom_{\Delta}(\mathbb{T},(0,M\oo_{A}C,0,0))$ and $\Hom_{\Delta}(\mathbb{T},(C,M\oo_{A}C,1,0))$ are exact.
So $\Hom_{\Delta}(\mathbb{T},(C,0,0,0))$ is exact.
Since $\mathrm{id}_{B}\Hom_{A}(N,C)<\infty$ by hypothesis,
there is an exact sequence of left $B$-modules
$$0\ra \Hom_{A}(N,C)\ra J^{0}\ra J^{1}\ra \cdots \ra J^{n-1}\ra J^{n}\ra 0$$
for some nonnegative integer $n$ with each $J^{i}$ injective.
Then we have an exact sequence of left $\Delta$-modules
$$0\ra \mathrm{H}_{B}(\Hom_{A}(N,C))\ra \mathrm{H}_{B}(J^{0})\ra \mathrm{H}_{B}(J^{1})\ra \cdots \ra \mathrm{H}_{B}(J^{n-1})\ra \mathrm{H}_{B}(J^{n})\ra 0$$
as $_{B}M$ is projective.
Note from the adjunction isomorphism that $\Hom_{B}(M,\Hom_{A}(N,C))\cong
\Hom_{A}(N\oo_{B}M,C)$. Then from $N\oo_{B}M=0$ we have $\Hom_{B}(M,\Hom_{A}(N,C))=0.$
Thus $\mathrm{H}_{B}(\Hom_{A}(N,C))=(0,\Hom_{A}(N,C),0,0).$
Note from Lemma \ref{lem1}(2) that each $\mathrm{H}_{B}(J^{i})$ is an injective left $\Delta$-module.
This implies that $\Hom_{\Delta}(\mathbb{T},\mathrm{H}_{B}(J^{i}))$ is exact for $0\leq i\leq n$.
It follows that $\Hom_{\Delta}(\mathbb{T},(0,\Hom_{A}(N,C),0,0))$ is exact by induction.
One sees that the following sequence of left $\Delta$-modules
$$0\ra (C,0,0,0)\ra (C,\Hom_{A}(N,C),0,e_{C})\ra (0,\Hom_{A}(N,C),0,0)\ra 0$$
is exact in $\DMod.$
Thus we have the following exact sequence of complexes
\begin{multline*}
0\longrightarrow \Hom_{\Delta}(\mathbb{T},(C,0,0,0))\longrightarrow \Hom_{\Delta}(\mathbb{T},(C,\Hom_{A}(N,C),0,e_{C}))\longrightarrow\\
\Hom_{\Delta}(\mathbb{T},(0,\Hom_{A}(N,C),0,0))\longrightarrow 0
\end{multline*}
as $\mathbb{T}$ is an exact complex of projective left $\Delta$-modules.
 Note that $\Hom_{\Delta}(\mathbb{T},(C,0,0,0))$ and $\Hom_{\Delta}(\mathbb{T},(0,\Hom_{A}(N,C),0,0))$ are exact in $\DMod$.
So $\Hom_{\Delta}(\mathbb{T},(C,\Hom_{A}(N,C),0,e_{C}))$ is exact in $\DMod.$
By Lemma \ref{lem:adjoint pairs}, we have
 \begin{align*}
 \Hom_{A}(\mathrm{U}_{A}(\mathbb{T}),C)&\cong \Hom_{\Delta}(\mathbb{T},\mathrm{H}_{A}(C)) =\Hom_{\Delta}(\mathbb{T},(C,\Hom_{A}(N,C),0,e_{C})).
\end{align*}
Thus $\Hom_{A}(\mathrm{U}_{A}(\mathbb{T}),C)$ is exact in $\DMod.$
So $_{A}X$ is a Gorenstein $(\mathcal{C}_{1},\mathcal{C}_{2})$-projective left $A$-module. This completes the proof.
\end{proof}

If the bimodule $_{A}N_{B}=0$, then  from Theorem \ref{Gorenstein projective module1} we have the following result.
\begin{corollary}
Let $T =\left(\begin{smallmatrix}  A & 0\\  {_{B}}M_{A} & B \\\end{smallmatrix}\right)$ be a triangular matrix ring with $M_{A}$ finitely generated projective.
\begin{enumerate}
\item If $_{A}X$ is a Gorenstein $(\mathcal{C}_{1},\mathcal{C}_{2})$-projective left $A$-module, then $(X,M\oo_{A}X,1,0)$ is a Gorenstein $(\mathfrak{B}^{\mathcal{C}_{1}}_{\mathcal{D}_{1}},
\mathfrak{J}_{{\mathcal{C}_{2}},{\mathcal{D}_{2}}})$-projective left $T$-module.

\item Assume that $_{B}M$ is projective, $M\oo_{A}C\in \mathcal{D}_{1}$ for any $C\in \mathcal{C}_{1}$.
If $(X,Y,f,0)$ is a Gorenstein $(\mathfrak{B}^{\mathcal{C}_{1}}_{\mathcal{D}_{1}},
\mathfrak{J}_{{\mathcal{C}_{2}},{\mathcal{D}_{2}}})$-projective left $T$-module, then $_{A}X$ is a Gorenstein $(\mathcal{C}_{1},\mathcal{C}_{2})$-projective left $A$-module.

\item Assume that $M\oo_{A}C\in \mathcal{D}_{1}$ for any $C\in \mathcal{C}_{1}.$
If $_{B}Y$ is a Gorenstein $(\mathcal{C}_{1},\mathcal{C}_{2})$-projective left $B$-module, then $(0,Y,0,0)$ is a Gorenstein $(\mathfrak{B}^{\mathcal{C}_{1}}_{\mathcal{D}_{1}},
\mathfrak{J}_{{\mathcal{C}_{2}},{\mathcal{D}_{2}}})$-projective left $T$-module.

\item Assume that $_{B}M$ is projective and $\mathrm{id}_{A}\Hom_{B}(M,D)<\infty$ for any $D\in \mathcal{D}_{1}$.
If $(X,Y,f,0)$ is a Gorenstein $(\mathfrak{B}^{\mathcal{C}_{1}}_{\mathcal{D}_{1}},
\mathfrak{J}_{{\mathcal{C}_{2}},{\mathcal{D}_{2}}})$-projective left $T$-module, then $_{B}Y$ is a Gorenstein $(\mathcal{D}_{1},\mathcal{D}_{2})$-projective left $B$-module.
\end{enumerate}
\end{corollary}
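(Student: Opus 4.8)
The plan is to obtain all four statements as special cases of Theorem \ref{Gorenstein projective module1}, by taking the bimodule $_{A}N_{B}$ to be zero, so that the Morita ring $\Delta$ collapses to the triangular matrix ring $T$. The first step is to record what happens when $N=0$: a left $T$-module is exactly a tuple $(X,Y,f,0)$ with $f\in\Hom_{B}(M\oo_{A}X,Y)$; the functors of Section \ref{section:Gor-projective} reduce to $\mathrm{T}_{A}(X)=(X,M\oo_{A}X,1,0)$ and $\mathrm{T}_{B}(Y)=(0,Y,0,0)$; and by Lemma \ref{lem1}(1) a projective left $T$-module is $\mathrm{T}_{A}(P)\oplus\mathrm{T}_{B}(Q)$ for $P$ projective over $A$ and $Q$ projective over $B$. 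Note also that $N_{B}=0$ is finitely generated projective, so, together with the assumption that $M_{A}$ is finitely generated projective, the standing hypotheses of Section \ref{section:Gor-projective} are in force for $T$, and $(\mathcal{C}_{1},\mathcal{C}_{2})$, $(\mathcal{D}_{1},\mathcal{D}_{2})$ remain complete duality pairs over $A$ and $B$.

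For (1) I would invoke Theorem \ref{Gorenstein projective module1}(1): its hypothesis that $N\oo_{B}D\in\mathcal{C}_{1}$ for every $D\in\mathcal{D}_{1}$ is automatic here, since $N\oo_{B}D=0$ and every class of modules is assumed to contain $0$; hence $\mathrm{T}_{A}(X)=(X,M\oo_{A}X,1,0)$ is Gorenstein $(\mathfrak{B}^{\mathcal{C}_{1}}_{\mathcal{D}_{1}},\mathfrak{J}_{{\mathcal{C}_{2}},{\mathcal{D}_{2}}})$-projective over $T$. For (3) I would invoke Theorem \ref{Gorenstein projective module1}(3) with its hypothesis $M\oo_{A}C\in\mathcal{D}_{1}$, which is precisely the one assumed in the corollary, and conclude that $\mathrm{T}_{B}(Y)=(0,Y,0,0)$ is the desired Gorenstein projective $T$-module.

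For (2) and (4) I would invoke Theorem \ref{Gorenstein projective module1}(2) and (4) respectively, after checking that their hypotheses degenerate to those stated here. When $N=0$ we have $_{A}N=0$ projective and $\Hom_{A}(N,C)=0$, so $\mathrm{id}_{B}\Hom_{A}(N,C)<\infty$ holds automatically; similarly in (4) one has $N\oo_{B}D=0\in\mathcal{C}_{1}$ automatically, while $_{B}M$ projective and $\mathrm{id}_{A}\Hom_{B}(M,D)<\infty$ are retained as hypotheses, exactly as in the corollary, and in (2) the remaining assumptions $_{B}M$ projective and $M\oo_{A}C\in\mathcal{D}_{1}$ are likewise those imposed. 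Since every left $T$-module has the form $(X,Y,f,0)$ anyway, Theorem \ref{Gorenstein projective module1}(2) and (4) then yield that $_{A}X$ is Gorenstein $(\mathcal{C}_{1},\mathcal{C}_{2})$-projective over $A$ and $_{B}Y$ is Gorenstein $(\mathcal{D}_{1},\mathcal{D}_{2})$-projective over $B$.

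I do not expect a genuine obstacle here: the entire content is that the hypotheses of Theorem \ref{Gorenstein projective module1} either trivialize (because $0$ belongs to every class) or survive verbatim when $_{A}N_{B}=0$. The only point that needs a line of care is the bookkeeping above — confirming that the category equivalence $\mathcal{M}(\Delta)\simeq\DMod$, the character-module formula of Lemma \ref{lem:character modules}, and the adjunctions of Lemma \ref{lem:adjoint pairs} all specialize correctly to $T$, which they do since they were established for an arbitrary Morita ring of the given type.
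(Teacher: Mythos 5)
Your proposal is correct and matches the paper's own treatment: the paper derives this corollary precisely by setting $_{A}N_{B}=0$ in Theorem \ref{Gorenstein projective module1}, and your verification that the extra hypotheses there either trivialize (since $N\oo_{B}D=0$, $\Hom_{A}(N,C)=0$, and $_{A}N=0$ is projective) or survive verbatim is exactly the bookkeeping the paper leaves implicit.
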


\begin{lemma}\label{flat module}
  A left $\Delta$-module $(X,Y,f,g)$ is flat if and only if $f$ and $g$ are monomorphisms, $X/\im(g)$ and $Y/\im(f)$ are flat.
\end{lemma}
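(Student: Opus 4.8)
The plan is to read off the statement from Lambek's criterion --- a module is flat if and only if its character module is injective --- together with Theorem \ref{thm1} and a description of the injective right $\Delta$-modules. Over any ring $R$, a left $R$-module $P$ is flat if and only if $P^{+}$ is an injective right $R$-module, so $(\mathrm{Flat}(A),\mathrm{Inj}(A^{op}))$ and $(\mathrm{Flat}(B),\mathrm{Inj}(B^{op}))$ are duality pairs, where $\mathrm{Flat}$ denotes the flat left modules and $\mathrm{Inj}$ the injective right modules (the latter classes are closed under summands and finite direct sums). By Theorem \ref{thm1}, $(1)\Rightarrow(2)$, applied to these pairs, $(\mathfrak{B}^{\mathrm{Flat}(A)}_{\mathrm{Flat}(B)},\mathfrak{J}_{\mathrm{Inj}(A^{op}),\mathrm{Inj}(B^{op})})$ is a duality pair of $\Delta$-modules; hence, for a left $\Delta$-module $(X,Y,f,g)$, the condition ``$f,g$ are monomorphisms and $X/\im(g),\,Y/\im(f)$ are flat'' --- i.e. $(X,Y,f,g)\in\mathfrak{B}^{\mathrm{Flat}(A)}_{\mathrm{Flat}(B)}$ --- is equivalent to $(X,Y,f,g)^{+}\cong(X^{+},Y^{+},g_{+},f_{+})\in\mathfrak{J}_{\mathrm{Inj}(A^{op}),\mathrm{Inj}(B^{op})}$ by Lemma \ref{lem:character modules}. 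Since, again by Lambek's criterion applied to $\Delta$, the module $(X,Y,f,g)$ is flat if and only if $(X,Y,f,g)^{+}$ is an injective right $\Delta$-module, the lemma reduces to the claim that \emph{a right $\Delta$-module $(W,V,h,t)$ is injective if and only if it lies in $\mathfrak{J}_{\mathrm{Inj}(A^{op}),\mathrm{Inj}(B^{op})}$}.

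To prove this claim, for the ``only if'' direction I would use that $\Delta^{+}=\mathrm{T}_{A}(A)^{+}\oplus\mathrm{T}_{B}(B)^{+}$ is an injective cogenerator of $\ModD$, and that $\Delta^{+}\in\mathfrak{J}_{\mathrm{Inj}(A^{op}),\mathrm{Inj}(B^{op})}$ by Lemma \ref{lem2}(2) and (2') (since $A^{+},B^{+}$ are injective); as $\mathfrak{J}_{\mathrm{Inj}(A^{op}),\mathrm{Inj}(B^{op})}$ is closed under direct products ($\Hom_{B}(N,-)$ and $\Hom_{A}(M,-)$ commute with products, and products of injectives are injective) and under direct summands, every injective right $\Delta$-module, being a summand of a product of copies of $\Delta^{+}$, lies in $\mathfrak{J}_{\mathrm{Inj}(A^{op}),\mathrm{Inj}(B^{op})}$. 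For the ``if'' direction, let $(W,V,h,t)\in\mathfrak{J}_{\mathrm{Inj}(A^{op}),\mathrm{Inj}(B^{op})}$, so $W_{0}:=\ker\widetilde{h}$ and $V_{0}:=\ker\widetilde{t}$ are injective and the exact sequences $0\to W_{0}\to W\xrightarrow{\widetilde{h}}\Hom_{B}(N,V)\to0$ and $0\to V_{0}\to V\xrightarrow{\widetilde{t}}\Hom_{A}(M,W)\to0$ split. Substituting one splitting into the other and using $\Hom_{B}(N,\Hom_{A}(M,-))\cong\Hom_{A}(N\otimes_{B}M,-)=0$ together with the symmetric identity $\Hom_{A}(N,\Hom_{B}(M,-))\cong\Hom_{B}(M\otimes_{A}N,-)=0$, one obtains $W\cong W_{0}\oplus\Hom_{B}(N,V_{0})$ and $V\cong V_{0}\oplus\Hom_{A}(M,W_{0})$, which identifies $(W,V,h,t)$ with a direct sum of two ``coinduced'' right $\Delta$-modules determined by $W_{0}$ and $V_{0}$; such modules are injective because the coinduction functors in question are right adjoint to the exact coordinate functors $\ModD\to\ModA$ and $\ModD\to\ModB$ (this is the right-module analogue of Lemma \ref{lem1}(2)).

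Combining everything: $(X,Y,f,g)$ is flat $\iff(X,Y,f,g)^{+}$ is injective $\iff(X,Y,f,g)^{+}\in\mathfrak{J}_{\mathrm{Inj}(A^{op}),\mathrm{Inj}(B^{op})}\iff(X,Y,f,g)\in\mathfrak{B}^{\mathrm{Flat}(A)}_{\mathrm{Flat}(B)}$, which is the assertion. The main obstacle is the ``if'' half of the emphasized claim: one must introduce the right-module analogues of the functors $\mathrm{H}_{A},\mathrm{H}_{B}$ and carry out the splitting computation while keeping careful track of the various left/right bimodule actions; once the structure of injective right $\Delta$-modules is available, everything else is a formal consequence of Theorem \ref{thm1}, Lemma \ref{lem2}, and Lambek's criterion. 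As an alternative, the ``only if'' direction of the lemma itself can be obtained directly from Lazard's theorem, since the class of $(X,Y,f,g)$ satisfying the stated condition contains ${}_{\Delta}\Delta=\mathrm{T}_{A}(A)\oplus\mathrm{T}_{B}(B)$ and is closed under finite direct sums and filtered colimits (filtered colimits are exact, commute with $M\otimes_{A}-$ and $N\otimes_{B}-$, and flat modules are closed under them), hence contains every flat $\Delta$-module.
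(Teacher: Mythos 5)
Your argument is correct in substance, but it takes a genuinely different route from the paper. The paper's proof is essentially a two-line reduction to the known description of flat modules over formal matrix rings, \cite[Theorem 3.6.5]{KT}: using $M\oo_{A}N=0=N\oo_{B}M$ one has $N\oo_{B}Y\cong N\oo_{B}(Y/\im(f))$ and $M\oo_{A}X\cong M\oo_{A}(X/\im(g))$, and the cited theorem (flatness of the two cokernels together with the canonical maps from them being isomorphisms onto $\im(f)$ and $\im(g)$) immediately translates into the stated monomorphism-plus-flat-cokernel condition. You instead re-derive the lemma from Lambek's criterion, the paper's own Theorem \ref{thm1} applied to the duality pairs $(\mathrm{Flat},\mathrm{Inj})$ over $A$ and $B$, and a structure theorem for injective right $\Delta$-modules; what this buys is a proof that is internal to the machinery of the paper (no appeal to \cite{KT} for flatness), at the cost of having to establish the right-module analogue of Lemma \ref{lem1}(2), which is precisely the content of your emphasized claim and is itself available as the right-module version of \cite[Corollary 3.4.7]{KT} -- citing it would collapse your proof to a few lines. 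The only step you leave genuinely loose is the ``if'' half of that claim: splitting the two sequences gives isomorphisms $W\cong W_{0}\oplus\Hom_{B}(N,V_{0})$ and $V\cong V_{0}\oplus\Hom_{A}(M,W_{0})$ only coordinatewise, and one must still check that $(W,V,h,t)$ decomposes as a direct sum of the two coinduced $\Delta$-modules, i.e.\ that the identification respects $h$ and $t$; this is repaired cleanly by using the adjunction between the coordinate functor and coinduction to produce a canonical $\Delta$-morphism from $(W,V,h,t)$ to the direct sum of the coinduced modules out of the two projections $W\to W_{0}$, $V\to V_{0}$, and then verifying (with $\Hom_{A}(N\oo_{B}M,-)=0=\Hom_{B}(M\oo_{A}N,-)$) that it is an isomorphism in each coordinate. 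Your ``only if'' directions are fine as stated (the cogenerator argument with $\Delta^{+}$, and the Lazard alternative, which of course covers only that one implication). Note also that, like the paper's proof, your argument uses none of the standing finiteness hypotheses of Section 4, only $M\oo_{A}N=0=N\oo_{B}M$.
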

\begin{proof}
Let $(X,Y,f,g)$ be a left $\Delta$-module.
Then we have the following exact sequences
$$M\oo_{A}X\xrightarrow{f}Y\ra Y/\im(f)\ra 0$$
of $B$-modules and
$$N\oo_{B}Y\xrightarrow{g}X\ra X/\im(g)\ra 0$$
of $A$-modules.
Since $M\oo_{A}N=0=N\oo_{B}M$, we have $N\oo_{B}Y\cong N\oo_{B}Y/\im(f)$ and
$M\oo_{B}X\cong M\oo_{B}X/\im(g).$

Assume that $(X,Y,f,g)$ is a flat left $\Delta$-module, it follows from \cite[Theorem 3.6.5]{KT} that $X/\im(g)$ and $Y/\im(f)$ are flat modules, $M\oo_{A}X/\im(g)\cong \im(f)$ and $N\oo_{B}Y/\im(f)\cong \im(g).$
Thus $M\oo_{B}X\cong \im(f)$ and $N\oo_{B}Y\cong \im(g).$
So $f$ and $g$ are monomorphism.

Conversely, if $f$ and $g$ are monomorphisms, $X/\im(g)$ and $Y/\im(f)$ are flat modules, then $M\oo_{B}X\cong \im(f)$ and $N\oo_{B}Y\cong \im(g).$
So $N\oo_{B}Y/\im(f)\cong \im(g)$ and $M\oo_{B}X/\im(g)\cong \im(f).$
Thus $(X,Y,f,g)$ is a flat left $\Delta$-module by \cite[Theorem 3.6.5]{KT}.
\end{proof}

\begin{lemma}\label{duality pair}
Let $(\mathcal{L},\mathcal{A})$ and $(\mathcal{L},\mathcal{A}')$ be complete duality pairs over a ring $R$.
Then $\mathcal{A}=\mathcal{A}'$.
\end{lemma}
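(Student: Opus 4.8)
The plan is to exploit the fact that the notion of a \emph{complete} duality pair, as recalled just before the statement, bundles in \emph{symmetry}, and that symmetry pins down the second component of the pair completely in terms of the first via character modules. Concretely, I would show that both $\mathcal{A}$ and $\mathcal{A}'$ coincide with the class of those right $R$-modules $Y$ for which $Y^{+}\in\mathcal{L}$, from which $\mathcal{A}=\mathcal{A}'$ is immediate.

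Here are the steps in the order I would carry them out. First, set up notation: write $(\mathcal{L},\mathcal{A})$ with $\mathcal{L}$ a class of left $R$-modules and $\mathcal{A}$ (hence also $\mathcal{A}'$) a class of right $R$-modules; the case where $\mathcal{L}$ consists of right modules is handled by the same argument with the roles of left and right interchanged. Second, since $(\mathcal{L},\mathcal{A})$ is a complete duality pair it is in particular symmetric, so the reversed pair $(\mathcal{A},\mathcal{L})$ is again a duality pair; likewise $(\mathcal{A}',\mathcal{L})$ is a duality pair. Third, apply condition~(1) of the definition of a duality pair to $(\mathcal{A},\mathcal{L})$: for \emph{every} right $R$-module $Y$ one has $Y\in\mathcal{A}$ if and only if $Y^{+}\in\mathcal{L}$. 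The same reasoning applied to $(\mathcal{A}',\mathcal{L})$ gives $Y\in\mathcal{A}'$ if and only if $Y^{+}\in\mathcal{L}$. Chaining the two equivalences yields $Y\in\mathcal{A}\Leftrightarrow Y\in\mathcal{A}'$ for all right $R$-modules $Y$, and since both classes are closed under isomorphisms we conclude $\mathcal{A}=\mathcal{A}'$.

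There is no deep obstacle here; the only thing requiring a bit of care is the left/right bookkeeping when the definition of a duality pair is invoked for the ``reversed'' pair $(\mathcal{A},\mathcal{L})$ — one must check that the two classes are admissible in the roles the definition assigns them (first class right modules, second class left modules), which is routine. I would also flag at the outset that symmetry is genuinely part of the recalled notion of a complete duality pair, so the hypothesis does deliver it; note moreover that the perfectness half of completeness (containing $R$, closure of $\mathcal{L}$ under direct sums and extensions) is not needed for this lemma — symmetry alone forces $\mathcal{A}=\mathcal{A}'$, whereas without symmetry one could not pass from $Y\in\mathcal{A}$ to $Y^{++}\in\mathcal{A}$, which is exactly what the argument above needs.
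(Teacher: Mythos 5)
Your proof is correct and follows essentially the same route as the paper: both arguments use the symmetry built into completeness to get $X\in\mathcal{A}\Leftrightarrow X^{+}\in\mathcal{L}\Leftrightarrow X\in\mathcal{A}'$, the paper merely phrasing this as two inclusions proved by contradiction. Your added remark that only symmetry (not perfectness) is needed is accurate.
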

\begin{proof}
Let $X\in \mathcal{A}$.
Then $X^{+}\in \mathcal{L}$. If $X$ is not in $\mathcal{A}'$, then $X^{+}$ is not in $\mathcal{L}$ as $(\mathcal{L},\mathcal{A}')$ is a complete duality pairs, which is a contradiction. So $X\in \mathcal{A}'$, that is, $\mathcal{A}\subseteq \mathcal{A}'$.
On the other hand, we can show that $\mathcal{A}'\subseteq \mathcal{A}$.
Hence $\mathcal{A}=\mathcal{A}'$.
\end{proof}

\begin{proposition}\label{prop:4.7}
 Let $\Delta$ be a right coherent ring with $N_{B}$ and $M_{A}$ finitely generated projective.
Then a right $\Delta$-module $(X,Y,f,g)$ is an $FP$-injective right $\Delta$-module if and only if $\tilde{g}:Y\ra \Hom_{A}(M,X)$ is a $B$-epimorphism, $\tilde{f}:X\ra \Hom_{B}(N,Y)$ is an $A$-epimorphism, $\ker \tilde{f}$ is an $FP$-injective right $B$-module and $\ker \tilde{g}$ is an $FP$-injective right $A$-module.
\end{proposition}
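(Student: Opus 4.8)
The plan is to recognize the asserted characterization as the identity $\mathrm{FP}\text{-}\mathrm{Inj}(\Delta^{op})=\mathfrak{J}_{\mathcal{C}_2,\mathcal{D}_2}$ for the classes coming from the flat--$\mathrm{FP}$-injective duality pairs over $A$ and $B$, and then to deduce that identity from the uniqueness Lemma \ref{duality pair}. Throughout, put $\mathcal{C}_1=\mathrm{Flat}(A)$, $\mathcal{C}_2=\mathrm{FP}\text{-}\mathrm{Inj}(A^{op})$, $\mathcal{D}_1=\mathrm{Flat}(B)$ and $\mathcal{D}_2=\mathrm{FP}\text{-}\mathrm{Inj}(B^{op})$.

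First I would record that $A$ and $B$ are right coherent; this follows from $\Delta$ being right coherent together with $N_B$ and $M_A$ being finitely generated projective (for instance, $e\Delta$, with $e=\left(\begin{smallmatrix}1&0\\0&0\end{smallmatrix}\right)$, is a finitely generated projective, hence coherent, right $\Delta$-module whose endomorphism ring is $A$, and symmetrically for $B$). Hence, by Example \ref{ex1} (that is, \cite[Theorem 1]{CS}), $(\mathcal{C}_1,\mathcal{C}_2)$ and $(\mathcal{D}_1,\mathcal{D}_2)$ are complete duality pairs over $A$ and $B$, and $(\mathrm{Flat}(\Delta),\mathrm{FP}\text{-}\mathrm{Inj}(\Delta^{op}))$ is a complete duality pair over $\Delta$. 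Since $N_B$ and $M_A$ are finitely generated projective, Corollary \ref{complete}(1) then shows that $(\mathfrak{B}^{\mathcal{C}_1}_{\mathcal{D}_1},\mathfrak{J}_{\mathcal{C}_2,\mathcal{D}_2})$ is a complete duality pair over $\Delta$.

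The decisive step is Lemma \ref{flat module}: a left $\Delta$-module $(X,Y,f,g)$ is flat exactly when $f,g$ are monomorphisms with $X/\im(g)$ and $Y/\im(f)$ flat, i.e.\ exactly when $(X,Y,f,g)\in\mathfrak{B}^{\mathcal{C}_1}_{\mathcal{D}_1}$; thus $\mathfrak{B}^{\mathcal{C}_1}_{\mathcal{D}_1}=\mathrm{Flat}(\Delta)$. Consequently $(\mathrm{Flat}(\Delta),\mathrm{FP}\text{-}\mathrm{Inj}(\Delta^{op}))$ and $(\mathrm{Flat}(\Delta),\mathfrak{J}_{\mathcal{C}_2,\mathcal{D}_2})$ are two complete duality pairs over $\Delta$ sharing the same left-hand class, so Lemma \ref{duality pair} forces $\mathrm{FP}\text{-}\mathrm{Inj}(\Delta^{op})=\mathfrak{J}_{\mathcal{C}_2,\mathcal{D}_2}$. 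Finally, unwinding the definition of $\mathfrak{J}_{\mathcal{C}_2,\mathcal{D}_2}$ for a right $\Delta$-module $(X,Y,f,g)$ reproduces precisely the conditions on $\tilde f$, $\tilde g$, $\ker\tilde f$ and $\ker\tilde g$ in the statement, which finishes the proof. The one ingredient that is not a purely formal consequence of the results of Sections 3 and 4 is the passage from right coherence of $\Delta$ to right coherence of $A$ and $B$; I expect this to be the main point to pin down, as it is exactly what makes Example \ref{ex1} available over $A$ and $B$ and hence what upgrades the duality pair of Corollary \ref{complete}(1) from perfect to complete, as required by Lemma \ref{duality pair}.
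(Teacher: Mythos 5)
Your proof is the same as the paper's: the paper deduces the proposition from exactly the chain you describe, namely right coherence of $A$ and $B$, Example \ref{ex1}, Corollary \ref{complete}, Lemma \ref{flat module} (which identifies $\mathfrak{B}^{\mathcal{C}_{1}}_{\mathcal{D}_{1}}$ with the class of flat left $\Delta$-modules) and the uniqueness Lemma \ref{duality pair}, followed by unwinding the definition of $\mathfrak{J}_{\mathcal{C}_{2},\mathcal{D}_{2}}$.

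The only divergence is the step you yourself single out. The paper obtains right coherence of $A$ and $B$ by citing \cite[Theorem 5.4]{YY}, whereas your parenthetical justification---``$e\Delta$ is a finitely generated projective right $\Delta$-module with endomorphism ring $A$, hence $A$ is right coherent''---is not a valid principle on its own: right coherence does not in general pass to endomorphism rings of finitely generated projective modules, i.e.\ to corner rings $e\Delta e$; Morita-type transfer requires the idempotent to be full. The hypotheses at hand do allow a direct argument, and it uses the finitely generated projectivity of $M_{A}$ and $N_{B}$ in an essential way: if $I$ is a finitely generated right ideal of $A=e\Delta e$, then $I\cdot e\Delta=I\oplus IN$ is a finitely generated $\Delta$-submodule of the finitely generated projective right $\Delta$-module $e\Delta$ (it is a submodule because $N\oo_{B}M=0$), hence finitely presented over $\Delta$ by right coherence; applying the exact functor $(-)e\cong -\oo_{\Delta}\Delta e$ to a finite presentation and using that $\Delta e\cong A\oplus M$ is a finitely generated projective right $A$-module exhibits $I=(I\cdot e\Delta)e$ as the cokernel of a map of finitely generated projective right $A$-modules, so $I$ is finitely presented over $A$; the symmetric argument with $1-e$ and $N_{B}$ handles $B$. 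With this step either argued as above or simply quoted from \cite{YY}, your proof coincides with the paper's.
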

\begin{proof}
Since $\Delta$ is a right coherent ring, it follows from \cite[Theorem 5.4]{YY} that $A$ and $B$ are right coherent rings.
Thus this assertion follows from Corollary \ref{complete}, Example \ref{ex1},
Lemmas \ref{flat module} and \ref{duality pair}.
\end{proof}
\begin{corollary}\label{ding-projective}
 Let $\Delta$ be a right coherent ring with $N_{B}$ and $M_{A}$ finitely generated projective.
\begin{enumerate}
\item  Assume that $N\oo_{B}F$ is a flat left $A$-module for any flat left $B$-module $F.$
If $X$ is a Ding projective left $A$-module, then $\mathrm{T}_{A}(X)$ is a Ding projective left $\Delta$-module.

\item Assume that $M\oo_{A}Z$ is a flat left $B$-module for any flat left $A$-module $Z.$
If $Y$ is a Ding projective left $B$-module, then $\mathrm{T}_{B}(Y)$ is a Ding projective left $\Delta$-module.

\item Assume that $_{A}N,{_{B}M}$ are projective and  $\mathrm{id}_{B}\Hom_{A}(N,Z)<\infty$ for any flat left $A$-module $Z$.
If $(X,Y,f,g)$ is a Ding projective left $\Delta$-module, then $X$ is a Ding projective left $A$-module.

\item Assume that $_{A}N,{_{B}M}$ are projective and $\mathrm{id}_{A}\Hom_{B}(M,F)<\infty$ for any flat left $A$-module $F$.
If $(X,Y,f,g)$ is a Ding projective left $\Delta$-module, then $Y$ is a Ding projective left $B$-module.
\end{enumerate}
\end{corollary}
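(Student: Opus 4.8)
The plan is to deduce all four statements from Theorem \ref{Gorenstein projective module1} by identifying Ding projective modules with Gorenstein $(\mathcal{L},\mathcal{A})$-projective modules for the right choice of complete duality pair, using Example \ref{ex1}. Concretely, since $\Delta$ is right coherent, by \cite[Theorem 5.4]{YY} the rings $A$ and $B$ are right coherent as well. Hence over each of $A$, $B$ and $\Delta$ the flat--FP-injective pair is a complete duality pair: set $(\mathcal{C}_1,\mathcal{C}_2)=(\mathrm{Flat}(A),\mathrm{FP\text{-}Inj}(A^{op}))$ and $(\mathcal{D}_1,\mathcal{D}_2)=(\mathrm{Flat}(B),\mathrm{FP\text{-}Inj}(B^{op}))$. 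By Corollary \ref{complete}, $(\mathfrak{B}^{\mathcal{C}_1}_{\mathcal{D}_1},\mathfrak{J}_{\mathcal{C}_2,\mathcal{D}_2})$ is a complete duality pair over $\Delta$. By Example \ref{ex1}, Gorenstein $(\mathcal{C}_1,\mathcal{C}_2)$-projective left $A$-modules are precisely the Ding projective left $A$-modules, and likewise over $B$; and Gorenstein $(\mathrm{Flat}(\Delta),\mathrm{FP\text{-}Inj}(\Delta^{op}))$-projective left $\Delta$-modules are precisely the Ding projective left $\Delta$-modules.

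The key linking step is to show that the complete duality pair $(\mathfrak{B}^{\mathcal{C}_1}_{\mathcal{D}_1},\mathfrak{J}_{\mathcal{C}_2,\mathcal{D}_2})$ built from the flat classes over $A$ and $B$ coincides, on the nose, with the flat--FP-injective duality pair over $\Delta$. For the left-hand class this is exactly Lemma \ref{flat module}: a left $\Delta$-module $(X,Y,f,g)$ is flat iff $f,g$ are monic with $X/\im(g)$ and $Y/\im(f)$ flat, i.e.\ iff $(X,Y,f,g)\in\mathfrak{B}^{\mathcal{C}_1}_{\mathcal{D}_1}$. For the right-hand class, Proposition \ref{prop:4.7} identifies $\mathfrak{J}_{\mathcal{C}_2,\mathcal{D}_2}$ with the FP-injective right $\Delta$-modules. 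Thus $(\mathfrak{B}^{\mathcal{C}_1}_{\mathcal{D}_1},\mathfrak{J}_{\mathcal{C}_2,\mathcal{D}_2})=(\mathrm{Flat}(\Delta),\mathrm{FP\text{-}Inj}(\Delta^{op}))$ as pairs of classes; alternatively, since both are complete duality pairs with the same left-hand class $\mathrm{Flat}(\Delta)$, Lemma \ref{duality pair} forces the right-hand classes to agree. So a Gorenstein $(\mathfrak{B}^{\mathcal{C}_1}_{\mathcal{D}_1},\mathfrak{J}_{\mathcal{C}_2,\mathcal{D}_2})$-projective $\Delta$-module is exactly a Ding projective $\Delta$-module.

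With these identifications in place, each clause is a direct translation of the corresponding clause of Theorem \ref{Gorenstein projective module1}. For (1): the hypothesis ``$N\oo_B F$ flat for any flat $B$-module $F$'' is precisely ``$N\oo_B D\in\mathcal{C}_1$ for any $D\in\mathcal{D}_1$'', so Theorem \ref{Gorenstein projective module1}(1) gives that $\mathrm{T}_A(X)$ is Gorenstein $(\mathfrak{B}^{\mathcal{C}_1}_{\mathcal{D}_1},\mathfrak{J}_{\mathcal{C}_2,\mathcal{D}_2})$-projective, hence Ding projective over $\Delta$. Clause (2) is the symmetric statement using Theorem \ref{Gorenstein projective module1}(3) (with the roles of $\mathcal{C}$ and $\mathcal{D}$ and of $M$ and $N$ interchanged, noting that the flat classes over $A$ and $B$ are of the same ``type'' so the additional running hypothesis of Section 4 that $(\mathcal{C}_1,\mathcal{C}_2)$ and $(\mathcal{D}_1,\mathcal{D}_2)$ are complete duality pairs is met). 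Clauses (3) and (4) follow from Theorem \ref{Gorenstein projective module1}(2) and (4) respectively: the extra hypotheses ``$_AN,{}_BM$ projective'' and the finiteness of $\mathrm{id}_B\Hom_A(N,Z)$ (resp.\ $\mathrm{id}_A\Hom_B(M,F)$) for flat $Z$ (resp.\ $F$) are exactly the hypotheses of those clauses once $\mathcal{C}_1=\mathrm{Flat}(A)$ and $\mathcal{D}_1=\mathrm{Flat}(B)$, and one uses that $M\oo_A Z$ is automatically flat when $_BM$ is projective (resp.\ $N\oo_B F$ flat when $_AN$ is projective), which handles the ``$M\oo_A C\in\mathcal{D}_1$'' / ``$N\oo_B D\in\mathcal{C}_1$'' requirements there.

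The main obstacle is the bookkeeping in the previous paragraph: making sure that when we invoke the parts of Theorem \ref{Gorenstein projective module1} that go from $\Delta$-modules down to $A$- or $B$-modules, all of that theorem's standing hypotheses (in particular that $M\oo_A C\in\mathcal{D}_1$ for all $C\in\mathcal{C}_1$, and symmetrically) really are implied by the coherence of $\Delta$ together with the projectivity of $_AN$ and $_BM$; the rest is the routine verification, already packaged in Lemma \ref{flat module} and Proposition \ref{prop:4.7}, that the constructed duality pair over $\Delta$ is the flat--FP-injective one.
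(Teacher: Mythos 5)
Your proposal is correct and follows essentially the same route as the paper: since $\Delta$ (hence $A$ and $B$) is right coherent, the flat--FP-injective pairs are complete duality pairs, Lemma \ref{flat module} and Proposition \ref{prop:4.7} identify $(\mathfrak{B}^{\mathcal{C}_{1}}_{\mathcal{D}_{1}},\mathfrak{J}_{{\mathcal{C}_{2}},{\mathcal{D}_{2}}})$ with the flat--FP-injective pair over $\Delta$, and the four clauses then follow from Theorem \ref{Gorenstein projective module1} together with Example \ref{ex1}. Your extra observation that projectivity of $_{B}M$ (resp.\ $_{A}N$) forces $M\oo_{A}C$ (resp.\ $N\oo_{B}D$) to be flat, which the paper leaves implicit when matching the hypotheses of Theorem \ref{Gorenstein projective module1}(2) and (4), is correct and fills that small step.
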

\begin{proof}
The corollary follows from Theorem \ref{Gorenstein projective module1},
Example \ref{ex1}, Lemma \ref{flat module} and Proposition \ref{prop:4.7}.
\end{proof}

\bigskip

\noindent\textbf{Yajun Ma}\\
Department of Mathematics, Nanjing University, Nanjing 210093, China.\\
E-mails: \textsf{13919042158@163.com}\\[1mm]
\textbf{Jiafeng L${\rm \ddot{u}}$}\\
 Department of Mathematics, Zhejiang Normal University,
 Jinhua 321004, China.\\
 jiafenglv@zjnu.edu.cn\\[1mm]
\textbf{Huanhuan Li}\\
School of Mathematics and Statistics, Xidian University,
 Xi'an 710071, China.\\
E-mail: \textsf{lihh@xidian.edu.cn}\\[1mm]
\textbf{Jiangsheng Hu}\\
School of Mathematics and Physics, Jiangsu University of Technology,
 Changzhou 213001, China.\\
E-mail: \textsf{jiangshenghu@hotmail.com}\\[1mm]
\end{document}